\theoremstyle{plain}
\newtheorem*{thm*}{Theorem}
\newtheorem{thm}{Theorem}[section]
\newtheorem{prop}[thm]{Proposition}
\newtheorem{cor}[thm]{Corollary}
\newtheorem{lem}[thm]{Lemma}
\theoremstyle{definition}
\newtheorem{de}[thm]{Definition}
\theoremstyle{example}
\newtheorem{ex}[thm]{Example}
\newtheorem{exs}[thm]{Examples}
\newtheorem{rem}[thm]{Remark}
\renewcommand{\phi}{\varphi}
\renewcommand{\rho}{\varrho}
\renewcommand{\epsilon}{\varepsilon}
\title{A brick version of a theorem of Auslander}
\author{Francesco Sentieri}
\date{}
\begin{document}

\begin{abstract}
We prove that a finite dimensional algebra $ \Lambda $ is $ \tau-$tilting finite if and only if all the bricks over $ \Lambda $ are finitely generated.

 This is obtained as a consequence of the existence of proper locally maximal torsion classes for $ \tau-$tilting infinite algebras.
\end{abstract}

\maketitle

\section*{Introduction}

The concept of $ \tau-$tilting module was introduced by Adachi, Iyama and Reiten \cite{tau-orig} to complete, with respect to mutation, the classical theory of tilting modules. 

In their foundational work they established a bijection between functorially finite torsion classes and support $ \tau-$tilting modules. 

Demonet, Iyama and Jasso introduced in \cite{g-vectors} the concept of a $ \tau-$tilting finite algebra: finite dimensional algebras which admit a finite number of basic $ \tau-$tilting modules up to isomorphism.

In the cited work they obtained a connection between bricks, i.e. modules whose endomorphism ring is a division ring, torsion classes and $ \tau-$tilting modules. 

In particular, they proved that a finite dimensional algebra is $ \tau-$tilting finite if and only if it has a finite number of finitely generated bricks up to isomorphism.

This result suggested a strong analogy between representation finite algebras and $ \tau-$tilting finite algebras, leading to new versions of classical results, where the role of indecomposable modules in the representation finite case is now played by bricks.

As an example, in \cite{BTconj} the authors prove the analogue of the first Brauer-Thrall conjecture in this brick setting. 

Following this line, the aim of this paper is to prove a brick version of the following classical result of Auslander:

\begin{thm*}[\cite{AusThm}]
An artin algebra $ \Lambda $ is representation finite if and only if every indecomposable module is finitely generated.
\end{thm*}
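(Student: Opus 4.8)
The plan is to prove the two implications separately, using the theory of pure-injective modules and the quasi-compactness of the Ziegler spectrum $\mathrm{Zg}\,\Lambda$ for one direction, and Auslander's analysis of representation-finite algebras via the functor category for the other. Throughout I would use that over an artin algebra a module is finitely generated if and only if it has finite length, that finite-length indecomposables are pure-injective, and that these finite-length indecomposables are exactly the isolated points of $\mathrm{Zg}\,\Lambda$.

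For the implication ``every indecomposable module is finitely generated $\Rightarrow$ $\Lambda$ representation finite'' I would argue by contraposition. Suppose $\Lambda$ is representation infinite, so there are infinitely many pairwise non-isomorphic indecomposable finitely generated modules; each is an isolated point of $\mathrm{Zg}\,\Lambda$, so this space has infinitely many isolated points and in particular is infinite. Since $\mathrm{Zg}\,\Lambda$ is quasi-compact and an infinite discrete space is never quasi-compact, not every point can be isolated, so $\mathrm{Zg}\,\Lambda$ possesses a non-isolated point. Such a point is an indecomposable pure-injective module of infinite length, hence an infinitely generated indecomposable module, contradicting the hypothesis. The only real input here is the quasi-compactness of the Ziegler spectrum, which is standard.

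The substantial direction is ``$\Lambda$ representation finite $\Rightarrow$ every indecomposable module is finitely generated''. Here the strategy is to establish the stronger statement that $\Lambda$ is \emph{pure-semisimple}, i.e. that every $\Lambda$-module is a direct sum of finitely generated indecomposable modules; an indecomposable module then coincides, by the Krull--Remak--Schmidt--Azumaya uniqueness of such decompositions, with one of its finitely generated summands and is therefore finitely generated. To obtain pure-semisimplicity I would let $M$ be the additive generator of $\mathrm{mod}\,\Lambda$ and $\Gamma=\mathrm{End}_\Lambda(M)$ its Auslander algebra, and exploit the equivalence between the category $\mathrm{fp}(\mathrm{mod}\,\Lambda,\mathrm{Ab})$ of finitely presented functors and $\mathrm{mod}\,\Gamma$. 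Since $\Gamma$ is again an artin algebra, $\mathrm{mod}\,\Gamma$ is a length category, so every finitely presented functor has finite length; this finiteness of the functor category is precisely the condition forcing $\Lambda$ to be pure-semisimple, and the decomposition of an arbitrary module is read off from the representable functor $\mathrm{Hom}_\Lambda(M,-)$ together with the finitely many indecomposables in $\mathrm{add}\,M=\mathrm{mod}\,\Lambda$.

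I expect the forward direction to be the main obstacle. Quasi-compactness settles the converse almost formally, but transferring the finiteness of the functor category into a genuine direct-sum decomposition of an arbitrary, possibly infinitely generated, module is the technical heart: one must control such modules through the finitely presented functor category and then carry the length and splitting information back to $\mathrm{Mod}\,\Lambda$. This is exactly the content of the Auslander (equivalently Ringel--Tachikawa) pure-semisimplicity theorem, and any complete proof must reproduce or invoke that analysis.
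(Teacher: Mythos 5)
The first thing to say is that the paper contains no proof of this statement: it is Auslander's classical theorem, quoted from \cite{AusThm} purely as motivation, and the paper's actual work is the brick analogue, proved by quite different means (definable torsion pairs, torsionfree almost torsion modules, locally maximal torsion classes). So your proposal has to be measured against the classical literature rather than against an argument in the paper, and measured that way it is the standard modern outline and essentially sound. Your contrapositive direction via the Ziegler spectrum is correct and complete modulo two standard inputs (both in \cite{purity}): quasi-compactness of $\mathrm{Zg}\,\Lambda$, and the fact that over an artin algebra every finite-length indecomposable is pure-injective and isolated --- this second fact is where the real Auslander--Reiten-theoretic content hides, since the isolation is achieved by the almost split sequence ending at the module. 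One historical correction: you call this half ``almost formal'' and the other half ``the substantial direction'', but classically it is the opposite --- the existence of a large indecomposable over a representation-infinite algebra is precisely the content of Auslander's cited paper, and the Ziegler-spectrum compactness argument is a genuinely modern shortcut for it.

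For the direction ``representation finite $\Rightarrow$ every indecomposable is finitely generated'' your functor-category sketch identifies the right classical route but, as you concede yourself, it stops exactly where the work starts: the finiteness of $\mathrm{fp}(\Lambda-\mathrm{mod},\mathrm{Ab})\simeq\Gamma-\mathrm{mod}$ concerns functors on \emph{finitely presented} modules and says nothing directly about arbitrary large modules, and invoking the Ringel--Tachikawa/Auslander pure-semisimplicity theorem at that point is invoking essentially the statement being proved. The missing bridge can be supplied in a few lines without functor categories: let $M$ be the additive generator of $\Lambda-\mathrm{mod}$; then $M$ has finite length over $\mathrm{End}(M)$, i.e.\ is endofinite, hence $\Sigma$-pure-injective; every module $N$ is a direct limit of modules in $\mathrm{add}\,M$, so the canonical map $M^{(\kappa)}\to N$ is a pure epimorphism whose kernel, being a pure submodule of a $\Sigma$-pure-injective module, splits off; thus $N$ is a direct summand of $M^{(\kappa)}$, and Azumaya's theorem decomposes $N$ into finitely generated indecomposables. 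With that paragraph added, your proof is complete. It is also worth noticing that the paper's proof of its brick analogue is structurally your compactness argument transposed to the lattice $\mathbf{tors}(\Lambda)$: Corollary \ref{cor:locMaxExists} manufactures a locally maximal, non functorially finite torsion class by a co-compactness/chain argument (functorially finite classes playing the role of isolated points), and Corollary \ref{cor:TfExist} then produces an infinite-dimensional torsionfree, almost torsion brick playing exactly the role of your non-isolated point; in the paper's Kronecker example this brick is Ringel's generic module, which is literally a non-isolated point of the Ziegler spectrum, so the parallel is not merely formal.
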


To obtain this, we will exploit the concept of torsionfree, almost torsion module, introduced  by  Herzog in \cite{slides} and recently rediscovered by Barnard-Carrol-Zhu \cite{minimIncl} in the finite dimensional setting. 

These modules play an important role within the study of the lattice structure of the set of torsion classes in the category of finitely generated modules over a finite dimensional algebra. 
In \cite{lattices} we find a detailed study of this lattice theoretical properties.

We start in section 1 with a quick reminder about $ \tau-$tilting theory, silting theory and definable classes. We also recall a bijection, essentially due to Crawley-Boevey, between torsionfree classes in the category of finitely generated modules over a noetherian ring and definable torsionfree classes in the large module category.

Section 2 contains a discussion of the concept of torsionfree, almost torsion module, with the new result that torsionfree, almost torsion modules determine definable torsionfree classes.

In Section 3 we observe that, for a non $ \tau-$tilting finite algebra, we can always find a non functorially finite torsion class which is locally maximal  (i.e. without any element covering it in the lattice of torsion classes). 

We use this last observation to give the proof of the following brick version of the theorem of Auslander:

\begin{thm*}
A finite dimensional algebra $ \Lambda $ is $ \tau-$tilting finite if and only if every brick over $ \Lambda $ is finitely generated.
\end{thm*}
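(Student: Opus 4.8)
The plan is to prove the two implications separately, routing both through the Crawley--Boevey correspondence between finitely generated torsionfree classes and definable torsionfree classes and through the notion of a torsionfree, almost torsion module, which I abbreviate \emph{tf-at}. Recall that a tf-at module for a torsion pair $(\mathcal T,\mathcal F)$ lies in $\mathcal F$, has all its proper quotients in $\mathcal T$ (together with the dual minimality on extensions), and is in particular a brick; conversely a brick $B$ should be tf-at for the torsion pair cogenerated by $B$ in $\operatorname{Mod}\Lambda$, for instance $({}^{\perp_0}B,\,({}^{\perp_0}B)^{\perp})$ with ${}^{\perp_0}B=\{X:\operatorname{Hom}(X,B)=0\}$, the quotient condition following at once from $\operatorname{End}(B)$ being a division ring. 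The decisive bridge to the large category is the new result of Section~2: the torsionfree class for which a tf-at module is almost torsion is definable, so the Crawley--Boevey bijection applies to it.

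For the implication \emph{every brick is finitely generated $\Rightarrow$ $\tau$-tilting finite}, I argue by contraposition using Section~3. If $\Lambda$ is not $\tau$-tilting finite, choose a torsion class $\mathcal T\subsetneq\operatorname{mod}\Lambda$ that is locally maximal and not functorially finite (properness being automatic, since $\operatorname{mod}\Lambda$ itself is functorially finite), with torsionfree class $\mathcal F=\mathcal T^{\perp}$, and let $(\vec{\mathcal T},\vec{\mathcal F})$ be the induced torsion pair in $\operatorname{Mod}\Lambda$ with $\vec{\mathcal F}=\varinjlim\mathcal F$ definable. The key step is to produce a tf-at module $B$ for $(\vec{\mathcal T},\vec{\mathcal F})$: this is exactly where the failure of functorial finiteness is spent, as it is what forces the existence of a minimal definable torsion class strictly above $\vec{\mathcal T}$. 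Granting such a $B$, it is automatically a brick, and it cannot be finitely generated: a finitely generated tf-at module for $(\vec{\mathcal T},\vec{\mathcal F})$ would restrict to a finitely generated tf-at module for $(\mathcal T,\mathcal F)$, hence would label a cover of $\mathcal T$ in $\operatorname{tors}(\operatorname{mod}\Lambda)$, contradicting local maximality. Thus $B$ is an infinitely generated brick.

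For the converse, \emph{$\tau$-tilting finite $\Rightarrow$ every brick is finitely generated}, let $B\in\operatorname{Mod}\Lambda$ be a brick and let $(\mathcal T_B,\mathcal F_B)$ be the torsion pair in $\operatorname{Mod}\Lambda$ for which $B$ is tf-at. By Section~2 the class $\mathcal F_B$ is definable, so by Crawley--Boevey it equals $\varinjlim(\mathcal F_B\cap\operatorname{mod}\Lambda)$ with $\mathcal F_B\cap\operatorname{mod}\Lambda$ a finitely generated torsionfree class. Since $\Lambda$ is $\tau$-tilting finite this finitely generated class is functorially finite, so the corresponding cover in $\operatorname{tors}(\operatorname{mod}\Lambda)$ is labelled by a finitely generated brick; I then want to identify that finitely generated label with $B$ itself, using the uniqueness of the tf-at module attached to the definable class $\mathcal F_B$, which is again the content of Section~2. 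This forces $B$ to be finitely generated.

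The main obstacle is the existence step in the first implication: converting the combination ``locally maximal but not functorially finite'' into an honest infinitely generated tf-at module for the large torsion pair. Concretely I expect to have to show that a definable torsionfree class arising from a non--functorially finite $\mathcal F$ is properly contained in a next definable torsionfree class, and that this minimal inclusion is witnessed by a \emph{single} tf-at module rather than only asymptotically in the direct limit. Controlling this covering relation inside the lattice of definable torsionfree classes, and ruling out that passage to $\operatorname{Mod}\Lambda$ collapses it, is the technical heart on which both directions ultimately rest.
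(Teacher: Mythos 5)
Your skeleton coincides with the paper's in both directions (locally maximal non-functorially-finite class $\to$ definable extension $\to$ torsionfree, almost torsion module that cannot be finitely generated; and brick $\to$ almost torsion/torsionfree module for the pair it generates or cogenerates $\to$ finiteness from $\tau$-tilting finiteness), but there are two genuine gaps, one in each direction. The first is the existence step you yourself flag as the ``main obstacle'': producing a torsionfree, almost torsion module for the definable extension $(\mathcal{T},\mathcal{F})$. Your proposed route --- a minimal definable torsion class strictly above $\mathcal{T}$, i.e.\ a covering relation in the lattice of definable classes --- is neither developed nor needed, and your framing that this is ``where the failure of functorial finiteness is spent'' is a misattribution: existence holds for \emph{every} nonzero definable torsionfree class, functorially finite or not. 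The paper proves it (Corollary \ref{cor:TfExist}) via the HRS-tilt: by Theorem \ref{thm:GrotHeart} the heart at $(\mathcal{T},\mathcal{F})$ is Grothendieck because $\mathcal{F}$ is definable; by Lemma \ref{lem:finInHeart} any nonzero finite dimensional module in $\mathcal{F}$ is a finitely generated object of the heart, hence has a simple quotient, which stays in $\mathcal{F}$ since $\mathcal{F}$ is a torsion class in the heart; and by Theorem \ref{thm:simpHeart} simple objects of the heart lying in $\mathcal{F}$ are exactly the torsionfree, almost torsion modules. Local maximality is spent only afterwards, exactly as you use it: a finitely generated such module would be minimal extending (Proposition \ref{prop:finInf}) and hence label a cover (Theorem \ref{thm:minExt}), a contradiction. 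That part of your argument is correct.

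The second gap is in the direction ``$\tau$-tilting finite $\Rightarrow$ bricks finitely generated''. You assert that the torsionfree class $\mathcal{F}_B$ cogenerated by the brick $B$ is definable ``by Section 2'', but no result of the paper says this: Proposition \ref{prop:betaInj} says that a \emph{definable} torsionfree class is determined by its torsionfree, almost torsion modules, not that a torsion pair admitting such a module has definable torsionfree class, and for an arbitrary, possibly infinite dimensional, brick there is no a priori reason for definability of $({}^{\perp_0}B)^{\perp_0}$. Moreover, the ``uniqueness of the tf-at module attached to $\mathcal{F}_B$'' that you invoke to identify $B$ with the finitely generated label of a cover is not available: the torsionfree, almost torsion modules for a pair form a semibrick, in general with many elements. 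The paper's route avoids both problems and needs no covers at all: $B$ is torsion, almost torsionfree for the pair $(\mathbf{T}(B), B^{\perp_0})$ it \emph{generates} (Lemma \ref{lem:tTFinGen}; your cogenerated variant works dually), this pair restricts to a torsion pair in $\Lambda$-mod which is functorially finite by hypothesis, and by Proposition \ref{prop:ffUniqueExt} a functorially finite pair has a \emph{unique} extension to $\Lambda$-Mod, definable on both sides --- so definability is obtained a posteriori from functorial finiteness, not from Section 2. Then Lemma \ref{lem:finTfT} (equivalently Corollary \ref{cor:finDimTtF-TfT}) says that for such a pair all torsion, almost torsionfree and torsionfree, almost torsion modules are finite dimensional, and $B$ is one of them. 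To repair your version you would have to replace the definability claim and the uniqueness argument by exactly this chain.
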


\subsection*{Notation} 
$ R $ will denote an arbitrary ring ( with unit ), $ \Lambda $ a finite dimensional algebra over some field $ k $.

 We denote by $ R-$Mod ( resp. $R-$mod)  the category of ( finitely presented) left $ R $ modules. 

For a subclass $ \mathcal{ C } \subseteq R-$Mod, the symbol $ \mathbf{T}(\mathcal{ C }) $, resp. $ \mathbf{F}(\mathcal{ C }) $, denotes the smallest torsion, resp. torsionfree, class in $ R-$Mod containing $ \mathcal{ C } $. 

If $ \mathcal{C} = \{M\} $, we write simply $ \mathbf{T}(M) $ and $ \mathbf{F}(M) $.

We use $ \widetilde{\mathbf{T}} $, resp. $ \widetilde{\mathbf{F}} $, in the same way when referring to torsion pairs in $ R-$mod.

$ \mathbf{ Tors }(R) $, resp. $ \mathbf{ tors }(R) $, is the class of torsion classes in $ R-$Mod, resp. $ R-$mod. 

For a class $ \mathcal{S} \subseteq R-\mathrm{Mod} $, $ \mathrm{Gen}(\mathcal{S}) $ is the subcategory of modules isomorphic to quotients of arbitrary (set-indexed) direct sums of elements in $ \mathcal{S} $. 

Let $ I \subset \mathbb{N} $, denote by $ M^{\perp_I} $  the class of modules $ N $ with $ \operatorname{Ext}^i(M,N) = 0 $, for all $ i \in I $.
\section{Preliminaries}

\subsection{$\tau-$tilting finite algebras}

We recall in this section the definition of a $ \tau-$tilting module by Adachi, Iyama and Reiten \cite{tau-orig} and the related concept of $ \tau-$tilting finiteness, together with some characterizations given in \cite{g-vectors}, \cite{tau-tilt}. 

\begin{de}
Let $ \Lambda $ be a finite dimensional algebra over a field $ k $. Denote by $ \tau $ the Auslader-Reiten translation.

$ M \in \Lambda-\mathrm{mod} $ is $ \tau-$tilting if:
\begin{itemize}
\item[($\tau$1)] $ \mathrm{Hom}(M, \tau M) = 0 $
\item[($\tau$2)] The number of non-isomorphic indecomposable summands of $ M $ is equal to the number of isoclasses of simple $ \Lambda-$modules.
\end{itemize}
Given two $ \tau-$tilting modules $ M_1 $ and $ M_2 $, we say that $ M_1 $ is equivalent to $ M_2 $ if their additive closures in $ \Lambda-\mathrm{mod} $ coincide, $ \mathrm{add}(M_1) = \mathrm{add}(M_2)$.
\end{de}

A module $ M $ is support $\tau-$tilting if there exists some idempotent $ e \in \Lambda $, such that $ M $ is a $\Lambda/(e)-\tau-$tilting module.

\begin{de}
A finite dimensional algebra $ \Lambda $ is $ \tau-$tilting finite if there are only finitely many basic $ \tau-$tilting $ \Lambda-$modules up to isomorphism.
\end{de}

\begin{thm}[{\cite{g-vectors}}]
The following conditions are equivalent for a finite dimensional algebra $ \Lambda $:
\begin{itemize}
\item[(TF1)] $ \Lambda $ is $\tau-$tilting finite
\item[(TF2)] $ \mathbf{tors}(\Lambda) $ is finite.
\item[(TF3)] All torsion classes in $ \Lambda-\mathrm{mod} $ are functorially finite.
\item[(TF4)] There are only finitely many isoclasses of bricks in $\Lambda-\mathrm{mod} $.
\end{itemize}
\end{thm}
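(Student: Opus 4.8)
The engine of the equivalence is the Adachi--Iyama--Reiten bijection recalled in the introduction, which matches functorially finite torsion classes of $\Lambda-\mathrm{mod}$ with basic support $\tau$-tilting modules through $M\mapsto\operatorname{Gen}(M)$ and $\mathcal T\mapsto P(\mathcal T)$, the direct sum of the indecomposable Ext-projectives of $\mathcal T$. Under this bijection (TF1) is exactly the statement that $\Lambda$ admits only finitely many functorially finite torsion classes, so one implication is free: (TF2)$\Rightarrow$(TF1), since if the whole lattice $\mathbf{tors}(\Lambda)$ is finite then a fortiori its functorially finite members are finite in number. The plan is to close the cycle through the master dichotomy \emph{finitely many functorially finite torsion classes $\iff$ finitely many torsion classes $\iff$ every torsion class is functorially finite}, whose nontrivial content is that the mere finiteness of the functorially finite part already controls all of $\mathbf{tors}(\Lambda)$.

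I would prove this dichotomy through two opposite passages between the functorially finite world and its completion. First, the Hasse quiver of functorially finite torsion classes is locally finite and connected: by the mutation theory of support $\tau$-tilting modules each such $\mathcal T$ has at most $n$ lower and at most $n$ upper covers among functorially finite torsion classes ($n$ the number of simple $\Lambda$-modules), with unique source $\operatorname{Gen}(\Lambda)=\Lambda-\mathrm{mod}$ and unique sink $0$. If there were infinitely many functorially finite torsion classes, a K\"onig-type argument on this locally finite connected quiver yields an infinite strictly descending chain $\mathcal T_0\supsetneq\mathcal T_1\supsetneq\cdots$; since torsion classes are closed under arbitrary intersection, $\mathcal T_\infty=\bigcap_i\mathcal T_i$ is again a torsion class, and I would argue that it has no Ext-projective generator and is therefore \emph{not} functorially finite. \textbf{This is the principal obstacle:} one must show that the infimum of a genuinely infinite descending chain of functorially finite torsion classes loses functorial finiteness, i.e. fails to admit a minimal right approximation. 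This is precisely the phenomenon that the present paper captures via proper locally maximal torsion classes, and it is where the torsionfree, almost torsion modules of Section~2 do the work; conversely, a non-functorially-finite torsion class is itself flanked by infinitely many functorially finite ones, so that $\neg$(TF3) returns $\neg$(TF1). Together with the trivial (TF2)$\Rightarrow$(TF1) this settles (TF1)$\iff$(TF2)$\iff$(TF3).

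Finally I would incorporate (TF4) through the brick labelling of the Hasse quiver of $\mathbf{tors}(\Lambda)$ (Barnard--Carroll--Zhu, \cite{minimIncl}): every cover $\mathcal U\subsetneq\mathcal T$ carries a canonical brick label, characterised as the torsionfree, almost torsion object of the torsion pair whose torsion class is $\mathcal U$, and this assignment is surjective onto the isoclasses of bricks. Granting surjectivity, (TF2)$\Rightarrow$(TF4) is immediate, a finite lattice having only finitely many covers and hence only finitely many labels. For the reverse I would once more argue from $\tau$-tilting infiniteness: along the infinite descending chain above the successive labels must be pairwise non-isomorphic, producing infinitely many bricks and violating (TF4). \textbf{The delicate point here} is exactly this distinctness, since one brick may in principle label several arrows; one therefore needs control on the fibres of the labelling map --- equivalently, that a fixed brick is the label of only finitely many covers --- which I would again extract from the almost torsion analysis that governs the limit torsion class. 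This makes (TF4)$\Rightarrow$(TF1) the subtlest implication and completes the cycle.
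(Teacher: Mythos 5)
The paper does not prove this theorem at all --- it is quoted from \cite{g-vectors} --- so your proposal can only be measured against the Demonet--Iyama--Jasso argument, whose broad architecture (mutation of support $\tau$-tilting modules, compactness in the lattice, brick labels of Hasse arrows) you do follow. The genuine gap is your K\"onig-type step. K\"onig's lemma applied to the locally finite component of $\Lambda-\mathrm{mod}$ in the exchange quiver produces an infinite \emph{path}, but paths in a Hasse quiver are not monotone: they may alternate up and down, and nothing in local finiteness forces an infinite strictly descending chain, so you never legitimately arrive at the limit class $\mathcal{T}_\infty$ on which your whole plan rests. (Connectedness of the support $\tau$-tilting quiver is also not free; what is actually available is that a finite connected component forces the quiver to be that component.) The correct engine, used in Section 3 of this paper, is \cite[Theorem 3.1]{g-vectors}: given a functorially finite class properly containing a fixed target class, mutation produces a cover step that stays above the target, and iterating this (as in the paper's interval lemma, together with Zorn) yields the desired infinite strictly descending chain of functorially finite classes whenever $\Lambda$ is $\tau$-tilting infinite.

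Your two self-declared obstacles are both closable, and you should know that the second is far easier than you fear. The first --- that the meet of an infinite strictly descending chain of functorially finite classes is not functorially finite --- is exactly co-compactness of functorially finite torsion classes, \cite[Proposition 3.2]{lattices}, i.e.\ the dual of Lemma \ref{lem:compactness} of this paper; no analysis of Ext-projectives or of minimal right approximations is needed. For the second: along a strictly descending chain of covers $\mathbf{t}_0 \supsetneq \mathbf{t}_1 \supsetneq \cdots$, the label $B_i$ of the arrow $\mathbf{t}_i \to \mathbf{t}_{i+1}$ lies in $\mathbf{t}_i$ and in the torsionfree class of $\mathbf{t}_{i+1}$, while for $j > i$ the label $B_j$ lies in $\mathbf{t}_j \subseteq \mathbf{t}_{i+1}$; hence $B_j \not\cong B_i$, the labels along the chain are automatically pairwise distinct, and no control of the fibres of the labelling map is required. (Alternatively, $(\neg\mathrm{TF1}) \Rightarrow (\neg\mathrm{TF4})$ follows directly from the injection of \cite{g-vectors} sending an indecomposable $\tau$-rigid module $X$ to the brick $X/\mathrm{rad}_{\mathrm{End}(X)}X$.) Note also that the surjectivity of the labelling you ``grant'' for $(\mathrm{TF2}) \Rightarrow (\mathrm{TF4})$ is the finite-dimensional content of Lemma \ref{lem:tTFinGen}: every brick $B$ is the minimal co-extending module of the pair generated by $B$, hence labels some arrow below $\widetilde{\mathbf{T}}(B)$ once the lattice is finite. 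With these repairs the cycle closes; as written, however, the K\"onig step is false and the two pivotal steps are left unproven.
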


For the next characterisation, we recall the concept of silting module, introduced in \cite{siltMod}:

\begin{de}
Let $ R $ be a ring.

Let $ \Sigma $ be a collection of homomorphisms in $ \mathrm{Proj}(R) $, the subcategory of projective modules. We define the class
\[
\mathcal{D}_\Sigma := \{ M \in \Lambda-\mathrm{Mod}\ |\ \operatorname{Hom}(\sigma,\, M)\text{ is surjective for all }\sigma \in \Sigma    \}
\]
A module $ M \in R-\mathrm{Mod} $ is called \emph{silting} if there exists a projective presentation $  P \xrightarrow{\sigma} Q \to M \to 0 $ such that $ \mathrm{Gen}(M) = \mathcal{D}_\sigma $. Notice that in this case $ \mathcal{D}_\sigma $ is a torsion class in $ R-\mathrm{Mod} $. 

Two silting modules are said to be equivalent if they generate the same torsion class.
\end{de}

\begin{prop}[\cite{siltMod}]
A module $ M $ over a finite dimensional algebra is support $ \tau-$tilting if and only if it is a finite dimensional silting module.
\end{prop}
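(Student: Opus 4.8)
The plan is to prove both implications by comparing, inside $\Lambda$-Mod, the two torsion classes attached to $M$: the class $\operatorname{Gen}(M)$ and the class $\mathcal{D}_\sigma$ associated to a projective presentation $\sigma$. The bridge between the $\tau$-tilting side and the silting side is the Auslander--Reiten formula of \cite{tau-orig}: if $M \in \Lambda$-mod has minimal projective presentation $P_1 \xrightarrow{\sigma} P_0 \to M \to 0$, then for every \emph{finite-dimensional} $N$ the map $\operatorname{Hom}(\sigma, N)$ is surjective if and only if $\operatorname{Hom}(M, \tau N) = 0$. Thus the finite-dimensional part of the torsion class $\mathcal{D}_\sigma$ is exactly $\{N \in \Lambda\text{-mod} : \operatorname{Hom}(M, \tau N) = 0\}$, and the membership $M \in \mathcal{D}_\sigma$ is precisely condition $(\tau 1)$. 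I will also use that, since $\mathcal{D}_\sigma$ is cut out by surjectivity of $\operatorname{Hom}(\sigma,-)$ for a map $\sigma$ of finitely generated projectives, it is a definable torsion class and is therefore determined by its intersection with $\Lambda$-mod.

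For the forward implication, assume $M$ is support $\tau$-tilting, say $\tau$-tilting as a $\Lambda/(e)$-module, and let $\bar{P}_1 \xrightarrow{\sigma_0} \bar{P}_0 \to M \to 0$ be its minimal projective presentation over $\Lambda/(e)$. Lift $\sigma_0$ to $\Lambda$ and set $\sigma := (\sigma_0,\,0) : P_1 \oplus \Lambda e \to P_0$, where the summand $\Lambda e \xrightarrow{0} P_0$ records the support. A direct computation of $\operatorname{Hom}(\sigma, N)$ gives $\mathcal{D}_\sigma = \{N : eN = 0\} \cap \mathcal{D}_{\sigma_0}$: surjectivity onto the component $\operatorname{Hom}(\Lambda e, N) = eN$ forces $eN = 0$, and on $\Lambda/(e)$-modules the remaining condition is membership in $\mathcal{D}_{\sigma_0}$. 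By the Auslander--Reiten formula applied over $\Lambda/(e)$, the finite-dimensional part of this intersection is $\{N : \operatorname{Hom}_{\Lambda/(e)}(M, \tau_{\Lambda/(e)} N) = 0\}$, which by \cite{tau-orig} coincides with $\operatorname{Gen}_{\Lambda/(e)}(M) = \operatorname{Gen}_\Lambda(M)$ because $M$ is $\tau$-tilting over $\Lambda/(e)$. As both $\mathcal{D}_\sigma$ and $\operatorname{Gen}(M)$ are determined by their finite-dimensional parts, I conclude $\operatorname{Gen}(M) = \mathcal{D}_\sigma$, so $M$ is a finite-dimensional silting module.

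For the converse, let $M$ be finite-dimensional and silting with $\operatorname{Gen}(M) = \mathcal{D}_\sigma$. After removing redundant summands I may take $\sigma : P_1 \to P_0$ reduced and split off the maximal projective summand of $P_1$ on which $\sigma$ vanishes, say $\Lambda e$, writing $\sigma = (\sigma_0, 0)$; this $e$ is intrinsically determined by the torsion class $\operatorname{Gen}(M)$. As before $\mathcal{D}_\sigma \subseteq \{N : eN = 0\}$, and since $M = \operatorname{coker}\sigma \in \operatorname{Gen}(M) = \mathcal{D}_\sigma$ we get $eM = 0$, so $M$ is a $\Lambda/(e)$-module with minimal projective presentation $\sigma_0$. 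Condition $(\tau 1)$ over $\Lambda/(e)$ is then immediate from the Auslander--Reiten formula applied to $M \in \mathcal{D}_{\sigma_0}$. It remains to check the counting condition $(\tau 2)$, namely that the number of indecomposable summands of $M$ equals the number of simple $\Lambda/(e)$-modules. For this I would use that $\operatorname{Gen}(M) = \mathcal{D}_\sigma$ exhibits $M$ as an Ext-projective generator of a functorially finite torsion class and invoke the count of indecomposable Ext-projectives from \cite{g-vectors}, the split-off summand $\Lambda e$ accounting precisely for the simples outside the support.

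I expect the main obstacle to be concentrated on the identity $\operatorname{Gen}(M) = \mathcal{D}_\sigma$ in the large module category and on the count $(\tau 2)$. The delicate inclusion is $\mathcal{D}_\sigma \subseteq \operatorname{Gen}(M)$: for an arbitrary, possibly infinite-dimensional $N \in \mathcal{D}_\sigma$ one must produce an epimorphism from a direct sum of copies of $M$, equivalently show that the torsion class $\mathcal{D}_\sigma$ contains no nonzero object annihilated by $\operatorname{Hom}(M,-)$. This is exactly where one passes from the finite-dimensional comparison, which is governed cleanly by the Auslander--Reiten formula, to the large category, and it is handled by the definability of $\mathcal{D}_\sigma$ together with the reconstruction of a definable class from its finitely presented part. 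The counting $(\tau 2)$ is the genuinely $\tau$-tilting-theoretic input, where the maximality built into the silting condition must be converted into the numerical equality between indecomposable summands of $M$ and simple $\Lambda/(e)$-modules.
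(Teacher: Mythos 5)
The paper does not actually prove this proposition; it quotes it from \cite{siltMod}, and your overall plan --- comparing $\operatorname{Gen}(M)$ with $\mathcal{D}_\sigma$, encoding the support by adjoining $\Lambda e \xrightarrow{0} P_0$, and passing between $\Lambda\mathrm{-mod}$ and $\Lambda\mathrm{-Mod}$ via definability and uniqueness of extensions of functorially finite torsion pairs --- is essentially the strategy of the original proof. But two of your concrete steps are wrong as written.

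First, your ``Auslander--Reiten formula'' is stated backwards. For a minimal projective presentation $P_1 \xrightarrow{\sigma} P_0 \to M \to 0$ one has $\operatorname{coker}\operatorname{Hom}(\sigma,N) \cong \operatorname{Tr}M \otimes_\Lambda N \cong D\operatorname{Hom}(N,\tau M)$, so $\operatorname{Hom}(\sigma,N)$ is surjective if and only if $\operatorname{Hom}(N,\tau M)=0$, \emph{not} if and only if $\operatorname{Hom}(M,\tau N)=0$. The two versions coincide at $N=M$, which is why your verification of $(\tau 1)$ is unaffected, but they diverge everywhere else: for $M=\Lambda$ with $\sigma\colon 0\to\Lambda$ one has $\mathcal{D}_\sigma=\Lambda\mathrm{-Mod}=\operatorname{Gen}(\Lambda)$, whereas your class $\{N : \operatorname{Hom}(\Lambda,\tau N)=0\}$ consists of the projective modules. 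Consequently your identification of the finite-dimensional part of $\mathcal{D}_\sigma$ with $\operatorname{Gen}(M)$ cites \cite{tau-orig} for a false statement; the argument is salvageable only because the \emph{corrected} formula matches the AIR characterization of $\tau$-tilting modules, $\operatorname{Gen}(M)={}^{\perp_0}(\tau M)$ within the support, which is what must be invoked.

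Second, in the forward direction your $\sigma$ is not a projective presentation of $M$, and the definition of silting requires $\operatorname{coker}\sigma\cong M$. If $\bar P_1 \xrightarrow{\sigma_0} \bar P_0 \to M \to 0$ is minimal over $\Lambda/(e)$ and $\tilde\sigma_0\colon P_1 \to P_0$ is a lift to the $\Lambda$-projective covers, then $\operatorname{coker}\tilde\sigma_0$ surjects onto $M$ with kernel $(e)P_0/\bigl(\operatorname{im}\tilde\sigma_0 \cap (e)P_0\bigr)$, which is nonzero in general: for $\Lambda = kA_2$, $e$ the idempotent with $\Lambda/(e)\cong k$, and $M$ the simple at the remaining vertex, the lifted presentation is $0 \to P$ and its cokernel is the length-two projective $P$, not $M$. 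The fix, which is what \cite{siltMod} does, is to take the minimal projective presentation $\sigma_1$ of $M$ \emph{over $\Lambda$} and set $\sigma=(\sigma_1,0)\colon P_1\oplus\Lambda e\to P_0$. Finally, your treatment of $(\tau 2)$ in the converse is only a declaration of intent; converting the silting equality $\operatorname{Gen}(M)=\mathcal{D}_\sigma$ into the count of indecomposable summands (i.e.\ showing $(M,\Lambda e)$ is a support $\tau$-tilting pair via Ext-projectivity in the functorially finite torsion class $\operatorname{Gen}(M)$ and the AIR bijection) is the genuinely $\tau$-tilting-theoretic half of the proof and remains to be written.
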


\begin{thm}[\cite{tau-tilt}]
Let $ \Lambda $ be a finite dimensional algebra. TFAE:
\begin{itemize}
\item[(i)] $ \Lambda $ is $ \tau-$tilting finite.
\item[(ii)] For every $ \mathcal{T} \in \mathbf{ Tors }(\Lambda) $ there is a finite dimensional silting module $ S $, with $ \mathcal{T} = \mathrm{Gen}(S) $.
\item[(iii)] Every silting module is finite dimensional up to equivalence.
\end{itemize}
\end{thm}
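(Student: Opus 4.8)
The plan is to establish the cycle of implications $(i)\Rightarrow(ii)\Rightarrow(iii)\Rightarrow(i)$, reserving the genuine difficulty for the last step.

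For $(i)\Rightarrow(ii)$ I would start from the two bijections already available: the Adachi--Iyama--Reiten correspondence between functorially finite torsion classes in $\Lambda-\mathrm{mod}$ and support $\tau-$tilting modules, and the Proposition above identifying support $\tau-$tilting modules with finite dimensional silting modules. Assuming $\Lambda$ is $\tau-$tilting finite, (TF2) and (TF3) guarantee that $\mathbf{tors}(\Lambda)$ is finite and that every torsion class in $\Lambda-\mathrm{mod}$ is functorially finite, hence cut out as $\mathrm{Gen}(S)\cap\Lambda-\mathrm{mod}$ for a finite dimensional silting module $S$. It then remains to pass from $\Lambda-\mathrm{mod}$ to $\Lambda-\mathrm{Mod}$: given $\mathcal{T}\in\mathbf{Tors}(\Lambda)$, I would restrict to $\mathcal{T}\cap\Lambda-\mathrm{mod}\in\mathbf{tors}(\Lambda)$, take the associated finite dimensional silting module $S$, and show $\mathcal{T}=\mathrm{Gen}(S)$. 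The crux here is that, under $\tau-$tilting finiteness, every large torsion class is recovered from and generated by its finitely presented part; this is exactly where I would invoke the Crawley-Boevey bijection between torsionfree classes in $\Lambda-\mathrm{mod}$ and definable torsionfree classes in $\Lambda-\mathrm{Mod}$, using that finiteness of the lattice $\mathbf{tors}(\Lambda)$ forces every torsion class in $\Lambda-\mathrm{Mod}$ to be the direct-limit closure of a torsion class in $\Lambda-\mathrm{mod}$.

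The implication $(ii)\Rightarrow(iii)$ is then immediate: if $M$ is any silting module, then $\mathrm{Gen}(M)=\mathcal{D}_\sigma$ is a torsion class in $\mathbf{Tors}(\Lambda)$, so by $(ii)$ there is a finite dimensional silting module $S$ with $\mathrm{Gen}(M)=\mathrm{Gen}(S)$; since two silting modules are equivalent precisely when they generate the same torsion class, $M$ is finite dimensional up to equivalence.

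The real work is in $(iii)\Rightarrow(i)$, which I would prove by contraposition: assuming $\Lambda$ is $\tau-$tilting infinite, I must exhibit a silting module that is not equivalent to any finite dimensional one. By the failure of (TF3), $\tau-$tilting infiniteness provides a torsion class $\mathcal{U}$ in $\Lambda-\mathrm{mod}$ that is not functorially finite (for instance arising along an infinite chain in the infinite lattice $\mathbf{tors}(\Lambda)$). I would then form its definable closure in $\Lambda-\mathrm{Mod}$, the torsion class obtained by closing $\mathcal{U}$ under direct limits, realize this large torsion class as $\mathrm{Gen}(S)$ for a silting module $S$, and argue that $S$ cannot be finite dimensional: otherwise $S$ would be support $\tau-$tilting and $\mathcal{U}=\mathrm{Gen}(S)\cap\Lambda-\mathrm{mod}$ would be functorially finite, contradicting the choice of $\mathcal{U}$. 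The main obstacle, and the step demanding the most care, is precisely the construction of the silting module $S$ realizing this large torsion class: one must verify that the chosen definable torsion class admits a projective presentation $\sigma$ with $\mathcal{D}_\sigma=\mathrm{Gen}(S)$, that is, that it is genuinely a silting class and not merely a torsion class. This is where the non functorial finiteness of $\mathcal{U}$ must be converted into the existence of an honest, infinite dimensional, silting object.
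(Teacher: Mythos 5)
First, note that the paper does not prove this theorem: it is quoted from \cite{tau-tilt}, so your attempt can only be measured against the cited source and against the machinery the paper itself assembles. Your overall skeleton is reasonable: $(ii)\Rightarrow(iii)$ is indeed the immediate definition-chase you describe, and $(i)\Rightarrow(ii)$ can be closed, though the clean way is not the Crawley--Boevey bijection by itself (a priori you do not know that an arbitrary $\mathcal{T}\in\mathbf{Tors}(\Lambda)$ has definable torsionfree class, which is what Theorem \ref{thm:CB-bij} requires). Rather: every torsion pair in $\Lambda-\mathrm{Mod}$ restricts to a torsion pair in $\Lambda-\mathrm{mod}$ (the torsion part of a finite dimensional module is finite dimensional), the restriction is functorially finite by (TF3), and functorially finite pairs admit a \emph{unique} extension by Proposition \ref{prop:ffUniqueExt}, namely $(\mathrm{Gen}(S),S^{\perp_0})$ for the support $\tau$-tilting, i.e.\ finite dimensional silting, module $S$ furnished by \cite[Theorem 2.7]{tau-orig}; hence $\mathcal{T}=\mathrm{Gen}(S)$.

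The genuine gap is in $(iii)\Rightarrow(i)$, at exactly the step you flag as the main obstacle, and it is not a matter of missing care: the torsion class you chose can \emph{never} be realized as $\mathrm{Gen}(S)$ for a silting module $S$. Over a left noetherian ring the silting torsion classes are precisely the definable torsion classes (\cite[Corollary 3.8]{siltComm}, as recalled at the end of Section 1 of the paper). Your candidate is the direct limit closure of a non functorially finite $\mathbf{u}$, i.e.\ the pair $(\underrightarrow{\lim}\,\mathbf{u},\,\underrightarrow{\lim}\,\mathbf{f})=(\mathrm{Gen}(\mathbf{u}),\mathbf{u}^{\perp_0})$, whose torsionfree class is already definable by Theorem \ref{thm:CB-bij}. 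If its torsion class were silting, hence definable, then both classes of the pair would be definable, and Proposition \ref{prop:ffUniqueExt} ($(2)\Rightarrow(1)$) would force $\mathbf{u}$ to be functorially finite --- contradicting its choice. So completing your construction would contradict a proposition of the paper; the non functorial finiteness of $\mathbf{u}$ cannot be ``converted'' into siltingness of this particular extension, it is precisely the obstruction to it.

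The repair is to use the \emph{other} canonical extension of $(\mathbf{u},\mathbf{f})$, the one with the largest torsion class $\mathcal{T}={}^{\perp_0}\mathbf{f}$. This class is definable, being the intersection $\bigcap_{M\in\mathbf{f}}{}^{\perp_0}M$ of definable classes (Examples \ref{ex:definable}, items (1') and (2)), hence it is a silting class, $\mathcal{T}=\mathrm{Gen}(S)$ for some silting $S$. If $S$ were finite dimensional up to equivalence, it would be support $\tau$-tilting and the restriction $\mathbf{u}=\mathrm{Gen}(S)\cap\Lambda-\mathrm{mod}$ would be functorially finite by \cite[Theorem 2.7]{tau-orig}, a contradiction; so $S$ is the desired large silting module. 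This is, in essence, the route of \cite{tau-tilt}, where the non finite dimensional silting class is produced as a definable torsion class (there, as an intersection of a strictly descending chain of finite dimensional silting classes, definability being preserved under intersections), rather than as a direct limit closure.
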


There are dual notions of cosilting and $\tau^{-}-$tilting modules, with dual properties linked to torsionfree classes. See \cite{siltComm} for details and additional results about cosilting modules.

\subsection{Definable torsion and torsionfree classes}

We collect some well known facts about definable classes and purity. A comprehensive reference can be found in \cite{purity}. 

\begin{de}
A short exact sequence $ 0 \to L \to M \to N \to 0 $ in $ R-\mathrm{Mod} $ is \emph{pure} if for every $ U \in R-\mathrm{mod} $ the sequence
\[
\begin{tikzcd}
0 \arrow[r] & \mathrm{Hom}(U, L) \arrow[r] & \mathrm{Hom}(U, M) \arrow[r] & \mathrm{Hom}(U, N) \arrow[r] & 0
\end{tikzcd}
\]
is an exact sequence of abelian groups. In this case, we say that $ L $ is a \emph{pure submodule} of $ M $.
\end{de}

\begin{de}
Let $ \mathcal{D} \subseteq R-\mathrm{Mod} $. We say that $ \mathcal{D} $ is \emph{definable} if it is closed under products, pure submodules and direct limits.
\end{de}

For finite dimensional algebras, we have the following: 

\begin{prop}[{\cite{Ringel}}]
A short exact sequence $ 0 \to L \to M \to N \to 0 $ in $ \Lambda-\mathrm{Mod} $ is \emph{pure} if for every $ U \in \Lambda-\mathrm{mod} $ the sequence
\[
\begin{tikzcd}
0 \arrow[r] & \mathrm{Hom}(N, U) \arrow[r] & \mathrm{Hom}(M, U) \arrow[r] & \mathrm{Hom}(L, U) \arrow[r] & 0
\end{tikzcd}
\]
is exact in $ \mathrm{Ab}$. 
\end{prop}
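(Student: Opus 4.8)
The plan is to reduce the contravariant $\operatorname{Hom}(-,U)$ condition to the usual tensor characterization of purity by passing through the $k$-duality $D = \operatorname{Hom}_k(-,k)$, which exchanges finite-dimensional left and right $\Lambda$-modules. Since $\Lambda$ is finite dimensional, $\Lambda-\mathrm{mod}$ consists precisely of the finite-dimensional modules, so for $U \in \Lambda-\mathrm{mod}$ the dual $DU$ is a finite-dimensional right $\Lambda$-module and $DDU \cong U$ naturally.

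First I would record the natural isomorphism, valid for any left module $X$ and any finite-dimensional left module $U$,
\[
\operatorname{Hom}_\Lambda(X, U) \cong \operatorname{Hom}_\Lambda(X, DDU) \cong \operatorname{Hom}_k(DU \otimes_\Lambda X, k) = D(DU \otimes_\Lambda X),
\]
where the middle isomorphism is the Hom--tensor adjunction between $DU \otimes_\Lambda -$ and $\operatorname{Hom}_k(DU, -)$ over the base field $k$. This isomorphism is natural in $X$, so applying it to the short exact sequence $0 \to L \to M \to N \to 0$ identifies the complex
\[
0 \to \operatorname{Hom}(N, U) \to \operatorname{Hom}(M, U) \to \operatorname{Hom}(L, U) \to 0
\]
with the complex obtained by applying $D$ to $0 \to DU \otimes_\Lambda L \to DU \otimes_\Lambda M \to DU \otimes_\Lambda N \to 0$.

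Since $k$ is a field, $D$ is exact and faithful on $k$-vector spaces, hence a complex of vector spaces is exact if and only if its $D$-dual is exact. Therefore the $\operatorname{Hom}(-,U)$ sequence above is exact if and only if the tensored sequence $0 \to DU \otimes_\Lambda L \to DU \otimes_\Lambda M \to DU \otimes_\Lambda N \to 0$ is exact. As $U$ ranges over $\Lambda-\mathrm{mod}$, its dual $V = DU$ ranges over all finite-dimensional right $\Lambda$-modules, so the condition in the statement is equivalent to the exactness of $V \otimes_\Lambda -$ on the short exact sequence for every finite-dimensional right module $V$.

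Finally I would invoke the classical tensor characterization of purity (see \cite{purity}): a short exact sequence is pure, in the sense of the $\operatorname{Hom}(U,-)$ definition given in the text, exactly when it remains exact after tensoring with every finitely presented right module, which here means every finite-dimensional right module. Combining this with the previous equivalence yields the proposition, and since every step is an \emph{iff} the argument delivers both implications of Ringel's criterion at once. The only point demanding care is the naturality of the displayed isomorphism, so that it genuinely carries the contravariant $\operatorname{Hom}$-complex to the $D$-dual of the tensored complex; the main conceptual step, doing essentially all the work, is this passage through $D$ converting the contravariant hom condition into the covariant tensor condition that matches the standard definition of purity.
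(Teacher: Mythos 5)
Your argument is correct, and it is worth noting that the paper itself gives no proof of this proposition (it is quoted from Ringel \cite{Ringel}); your reduction is precisely the standard argument behind that citation. The chain $\operatorname{Hom}_\Lambda(X,U)\cong \operatorname{Hom}_\Lambda(X,DDU)\cong D(DU\otimes_\Lambda X)$, natural in $X$, together with exactness and faithfulness of $D=\operatorname{Hom}_k(-,k)$ on all $k$-vector spaces, the fact that $DU$ ranges over all finite-dimensional (equivalently, finitely presented) right $\Lambda$-modules as $U$ ranges over $\Lambda-\mathrm{mod}$, and the classical tensor characterization of purity from \cite{purity}, is exactly what is needed, and each step checks out.
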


\begin{exs}
\label{ex:definable}
\begin{itemize}
\item[(1)] Let $ M \in R-\mathrm{mod} $. Then the torsionfree class $ M^{\perp_0} $ is a definable subcategory of $ R-\mathrm{Mod} $.
\item[(1')] Let $ M \in \Lambda-\mathrm{mod} $. Then, as shown in \cite{dualities}, the torsion class $ {}^{\perp_0}M $ is a definable subcategory of $ \Lambda-\mathrm{Mod} $.
\item[(2)] Given a set-indexed family of definable classes $ \{ \mathcal{D}_i \}_{i \in I} $, the intersection $ \mathcal{D} = \bigcap_{i \in I} \mathcal{D}_i $ is a definable class.
\end{itemize}
\end{exs}

We will be concerned with definable torsion classes and definable torsionfree classes. 

For definable torsionfree classes, we have a result valid for any noetherian ring, using \cite{locallyFp}:

\begin{thm}
\label{thm:CB-bij}
Let $ R $ be a noetherian ring. There is a bijection between torsion pairs in $ R-\mathrm{mod} $ and torsion pairs in $ R-\mathrm{Mod} $ with definable torsionfree class.

This bijection associates to a torsion pair $ ( \mathbf{t}, \mathbf{f} ) $ in $ R-\mathrm{mod} $ the limit closure $ ( \mathcal{T} := \underrightarrow{\lim}\, \mathbf{t},\, \mathcal{F} := \underrightarrow{\lim}\, \mathbf{f}) $. In this setting, $ \mathcal{ T } $ can also be described as $ \mathrm{Gen}(\mathbf{t}) $  and $ \mathcal{F} = \mathbf{t}^{\perp_0} $. 

The inverse of this map sends a torsion pair $ ( \mathcal{T} , \mathcal{F} ) $ to its restriction $ ( \mathcal{T} \cap R-\mathrm{mod}, \mathcal{F} \cap R-\mathrm{mod} ) $.
\end{thm}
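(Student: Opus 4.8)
The plan is to establish the two implications separately. The implication that $\tau$-tilting finiteness forces every brick to be finitely generated is the easier one and rests on the finiteness of $\mathbf{tors}(\Lambda)$ together with the bijection of Theorem~\ref{thm:CB-bij}; the converse, that a $\tau$-tilting infinite algebra must admit an infinite dimensional brick, is the substantial part and is where the locally maximal torsion classes of Section~3 enter. Throughout I use that $\Lambda$ is noetherian, so Theorem~\ref{thm:CB-bij} applies, and that over $\Lambda$ finitely generated means finite dimensional.

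For "$\tau$-tilting finite $\Rightarrow$ every brick is finitely generated", let $B \in \Lambda-\mathrm{Mod}$ be a brick. By the result of Section~2, $B$ is torsionfree, almost torsion for some torsion pair $(\mathcal{T}, \mathcal{F})$ in $\Lambda-\mathrm{Mod}$ whose torsionfree class $\mathcal{F}$ is definable. Theorem~\ref{thm:CB-bij} then writes $\mathcal{F} = \varinjlim \mathbf{f}$ with $\mathbf{f} = \mathcal{F} \cap \Lambda-\mathrm{mod}$ and exhibits $(\mathcal{T},\mathcal{F})$ as the limit closure of a torsion pair $(\mathbf{t}, \mathbf{f})$ in $\Lambda-\mathrm{mod}$. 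Since $\Lambda$ is $\tau$-tilting finite, $\mathbf{tors}(\Lambda)$ is finite by (TF2) and $\mathbf{t}$ is functorially finite by (TF3), so $\mathcal{T} = \mathrm{Gen}(\mathbf{t})$ is generated by a finite dimensional silting module and, dually, $\mathcal{F}$ is cogenerated by a finite dimensional cosilting module. I then argue that the torsionfree, almost torsion module of such a pair must lie in $\mathbf{f}$, so that $B$, being that module, is finite dimensional; concretely $B$ is forced to coincide with the finite dimensional brick attached to the corresponding cover of $\mathbf{t}$ in the finite lattice $\mathbf{tors}(\Lambda)$, of which there are only finitely many by (TF4).

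For the converse I argue contrapositively. Suppose $\Lambda$ is $\tau$-tilting infinite. By the result of Section~3 there is a proper torsion class $\mathcal{T}_0 \in \mathbf{tors}(\Lambda)$ that is locally maximal, i.e. nothing covers it in $\mathbf{tors}(\Lambda)$, and is not functorially finite. Via Theorem~\ref{thm:CB-bij} I pass to the torsion pair $(\overline{\mathcal{T}}, \overline{\mathcal{F}})$ in $\Lambda-\mathrm{Mod}$ with $\overline{\mathcal{T}} = \mathrm{Gen}(\mathcal{T}_0)$ and $\overline{\mathcal{F}} = \mathcal{T}_0^{\perp_0}$ definable. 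The goal is to exhibit inside $\overline{\mathcal{F}}$ a module $B$ that is torsionfree, almost torsion for $(\overline{\mathcal{T}}, \overline{\mathcal{F}})$, that is a brick, and that is not finite dimensional; such a $B$ is then a brick over $\Lambda$ which is not finitely generated, as required. For the brick property one first notes that any nonzero endomorphism $f$ of such a $B$ is injective, since otherwise its image $B/\ker f$ would be a proper quotient of $B$, hence in $\overline{\mathcal{T}}$, while lying in $\overline{\mathcal{F}}$ as a submodule of $B$, forcing $\operatorname{im} f \in \overline{\mathcal{T}} \cap \overline{\mathcal{F}} = 0$; that $\mathrm{End}(B)$ is moreover a division ring follows by a Schur-type argument for torsionfree, almost torsion modules, as developed in Section~2.

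The step I expect to be the main obstacle is the simultaneous existence and infinite-dimensionality of $B$. The existence of a torsionfree, almost torsion object in $\overline{\mathcal{F}}$ has to be extracted from the definability of $\overline{\mathcal{F}}$, equivalently from the cosilting description of the pair, where the failure of functorial finiteness of $\mathcal{T}_0$ manifests as the infinite-dimensionality of the governing cosilting module, read off from the silting characterisation of $\tau$-tilting finiteness. Granting the existence of $B$, its infinite-dimensionality follows from local maximality: were $B$ finite dimensional it would lie in $\overline{\mathcal{F}} \cap \Lambda-\mathrm{mod}$ and remain torsionfree, almost torsion there, whence through the lattice isomorphism of Theorem~\ref{thm:CB-bij} it would realise a cover of $\mathcal{T}_0$ in $\mathbf{tors}(\Lambda)$, contradicting that $\mathcal{T}_0$ is locally maximal. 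The delicate points to be checked carefully are precisely that a finite dimensional torsionfree, almost torsion module for the large pair descends to a genuine cover of $\mathcal{T}_0$ in $\mathbf{tors}(\Lambda)$, and that the non functorial finiteness of $\mathcal{T}_0$ is exactly what guarantees that $B$ exists in $\Lambda-\mathrm{Mod}$ while no finite dimensional candidate does.
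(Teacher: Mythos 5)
Your proposal does not prove the statement it was supposed to prove. The statement is Theorem \ref{thm:CB-bij}: for a noetherian ring $R$, the assignment $(\mathbf{t},\mathbf{f}) \mapsto (\underrightarrow{\lim}\,\mathbf{t},\, \underrightarrow{\lim}\,\mathbf{f})$ is a bijection between torsion pairs in $R-\mathrm{mod}$ and torsion pairs in $R-\mathrm{Mod}$ with definable torsionfree class, with inverse given by restriction. What you wrote is instead a sketch of the paper's \emph{main theorem} (the brick version of Auslander's theorem), and you repeatedly \emph{invoke} Theorem \ref{thm:CB-bij} as a known ingredient rather than proving it. As a consequence, none of the actual content of the statement is addressed anywhere in your text: you never verify that $(\underrightarrow{\lim}\,\mathbf{t}, \underrightarrow{\lim}\,\mathbf{f})$ is a torsion pair in $R-\mathrm{Mod}$, that $\underrightarrow{\lim}\,\mathbf{f}$ is definable, that $\underrightarrow{\lim}\,\mathbf{t} = \mathrm{Gen}(\mathbf{t})$ and $\underrightarrow{\lim}\,\mathbf{f} = \mathbf{t}^{\perp_0}$, that the extension restricts back to $(\mathbf{t},\mathbf{f})$ (surjectivity of the restriction map), or that a torsion pair in $R-\mathrm{Mod}$ with definable torsionfree class is uniquely determined by its restriction to $R-\mathrm{mod}$ (injectivity). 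Assuming the theorem in order to ``prove'' it is a circular non-start, so the gap here is total: the required argument is simply absent.

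For orientation: the paper itself does not supply a proof of Theorem \ref{thm:CB-bij} either, but attributes it to Crawley-Boevey via \cite{locallyFp} (note that the paper \emph{does} prove the dual statement, Theorem \ref{thm:CB-dual}, using Lemma \ref{lem:pureSub}, and its proof pattern shows what is expected). A genuine proof would run along these lines: since $R$ is noetherian, $R-\mathrm{mod}$ is an abelian subcategory of the locally finitely presented category $R-\mathrm{Mod}$; one checks that $\mathbf{t}^{\perp_0}$ is definable (as in Examples \ref{ex:definable}(1) and (2), intersecting over the finitely generated modules in $\mathbf{t}$) and coincides with $\underrightarrow{\lim}\,\mathbf{f}$, that $\mathrm{Gen}(\mathbf{t}) = \underrightarrow{\lim}\,\mathbf{t}$ is the corresponding torsion class, and conversely that a definable torsionfree class is the direct limit closure of its finitely presented part --- this last point, resting on Crawley-Boevey's theory, is what makes the restriction map injective. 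Separately, even judged as a sketch of the main theorem, your text leaves its own crux (the existence of a torsionfree, almost torsion module for the extended pair, which the paper obtains from the Grothendieck property of the heart via Theorem \ref{thm:GrotHeart} and Corollary \ref{cor:TfExist}, and its infinite-dimensionality via Lemma \ref{lem:finTfT} and Proposition \ref{prop:finInf}) explicitly unproven, so it would not stand on its own in any case.
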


A dual version of this bijection holds true in the setting of finite dimensional algebras as a direct consequence of the following observation \cite[Section 2.2]{infDimInRT}:

\begin{lem}
\label{lem:pureSub}
Let $ M \in \Lambda-\mathrm{Mod} $. Then $ M $ is a pure submodule of the product of its finite-dimensional quotients. 
\end{lem}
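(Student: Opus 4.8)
The plan is to exhibit the natural evaluation map into the product of finite-dimensional quotients and then verify that it is a pure monomorphism by means of the contravariant criterion for purity recorded in Ringel's Proposition above. Concretely, let $\mathcal{U}$ be the set of all submodules $U \subseteq M$ with $\dim_k(M/U) < \infty$, and form
\[
\phi\colon M \longrightarrow P := \prod_{U \in \mathcal{U}} M/U, \qquad m \longmapsto (m+U)_{U \in \mathcal{U}}.
\]
I must show (a) that $\phi$ is injective and (b) that the short exact sequence $0 \to M \xrightarrow{\phi} P \to C \to 0$, with $C := \operatorname{coker}\phi$, is pure.

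For injectivity I would use that a finite dimensional algebra admits a finite-dimensional injective cogenerator, namely $I := \operatorname{Hom}_k(\Lambda,k)$ (standard via the duality $D=\operatorname{Hom}_k(-,k)$). Being a cogenerator, for every $0 \neq m \in M$ there is a homomorphism $g\colon M \to I$ with $g(m) \neq 0$; since $I$ is finite-dimensional, $M/\ker g \cong \operatorname{im} g \subseteq I$ is finite-dimensional, so $\ker g \in \mathcal{U}$ while $m \notin \ker g$. Hence $\ker\phi = \bigcap_{U \in \mathcal{U}} U = 0$. This is the step where the hypothesis that $\Lambda$ is finite dimensional (equivalently, that it has enough finite-dimensional injectives) is genuinely used.

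For purity I would apply the contravariant characterization: by Ringel's Proposition it suffices to show that for every $U \in \Lambda\text{-mod}$ the sequence $0 \to \operatorname{Hom}(C,U) \to \operatorname{Hom}(P,U) \to \operatorname{Hom}(M,U) \to 0$ is exact. Exactness on the left and in the middle is automatic, being merely the left-exactness of $\operatorname{Hom}(-,U)$ applied to the defining sequence of $C$; the only real point is surjectivity of the restriction map $\operatorname{Hom}(P,U) \to \operatorname{Hom}(M,U)$, $h \mapsto h\circ\phi$. So let $f\colon M \to U$ be arbitrary with $U$ finite-dimensional. Then $M/\ker f \cong \operatorname{im} f \subseteq U$ is finite-dimensional, so $U_0 := \ker f$ belongs to $\mathcal{U}$ and $M/U_0$ is one of the factors of $P$. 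Writing $\operatorname{pr}_{U_0}\colon P \to M/U_0$ for the canonical projection and $\bar f\colon M/U_0 \hookrightarrow U$ for the injection induced by $f$, I obtain $f = \bar f \circ \operatorname{pr}_{U_0} \circ \phi$, so $f$ lifts to $\bar f\circ \operatorname{pr}_{U_0} \in \operatorname{Hom}(P,U)$, as required.

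I expect the factorization in the last paragraph to be the conceptual heart of the argument: the decisive observation is that the kernel of any homomorphism from $M$ into a finite-dimensional module automatically has finite codimension, hence already occurs among the $U \in \mathcal{U}$, so the corresponding projection out of $P$ produces the needed lift. Granting the existence of the finite-dimensional injective cogenerator, both the injectivity and the purity reduce to this elementary factorization, and no further calculation should be necessary.
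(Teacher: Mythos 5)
Your proof is correct and follows essentially the same route as the paper: the paper's (one-line) proof likewise obtains purity from the observation that every homomorphism from $M$ to a finite-dimensional module factors through a finite-dimensional quotient, hence through the canonical map into the product, and obtains injectivity from the finite dimensionality of the injective cogenerator $D(\Lambda)=\operatorname{Hom}_k(\Lambda,k)$. You have simply written out in full the details the paper leaves implicit, including the explicit appeal to Ringel's contravariant purity criterion.
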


\begin{proof}
The fact that such a map is a pure monomorphism can be obtained immediately:  every map from $ M $ to a finite dimensional module must factor through a finite dimensional quotient ( Use that the injective cogenerator is finite dimensional to conclude that the map is injective).
\end{proof}

\begin{thm}
\label{thm:CB-dual}
Let $ \Lambda $ be a finite dimensional algebra over a field $ k $. There is a bijection between torsion pairs in $ \Lambda-\mathrm{mod} $ and torsion pairs in $ \Lambda-\mathrm{Mod} $ with definable torsion class.

This bijection associates to a torsion pair $ ( \mathbf{t}, \mathbf{f} ) $ in $ \Lambda-\mathrm{mod} $ the torsion pair cogenerated by $ \mathbf{f}$, $ ( \mathcal{T} := {}^{\perp_0}\mathbf{f},\, \mathcal{F} := ({}^{\perp_0}\mathbf{f})^{\perp_0}) $. In this setting, $ \mathcal{ T } $ can also be described as $ \mathrm{PureCogen}(\mathbf{t}) $, the class of modules obtained as pure submodules of arbitrary direct products of modules in $ \mathbf{t} $.

The inverse of this map sends a torsion pair $ ( \mathcal{T} , \mathcal{F} ) $ to its restriction $ ( \mathcal{T} \cap \Lambda-\mathrm{mod}, \mathcal{F} \cap \Lambda-\mathrm{mod} ) $.
\end{thm}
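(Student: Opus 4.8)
The plan is to show that the two assignments are mutually inverse bijections, arguing directly inside $\Lambda-\mathrm{Mod}$ with Lemma~\ref{lem:pureSub} as the essential tool. One might hope to simply dualise Theorem~\ref{thm:CB-bij} through the $k$-duality $D = \mathrm{Hom}_k(-,k)$, but $D$ fails to be an anti-equivalence on the large module categories (the canonical map into its double dual is not an isomorphism for infinite dimensional modules), so a direct argument is preferable.

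First I would check that the forward assignment is well defined. For a torsion pair $(\mathbf{t},\mathbf{f})$ in $\Lambda-\mathrm{mod}$, the class ${}^{\perp_0}\mathbf{f}$ is closed under quotients, extensions and arbitrary coproducts, hence is a torsion class in $\Lambda-\mathrm{Mod}$, and $({}^{\perp_0}\mathbf{f},\ ({}^{\perp_0}\mathbf{f})^{\perp_0})$ is a torsion pair. Since each $F \in \mathbf{f}$ is finite dimensional, Example~\ref{ex:definable}(1') makes every ${}^{\perp_0}F$ definable, and the equality ${}^{\perp_0}\mathbf{f} = \bigcap_{F \in \mathbf{f}} {}^{\perp_0}F$ together with Example~\ref{ex:definable}(2) shows this torsion class is definable.

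Next I would verify the restriction is inverse on one side. Setting $\mathcal{T} = {}^{\perp_0}\mathbf{f}$ and $\mathcal{F} = \mathcal{T}^{\perp_0}$, a direct comparison of $\mathrm{Hom}$-vanishing conditions, using $\mathbf{t} \subseteq \mathcal{T}$ and that $(\mathbf{t},\mathbf{f})$ is a torsion pair in $\Lambda-\mathrm{mod}$, gives $\mathcal{T} \cap \Lambda-\mathrm{mod} = \mathbf{t}$ and $\mathcal{F} \cap \Lambda-\mathrm{mod} = \mathbf{f}$. I would also record that restricting an arbitrary torsion pair of $\Lambda-\mathrm{Mod}$ to $\Lambda-\mathrm{mod}$ again yields a torsion pair, since $\Lambda-\mathrm{mod}$ is closed under submodules and quotients and the canonical sequence of a finite dimensional module remains finite dimensional.

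The crux --- and the only place where definability and Lemma~\ref{lem:pureSub} genuinely enter --- is the reverse composition: given a torsion pair $(\mathcal{T},\mathcal{F})$ with $\mathcal{T}$ definable and $\mathbf{f} = \mathcal{F} \cap \Lambda-\mathrm{mod}$, one must prove $\mathcal{T} = {}^{\perp_0}\mathbf{f}$. The inclusion $\mathcal{T} \subseteq {}^{\perp_0}\mathbf{f}$ is formal. For the reverse, take $M \in {}^{\perp_0}\mathbf{f}$ with canonical sequence $0 \to tM \to M \to M/tM \to 0$; it suffices to show $M/tM = 0$. If $M/tM \neq 0$, then by Lemma~\ref{lem:pureSub} it is a pure submodule of the product of its finite dimensional quotients; were all of these in $\mathcal{T}$, definability (closure under products and pure submodules) would force $M/tM \in \mathcal{T} \cap \mathcal{F} = 0$, a contradiction. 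Hence some finite dimensional quotient $Q$ of $M/tM$ is not torsion, so $Q/tQ$ is a nonzero object of $\mathbf{f}$ receiving a nonzero map from $M$, contradicting $M \in {}^{\perp_0}\mathbf{f}$; thus $M \in \mathcal{T}$. The description $\mathcal{T} = \mathrm{PureCogen}(\mathbf{t})$ comes from the same lemma: $\mathrm{PureCogen}(\mathbf{t}) \subseteq \mathcal{T}$ by definability, while any $M \in \mathcal{T}$ purely embeds into the product of its finite dimensional quotients, which lie in $\mathbf{t}$ since $\mathcal{T}$ is closed under quotients. I expect this definability-plus-purity argument to be the main obstacle, the remaining verifications being routine manipulations with torsion pairs.
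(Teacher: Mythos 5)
Your proposal is correct and follows essentially the same route as the paper: the key inclusion $ {}^{\perp_0}\mathbf{f} \subseteq \mathcal{T} $ is established via Lemma~\ref{lem:pureSub} (pure embedding into the product of finite dimensional quotients) combined with definability of $ \mathcal{T} $, with the remaining verifications (definability of $ {}^{\perp_0}\mathbf{f} $ via Examples~\ref{ex:definable}, and recovering $ (\mathbf{t},\mathbf{f}) $ by restriction) being routine. The only cosmetic difference is that you run the purity argument by contradiction on $ M/tM $, whereas the paper applies it directly to $ M $, noting that all finite dimensional quotients of $ M \in {}^{\perp_0}\mathbf{f} $ already lie in $ \mathbf{t} \subseteq \mathcal{T} $.
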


\begin{proof}
Let $  ( \mathcal{T} , \mathcal{F} )  $ be a torsion pair in $ \Lambda-\mathrm{Mod} $ with definable torsion class. Let $ (\mathbf{t}, \mathbf{f}) $ be the corresponding restriction. 

 We want to show that $ \mathcal{T} = {}^{\perp_0} \mathbf{f} $. The inclusion $ \mathcal{ T } \subseteq {}^{\perp_0} \mathbf{f} $ is immediate. 
 
 For the opposite one, let $ M \in  {}^{\perp_0} \mathbf{f} $. By Lemma \ref{lem:pureSub}, $ M $ can be obtained as a pure subobject of a direct product of finitely generated objects in $ {}^{\perp_0} \mathbf{f} $ which are exactly the modules in $ \mathbf{t} $. 
 
 Whence $ M $ is a pure submodule of a product of objects in $ \mathcal{T} $, so that by definability it follows that $ M \in \mathcal{T} $. 
 
This proves the injectivity of the assignment. Surjectivity is easy noticing that for any torsion pair $ ( \mathbf{t}, \mathbf{f} ) $ in $ \Lambda-\mathrm{mod} $, the torsion pair $ ({}^{\perp_0} \mathbf{f}, \mathbf{F}(\mathbf{f})) $ is an extension to $ \Lambda-\mathrm{Mod}$ with definable torsion pair. 
\end{proof}

Since we will have a continuous interplay between torsion pairs in the small and in the large module category, we fix the following terminology, for a left noetherian ring $ R $:

\begin{de}
Let $ (\mathcal{T}, \mathcal{F}) $ be a torsion pair in $ R-\mathrm{Mod}$ and $ ( \mathbf{t},\mathbf{f}) $ a torsion pair in $ R-\mathrm{mod} $. Then $ (\mathcal{T}, \mathcal{F}) $ \emph{extends} $ ( \mathbf{t},\mathbf{f}) $ if $ \mathbf{t} = \mathcal{T} \cap R-\mathrm{mod} $ and $ \mathbf{f} = \mathcal{F} \cap R-\mathrm{mod}  $. 

In the setting above, we will also say that $ (\mathcal{T}, \mathcal{F}) $ restricts to $ ( \mathbf{t},\mathbf{f}) $.
\end{de}

\begin{ex}
For any torsion pair $ ( \mathbf{t},\mathbf{f}) $ in $ R-\mathrm{mod} $ we have the following two, not necessarily distinct, extensions to $ R-\mathrm{Mod} $:
\begin{itemize}
\item[(S)] The extension with the largest torsion class $ ({}^{\perp_0}\mathbf{f}, \mathbf{F}(\mathbf{f})  ) $
\item[(C)] The extension with the largest torsionfree class $ (\mathbf{T}(\mathbf{t}), \mathbf{t}^{\perp_0}) $
\end{itemize}
\end{ex}

Over a left noetherian ring $ R $, definable torsion classes are parametrized by silting modules \cite[Corollary 3.8]{siltComm}.

Moreover, definable torsionfree classes over an arbitrary ring are parametrized by cosilting modules \cite{cosiltMod}, \cite{cosilt}.

In the following sections we will always work with left noetherian rings, therefore, we write $ \mathcal{T} \in \mathbf{Silt}(R) $ ( resp. $\mathcal{F} \in \mathbf{Cosilt}(R)  $ ) to indicate that $ \mathcal{T} $ is a definable torsion class (resp. $ \mathcal{F} $ is a definable torsionfree  class ) in $ R-\mathrm{Mod} $.

\section{Torsionfree, almost torsion modules}

The concept of torsionfree, almost torsion modules was already being used by Herzog in 2009, as a tool for studying critical summands of cotilting modules. 

More recently, Barnard, Carroll and Zhu rediscovered this concept in their work on the lattice of torsion classes of the category $ \Lambda-\mathrm{mod} $. 

In this section $ R $ will be a left noetherian ring.

\begin{de}[\cite{slides},\cite{minimIncl}]
\label{de:TtF}

Let $ (\mathcal{ T }, \mathcal{F} ) $ be a torsion pair in $ R-$Mod ( resp. in $R-$mod). Let $ B \ne 0 $ a (finitely generated ) $ R $ module. We say that $ B $ is \emph{torsionfree, almost torsion} ( resp. \emph{minimal extending} ) if:
\begin{itemize}
\item[(1)] $ B \in \mathcal{F} $
\item[(2)] Every proper quotient of $ B $ is contained in $ \mathcal{ T } $. 
\item[(3)] For every short exact sequence $ 0 \to B \to F \to M \to 0 $, if $ F \in \mathcal{F} $, then $ M \in \mathcal{F} $.
\end{itemize} 
Condition (2) is equivalent to:
\begin{itemize}
\item[(2')] For any $ F \in \mathcal{F} $ any nonzero morphism $ B \to F $ is a monomorphism
\end{itemize}
Moreover, assuming coditions (1)-(2), we have the following reformulation of (3):
\begin{itemize}
\item[(3')] For every non-split short exact sequence $ 0 \to B \to M \to T \to 0 $, if $ T \in \mathcal{ T } $ then $ M \in \mathcal{ T } $.
\end{itemize}
Dually, we can define \emph{ torsion, almost torsionfree } (and \emph{ minimal co-extending}) modules.
\end{de}

\begin{rem}
Modules satisfying conditions (1)-(2) in the  definition above are precisely the simple torsionfree objects used in Enomoto's study of torsionfree classes \cite{monobrick}.
\end{rem}

\begin{lem}
For a module satisfying (1) and (2),
condition (3) is equivalent to condition (3').
\end{lem}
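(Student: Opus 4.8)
The plan is to run both implications through the torsion--torsionfree decomposition, using the equivalent form (2') of condition (2) together with the structural facts that $\mathcal{T} \cap \mathcal{F} = 0$, that $\mathcal{F}$ is closed under submodules, and that $\mathcal{T}$ is closed under quotients (so images of torsion modules are torsion). Throughout I write $t(M)$ for the torsion submodule of $M$, so that $M/t(M) \in \mathcal{F}$.

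For $(3') \Rightarrow (3)$, I would start from a short exact sequence $0 \to B \xrightarrow{i} F \xrightarrow{\pi} M \to 0$ with $F \in \mathcal{F}$ and aim to show $t(M) = 0$. The natural move is to pull back the torsion part: set $E := \pi^{-1}(t(M))$, which fits into a sequence $0 \to B \to E \to t(M) \to 0$ and lies in $\mathcal{F}$ as a submodule of $F$. Now I distinguish whether this sequence splits. If it does not split, then (3') forces $E \in \mathcal{T}$; but $E \in \mathcal{F}$ and $E \neq 0$ (it contains $B$), contradicting $\mathcal{T} \cap \mathcal{F} = 0$. If it splits, then $t(M)$ is a direct summand of $E \in \mathcal{F}$, hence torsionfree, hence zero. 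Either way $t(M) = 0$ and $M \in \mathcal{F}$.

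For the converse $(3) \Rightarrow (3')$, I would take a non-split sequence $0 \to B \xrightarrow{i} M \xrightarrow{p} T \to 0$ with $T \in \mathcal{T}$ and consider the projection $q : M \to M/t(M) =: F \in \mathcal{F}$. Applying (2') to the composite $q i : B \to F$, this map is either zero or a monomorphism. If $qi = 0$ then $B \subseteq t(M)$, so $M/t(M)$ is a quotient of $T = M/B$ and therefore lies in $\mathcal{T} \cap \mathcal{F} = 0$, giving $M = t(M) \in \mathcal{T}$, as wanted. If $qi$ is a monomorphism then $B \cap t(M) = 0$, and I would compute $M/(B \oplus t(M))$ in two ways: as $F/qi(B)$, which lies in $\mathcal{F}$ by condition (3) applied to $0 \to B \to F \to F/qi(B) \to 0$, and as $T/p(t(M))$, which is torsion; hence it vanishes and $M = B \oplus t(M)$. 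But then $p$ restricts to an isomorphism $t(M) \xrightarrow{\sim} T$, so the sequence splits, contradicting the hypothesis. Thus only the first case survives and $M \in \mathcal{T}$.

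The delicate point, and the place where the hypotheses are really used, is this case analysis in the second implication: the non-split assumption built into (3') exists precisely to rule out the degenerate situation in which $B$ maps isomorphically onto $M/t(M)$ and the torsion part splits off. Conditions (1)--(2) enter only through the dichotomy (2') for maps out of $B$, while the closure properties of the torsion pair and the vanishing $\mathcal{T} \cap \mathcal{F} = 0$ supply everything else; no finiteness beyond the ambient torsion-pair formalism is needed.
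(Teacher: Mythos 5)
Your proof is correct and takes essentially the same approach as the paper's: both implications pass through the canonical sequence $0 \to t(M) \to M \to M/t(M) \to 0$, use the zero-or-mono dichotomy (2') for maps out of $B$, and conclude via $\mathcal{T} \cap \mathcal{F} = 0$ (pullback of the torsion part for $(3') \Rightarrow (3)$, case analysis on $B \to M/t(M)$ for $(3) \Rightarrow (3')$). The only difference is bookkeeping: the paper organizes the identifications of $M/(B + t(M))$ via snake-lemma diagrams, whereas you compute them directly with the isomorphism theorems.
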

\begin{proof}
Fix a module $ B \ne 0 $ satisfying (1) and (2) with respect to some torsion pair $ ( \mathcal{ T }, \mathcal{ F } ) $. \\
$``(3) \implies (3')":$ Assume $ B $ satisfies condition (3). Consider a short exact sequence $ 0 \to B \to M \to T \to 0 $, with $ T $ torsion. 

Take the canonical short exact sequence given by the torsion pair $$ 0 \to tM \to M \to M/tM \to 0 $$ with $ tM $ torsion, $ M/tM $ torsionfree. 

Since $ B $ satisfies (2) the map $ g : B \to M \to M/tM $ is zero or injective. Assume it is zero. Then we have the following commutative diagram, obtained with an application of the snake lemma and of the kernel property:
\[
\begin{tikzcd}
0 \ar[r] & B \ar[equal]{d} \ar[r] & tM \ar[ hook, d ] \ar[r] & tM/B \ar[ hook, d] \ar[r] & 0 \\
0 \ar[r] & B \ar[d] \ar[r] & M \ar[ two heads, d ]  \ar[r] & T \ar[ two heads, d] \ar[r] & 0 \\
& 0 \ar[r] & M/tM \ar[r, "\sim"] & M' \ar[r] & 0
\end{tikzcd}
\]
Since $ T $ is torsion, the map $ T \to M' $ must be zero, therefore $ M/tM = 0 $ and $ M $ is torsion.

Assume now that $ g $ is injective. Consider the following commutative diagram:
\[
\begin{tikzcd}
0 \ar[r] & 0 \ar[d] \ar[r] & tM \ar[ hook, d ] \ar[r, "\sim"] & T' \ar[ hook, d] \ar[r] & 0 \\
0 \ar[r] & B \ar[equal, d] \ar[r] & M \ar[ two heads, d ]  \ar[r] & T \ar[ two heads, d] \ar[ r] & 0 \\
0 \ar[r] & B \ar[r] & M/tM \ar[r] & M' \ar[r] & 0
\end{tikzcd}
\]
By (3) the module $ M' $ is both torsion and torsion-free, therefore $ M' = 0 $ and the sequence is split. In conclusion, $ B $ satisfies (3'). \\
$``(3') \implies (3)":$ Assume now $ B $ satisfies (3'). Take a short exact sequence $ 0 \to B \to F \to M \to 0 $, with $ F $ torsionfree. Consider the torsion, torsionfree sequence for $ M $ as above and take the pullback $ F \to M $ along $ tM \to M $ to obtain the commutative diagram:
\[
\begin{tikzcd}
0 \ar[r] & B \ar[r] \ar[equal,d] & M' \ar[hook,d] \ar[r] & tM \ar[hook,d] \ar[r] & 0 \\
0 \ar[r] & B \ar[r] & F \ar[r] & M \ar[r] & 0  
\end{tikzcd}
\]
Since $ M' $ is a submodule of $ F $ it is torsionfree. Now applying the contrapositive of (3') it follows that the sequence above splits, whence $ tM = 0 $ and $ M $ is torsionfree.
\end{proof}

\subsection{Simples in the heart}

Torsionfree, almost torsion modules for any torsion pair $ (\mathcal{T},\mathcal{F}) $ enjoy several orthogonality properties: these follow at once from their characterization as simple objects in a suitable abelian subcategory of $ D(R) $.

Such subcategory is the heart of the t-structure obtained from $ (\mathcal{T},\mathcal{F}) $ by the Happel-Reiten-Smal{\o} construction \cite{HRS-tilt}, which we recall here:

\begin{de}
Let $ (\mathcal{T}, \mathcal{ F }) $ be a torsion pair in a Grothendieck category $ \mathcal{G} $. The classes
\begin{align*}
\mathcal{D}^{\le -1} = & \left\lbrace X^\bullet \in \mathcal{D}^b(\Lambda)\ |\ H^0(X^\bullet) \in \mathcal{T},\ H^i(X^\bullet) = 0,\ \text{for all}\ i > 0   \right\rbrace \\
\mathcal{D}^{\ge 0} = & \left\lbrace X^\bullet \in \mathcal{D}^b(\Lambda) \ |\ H^{0}(X^\bullet) \in \mathcal{F},\ H^i(X^\bullet) = 0,\ \text{for all}\ i < 0   \right\rbrace
\end{align*}
form a t-structure $ (\mathcal{D}^{\le -1}, \mathcal{D}^{\ge 0} )  $ in $ D(\mathcal{G}) $. We denote the corresponding heart by $ \mathscr{H} = \mathcal{D}^{\le -1}[-1] \cap \mathcal{D}^{\ge 0} $.
\end{de}

\begin{rem}
\label{rem:tPairHeart}
Recall that the heart of a t-structure is always an abelian category. 

Moreover, the torsion pair $ (\mathcal{T}, \mathcal{F}) $ induces a torsion pair $ (\mathcal{F}, \mathcal{T}[-1]) $ in the heart $ \mathscr{H} $.
\end{rem}

\begin{thm}[\cite{simples}]
\label{thm:simpHeart}
Let $ (\mathcal{T},\mathcal{F} ) $ be a torsion pair in $ R-\mathrm{Mod} $. 
The simple objects in the HRS-heart $\mathscr{H}$ are precisely the objects $ S $ of the form $ S = F  $ with
$ F $ torsionfree, almost torsion, or $ S = T[-1] $ with $ T $ torsion, almost torsionfree.
\end{thm}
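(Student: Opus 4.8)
The plan is to prove both implications by exploiting that a torsionfree, almost torsion module for any torsion pair in $\Lambda-\mathrm{Mod}$ is automatically a brick: by Theorem \ref{thm:simpHeart} it is a simple object of the HRS-heart $\mathscr{H}$, and simple objects of an abelian category have a division endomorphism ring by Schur's lemma. Thus the whole statement reduces to locating torsionfree, almost torsion modules and deciding when they must be finite dimensional. Throughout I identify ``finitely generated'' with ``finite dimensional'', since $\Lambda$ is a finite dimensional algebra.

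First I would settle the implication ``every brick finitely generated $\Rightarrow\ \tau$-tilting finite'' in its contrapositive form: given an infinite dimensional brick $B$, I produce a non functorially finite torsion class in $\Lambda-\mathrm{mod}$, so that $\Lambda$ is $\tau$-tilting infinite by (TF3). To do this I realize $B$ as a simple object of the heart of some torsion pair $(\mathcal{T},\mathcal{F})$ in $\Lambda-\mathrm{Mod}$, i.e. as a torsionfree, almost torsion (or, dually, torsion, almost torsionfree) module; the natural candidate is the pair cogenerated by $B$, whose verification of conditions (1)--(3) of Definition \ref{de:TtF} uses only that $B$ is a brick. By the main result of Section 2 the class $\mathcal{F}$ is definable, so Theorem \ref{thm:CB-bij} lets me restrict to a torsion pair $(\mathbf{t},\mathbf{f})$ in $\Lambda-\mathrm{mod}$. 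If $\mathbf{t}$ were functorially finite, then the extension would be the unique one with both classes definable, hence of finite type, and every torsionfree, almost torsion module of $(\mathcal{T},\mathcal{F})$ would be a finite dimensional minimal extending module for $(\mathbf{t},\mathbf{f})$ --- contradicting that $B$ is infinite dimensional. Therefore $\mathbf{t}$ is not functorially finite and $\Lambda$ is $\tau$-tilting infinite.

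For the converse, ``$\tau$-tilting finite $\Rightarrow$ every brick finitely generated'', I would invoke the main proposition of Section 3: if $\Lambda$ is $\tau$-tilting infinite there is a proper torsion class $\mathbf{t}\subsetneq\Lambda-\mathrm{mod}$ which is not functorially finite and locally maximal in $\mathbf{tors}(\Lambda)$, i.e. no torsion class covers it. I extend $(\mathbf{t},\mathbf{f})$ to a torsion pair $(\mathcal{T},\mathcal{F})$ in $\Lambda-\mathrm{Mod}$ with $\mathcal{F}=\mathbf{t}^{\perp_0}$ definable via Theorem \ref{thm:CB-bij}, and consider the HRS-heart $\mathscr{H}$. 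The goal is to extract from $\mathscr{H}$ a simple object of torsionfree type, that is a torsionfree, almost torsion module $B$. Such a $B$ is a brick, and it cannot be finite dimensional: a finite dimensional torsionfree, almost torsion module for $(\mathcal{T},\mathcal{F})$ would restrict, exactly as in the previous paragraph, to a minimal extending module for $(\mathbf{t},\mathbf{f})$, which by the Barnard--Carroll--Zhu correspondence between covers and minimal extending modules would produce a cover of $\mathbf{t}$, contradicting local maximality. Hence $B$ is an infinite dimensional brick, as required.

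The main obstacle is precisely this existence step: showing that the heart of the locally maximal, non functorially finite torsion pair really contains a simple object of torsionfree type. General abelian categories need not have simple objects, so the hypotheses must be used decisively --- definability of $\mathcal{F}$ makes $\mathscr{H}$ a sufficiently well behaved (Grothendieck) category, non functorial finiteness of $\mathbf{t}$ is what forces the associated simple to be infinite dimensional, and local maximality is exactly the assertion that no finite dimensional simple of torsionfree type is available. I expect to produce $B$ as a simple quotient, inside $\mathscr{H}$, of the object attached to a cosilting module cogenerating $\mathcal{F}$, the delicate point being to guarantee that this simple is of torsionfree type $F$ rather than of the torsion type $T[-1]$ also permitted by Theorem \ref{thm:simpHeart}. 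The realization lemma used in the first direction, that every brick is a simple object of some heart, is a second and lighter technical point which I would handle through the torsion pair cogenerated by the brick.
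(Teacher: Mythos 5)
There is a fundamental mismatch here: your proposal is not a proof of Theorem \ref{thm:simpHeart} at all, but a sketch of the paper's \emph{main theorem} (the brick version of Auslander's theorem), with Theorem \ref{thm:simpHeart} taken as an input. Indeed your very first sentence invokes ``by Theorem \ref{thm:simpHeart} it is a simple object of the HRS-heart,'' so read as a proof of \ref{thm:simpHeart} the argument is circular, and nowhere do you address either inclusion of the actual statement: that a torsionfree, almost torsion module $F$ (resp.\ a shift $T[-1]$ of a torsion, almost torsionfree module) is simple in $\mathscr{H}$, nor the converse that every simple object of $\mathscr{H}$ has one of these two forms. Everything you do prove --- realizing a brick $B$ as torsion, almost torsionfree for $(\mathbf{T}(B), B^{\perp_0})$, the uniqueness of extensions of functorially finite torsion pairs, the use of locally maximal non functorially finite classes and Corollary \ref{cor:TfExist} --- reproduces the content of Lemmas \ref{lem:tTFinGen} and following in Section 3, not the theorem you were asked to prove. (Incidentally, even within that sketch the two directions are mislabelled: producing a non functorially finite torsion class from an infinite dimensional brick is the contrapositive of ``$\tau$-tilting finite $\Rightarrow$ all bricks finitely generated,'' not of the implication you name; and the pair in Lemma \ref{lem:tTFinGen} is the one \emph{generated} by $B$, with $B$ torsion, almost torsionfree, not the cogenerated pair with $B$ of torsionfree type.)

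For the record, the paper itself does not reprove Theorem \ref{thm:simpHeart}; it attributes it to \cite{simples} and points to \cite[Theorem 2.3.6]{RapaThesis} for a written proof. An actual argument would have to work inside the heart: first, by Remark \ref{rem:tPairHeart} every object $X \in \mathscr{H}$ sits in a short exact sequence $0 \to H^0(X) \to X \to H^1(X)[-1] \to 0$ coming from the torsion pair $(\mathcal{F}, \mathcal{T}[-1])$ of $\mathscr{H}$, so a simple object is either a module $F \in \mathcal{F}$ or a shift $T[-1]$ with $T \in \mathcal{T}$; second, one must compute kernels and cokernels in $\mathscr{H}$ (via truncations of cones in the derived category) to translate simplicity of $F$ in $\mathscr{H}$ into exactly conditions (1)--(3) of Definition \ref{de:TtF} --- for instance, a proper subobject of $F$ in $\mathscr{H}$ with quotient in $\mathcal{F}$ corresponds to a non-injective nonzero map to a torsionfree module, violating (2'), while a subobject of the form $T'[-1]$ corresponds to an extension datum governed by condition (3') --- and dually for $T[-1]$. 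None of this homological bookkeeping, which is the entire substance of the theorem, appears in your proposal.
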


For a proof of the result above, see \cite[Theorem 2.3.6]{RapaThesis}.

\begin{cor}
The collection of torsionfree, almost torsion objects for any torsion pair in $ R-\mathrm{Mod} $ is a semibrick, that is a set of pairwise orthogonal modules whose endomorphism ring is a division ring. The same holds for torsion, almost torsionfree modules.
\end{cor}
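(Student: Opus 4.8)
The plan is to derive the corollary entirely from Theorem~\ref{thm:simpHeart} together with Schur's lemma applied inside the abelian category $\mathscr{H}$. Recall that in any abelian category a nonzero morphism between simple objects is automatically an isomorphism; hence the endomorphism ring of a simple object is a division ring, and two non-isomorphic simple objects admit no nonzero morphism in either direction. Consequently the isomorphism classes of simple objects of $\mathscr{H}$ already form a semibrick inside $\mathscr{H}$, and the whole task reduces to transporting this statement back to $R-\mathrm{Mod}$ along the inclusions of $\mathcal{F}$ and $\mathcal{T}[-1]$ into the heart.

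To set up that transport, I would first record that the canonical functor $R-\mathrm{Mod} \to D(R)$ sending a module to the complex concentrated in degree $0$ is fully faithful, and that, with the conventions of the HRS-construction, an object $F \in \mathcal{F}$ lives in $\mathscr{H}$ as such a degree-$0$ complex while an object $T \in \mathcal{T}$ lives in $\mathscr{H}$ as the shift $T[-1]$. It then follows formally that
\[
\mathrm{Hom}_{\mathscr{H}}(F, F') = \mathrm{Hom}_{D(R)}(F, F') = \mathrm{Hom}_{R}(F, F')
\]
for $F, F' \in \mathcal{F}$, and likewise
\[
\mathrm{Hom}_{\mathscr{H}}(T[-1], T'[-1]) = \mathrm{Hom}_{D(R)}(T, T') = \mathrm{Hom}_{R}(T, T')
\]
for $T, T' \in \mathcal{T}$. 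In other words the embeddings $\mathcal{F} \hookrightarrow \mathscr{H}$ and $\mathcal{T} \hookrightarrow \mathscr{H}$ (the latter up to the shift) are fully faithful.

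With these identifications the corollary is immediate. By Theorem~\ref{thm:simpHeart} each torsionfree, almost torsion module $F$ is a simple object of $\mathscr{H}$, so its endomorphism ring, computed either in $\mathscr{H}$ or equivalently in $R-\mathrm{Mod}$, is a division ring; thus $F$ is a brick. If $F \not\cong F'$ are two such modules, then any nonzero $R$-module map $F \to F'$ would be a nonzero morphism between simple objects of $\mathscr{H}$, hence an isomorphism there; since a fully faithful functor reflects isomorphisms this would force $F \cong F'$, a contradiction. Therefore $\mathrm{Hom}_{R}(F, F') = 0$, and symmetrically $\mathrm{Hom}_{R}(F', F) = 0$, so the family is pairwise orthogonal and forms a semibrick. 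The statement for torsion, almost torsionfree modules is proved identically, this time using the simple objects of $\mathscr{H}$ of the form $T[-1]$.

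The only point demanding genuine care is the fully faithful comparison of $\mathrm{Hom}$-groups: one must verify that passing to the derived category creates no new morphisms between objects of $\mathcal{F}$, and that a morphism $T[-1] \to T'[-1]$ in $\mathscr{H}$ is nothing but a module homomorphism $T \to T'$. This rests on the standard facts that $\mathscr{H}$ inherits its $\mathrm{Hom}$-bifunctor from $D(R)$ and that $R-\mathrm{Mod} \hookrightarrow D(R)$ is fully faithful; I expect this to be the main—albeit mild—obstacle, with everything else being a formal invocation of Schur's lemma.
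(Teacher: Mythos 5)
Your proof is correct and is exactly the argument the paper intends: the corollary is stated as an immediate consequence of Theorem~\ref{thm:simpHeart}, with the orthogonality properties ``follow[ing] at once'' from the identification of torsionfree, almost torsion modules with simple objects in the heart, i.e.\ Schur's lemma in $\mathscr{H}$ transported back along the fully faithful embeddings of $\mathcal{F}$ and $\mathcal{T}[-1]$ into $D(R)$. Your explicit verification of the $\mathrm{Hom}$-group comparisons and of the reflection of isomorphisms simply spells out the details the paper leaves implicit.
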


We may also obtain some further orthogonality properties of torsionfree, almost torsion and torsion, almost torsionfree modules:

\begin{cor}
\label{cor:orthogonality}
Let $ (\mathcal{ T }, \mathcal{ F }) $ be a torsion pair in $ R-\mathrm{Mod} $. Let $ \mathcal{S} $ be the collection of all the torsion, almost torsionfree modules with respect to this pair. 

Then any torsionfree, almost torsion module is contained in $ \mathcal{S}^{\perp_1} \cap \mathcal{F} $.
\end{cor}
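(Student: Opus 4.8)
The plan is to recognize the required $\operatorname{Ext}^1$-vanishing as an orthogonality between simple objects of the HRS-heart $\mathscr{H}$ attached to $(\mathcal{T},\mathcal{F})$, and then to close by Schur's lemma. The membership in $\mathcal{F}$ costs nothing: condition (1) of Definition~\ref{de:TtF} already places every torsionfree, almost torsion module $F$ in $\mathcal{F}$. Thus the entire task reduces to proving that $\operatorname{Ext}^1_R(T,F)=0$ whenever $F$ is torsionfree, almost torsion and $T\in\mathcal{S}$ is torsion, almost torsionfree.

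First I would move the computation into the derived category $D(R)$. Using the identification of $\operatorname{Ext}^1$ with morphisms in $D(R)$ together with the shift isomorphism,
\[
\operatorname{Ext}^1_R(T,F)\;\cong\;\operatorname{Hom}_{D(R)}(T,F[1])\;\cong\;\operatorname{Hom}_{D(R)}(T[-1],F).
\]
The purpose of the last rewriting is that both arguments now lie in the heart $\mathscr{H}$: a cohomology computation shows that $F$ is concentrated in degree $0$ with $H^0(F)=F\in\mathcal{F}$, while $T[-1]$ is concentrated in degree $1$ with $H^1(T[-1])=T\in\mathcal{T}$, and both patterns match the description of the objects of $\mathscr{H}$ (cohomology in degrees $0$ and $1$, torsionfree in degree $0$ and torsion in degree $1$). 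Since $\mathscr{H}$ is a full subcategory of $D(R)$, this Hom group is exactly $\operatorname{Hom}_{\mathscr{H}}(T[-1],F)$.

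Now I would invoke Theorem~\ref{thm:simpHeart}, which identifies $F$ and $T[-1]$ as \emph{simple} objects of the abelian category $\mathscr{H}$. These two simples cannot be isomorphic, since their cohomologies are supported in different degrees, so no isomorphism in $D(R)$ can match them. Schur's lemma in $\mathscr{H}$ then forces $\operatorname{Hom}_{\mathscr{H}}(T[-1],F)=0$, hence $\operatorname{Ext}^1_R(T,F)=0$. As $T\in\mathcal{S}$ was arbitrary, $F\in\mathcal{S}^{\perp_1}$, and together with $F\in\mathcal{F}$ this yields $F\in\mathcal{S}^{\perp_1}\cap\mathcal{F}$.

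The step carrying the real weight is the identification $\operatorname{Ext}^1_R(T,F)=\operatorname{Hom}_{\mathscr{H}}(T[-1],F)$: one must check that the single shift places $T$ and $F$ in the adjacent degrees $1$ and $0$ that together fill the two-term window cut out by $\mathscr{H}$, and that Hom computed inside the heart agrees with Hom in $D(R)$. Once this bookkeeping is settled the orthogonality is automatic from simplicity, so all the care is concentrated in the derived-category translation rather than in any module-theoretic argument.
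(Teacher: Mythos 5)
Your proof is correct and is exactly the argument the paper intends: the corollary is stated without proof right after the remark that these orthogonality properties ``follow at once'' from the identification of torsionfree, almost torsion modules $F$ and shifts $T[-1]$ of torsion, almost torsionfree modules as the simple objects of the HRS-heart (Theorem \ref{thm:simpHeart}), and your translation $\operatorname{Ext}^1_R(T,F)\cong\operatorname{Hom}_{D(R)}(T[-1],F)=\operatorname{Hom}_{\mathscr{H}}(T[-1],F)=0$ via fullness of the heart and Schur's lemma is precisely that argument, carried out with the correct degree bookkeeping for the paper's convention $\mathscr{H}=\mathcal{D}^{\le -1}[-1]\cap\mathcal{D}^{\ge 0}$.
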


 We end this section by recalling the following result of Parra and Saor\'{i}n \cite{GrothendieckHearts} which we will use later:

\begin{thm}
\label{thm:GrotHeart}
Let $ \mathcal{G} $ be a Grothendieck category and $ (\mathcal{T}, \mathcal{F} ) $ a torsion pair in $ \mathcal{G} $. Then the heart associated with the HRS-tilt at $ (\mathcal{T}, \mathcal{F} ) $ is Grothendieck if and only if $ \mathcal{F} $ is definable.
\end{thm}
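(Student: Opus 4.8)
The plan is to reduce everything to the one non-formal axiom of a Grothendieck category, exactness of filtered colimits (AB5). I would first record the structural input, established by Colpi--Gregorio--Mantese and Parra--Saor\'in, that for any torsion pair in a Grothendieck category the HRS-heart $\mathscr{H}$ is automatically abelian, complete, cocomplete, and admits a generator (which one builds from a generator of $\mathcal{G}$ together with the torsion pair). Hence $\mathscr{H}$ is Grothendieck if and only if it is AB5. Since a torsionfree class is already closed under subobjects and products, for such a class definability reduces to closure under direct limits. The theorem therefore reduces to the statement that $\mathscr{H}$ is AB5 if and only if $\mathcal{F}$ is closed under direct limits in $\mathcal{G}$.

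The core is a computation of filtered colimits in $\mathscr{H}$. Given a filtered system $(X_i)$ in $\mathscr{H}$, I would represent its members by two-term complexes and form the degreewise colimit $C = \varinjlim_i X_i$ in the category of complexes over $\mathcal{G}$. Because $\mathcal{G}$ is AB5, filtered colimits are exact and commute with cohomology, so $H^{n}(C) = \varinjlim_i H^{n}(X_i)$; in particular $H^{0}(C) \in \mathcal{T}$, since a torsion class is closed under quotients, coproducts and extensions and hence under direct limits. The complex $C$ represents the homotopy colimit of the system, and the colimit of $(X_i)$ in $\mathscr{H}$ is $H^{0}_{\mathscr{H}}(C)$, the value at $C$ of the cohomological functor of the $t$-structure; carefully justifying this identification is the technical backbone and is where I would follow Parra--Saor\'in. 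Feeding the canonical truncation triangle into the long exact sequence of $H^{\bullet}_{\mathscr{H}}$ then gives $H^{-1}_{\mathscr{H}}(C) = t\bigl(\varinjlim_i H^{-1}(X_i)\bigr)$, which is precisely the obstruction to $C$ itself lying in $\mathscr{H}$.

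For the implication $(\Leftarrow)$, assume $\mathcal{F}$ is closed under direct limits. Then for every filtered system in $\mathscr{H}$ we have $H^{-1}(C) = \varinjlim_i H^{-1}(X_i) \in \mathcal{F}$ and $H^{0}(C) \in \mathcal{T}$, so $C$ already belongs to $\mathscr{H}$ and the colimit in $\mathscr{H}$ coincides with the degreewise colimit $C$. Filtered colimits in $\mathscr{H}$ are thus computed degreewise in $\mathcal{G}$, where they are exact; so $\mathscr{H}$ is AB5 and, by the first paragraph, Grothendieck.

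For the implication $(\Rightarrow)$, assume $\mathscr{H}$ is AB5 and let $(F_i)$ be a filtered system in $\mathcal{F}$ with $F = \varinjlim_i F_i$ in $\mathcal{G}$. Viewing each $F_i$ as the object $F_i[1]$ of $\mathscr{H}$, the computation above yields a degreewise colimit $C$ with $H^{0}_{\mathscr{H}}(C) = (F/tF)[1]$ and obstruction $H^{-1}_{\mathscr{H}}(C) = tF$. The whole point is that AB5 forces this obstruction to vanish: AB5 is exactly the assertion that, for every filtered system, the degreewise colimit $C$ already computes the colimit in $\mathscr{H}$, i.e. $H^{-1}_{\mathscr{H}}(C) = 0$. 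Granting this, $tF = 0$ and $F \in \mathcal{F}$, so $\mathcal{F}$ is closed under direct limits. The equivalence ``$\mathscr{H}$ is AB5 $\iff$ $H^{-1}_{\mathscr{H}}$ of every degreewise colimit vanishes'' is the main obstacle, and only the reverse half of it is formal: unlike in $(\Leftarrow)$, one cannot read the conclusion off from $\mathcal{G}$, because the comparison functor $H^{-1}$ is merely left exact and a priori does not commute with colimits. The argument must instead extract the vanishing from the exactness of $\varinjlim$ inside $\mathscr{H}$---e.g.\ by presenting the colimit through the telescope short exact sequence and comparing it with the degreewise truncation triangle relating $C$, $H^{0}_{\mathscr{H}}(C)$ and $H^{-1}_{\mathscr{H}}(C)[1]$, so that a nonzero torsion term $t\bigl(\varinjlim_i H^{-1}(X_i)\bigr)$ becomes an explicit failure of exactness. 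This homotopy-colimit analysis is carried out by Parra--Saor\'in \cite{GrothendieckHearts}, and I would reproduce it to complete the proof.
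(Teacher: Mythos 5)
You should first be aware that the paper contains no proof of Theorem \ref{thm:GrotHeart}: it is imported verbatim from Parra--Saor\'in \cite{GrothendieckHearts}, so your proposal can only be compared with their argument, which it does follow in outline. Your reduction of definability to closure under direct limits is correct (a torsionfree class is automatically closed under subobjects and products), and your colimit computation is the right engine: for a filtered system $(X_i)$ in $\mathscr{H}$ one has $\varinjlim_{\mathscr{H}} X_i \cong H^0_{\mathscr{H}}(C)$ with obstruction $H^{-1}_{\mathscr{H}}(C) = t\bigl(\varinjlim_i H^{-1}(X_i)\bigr)$, exactly as in \cite{GrothendieckHearts} (note only that your indexing differs from the paper's, whose heart sits in cohomological degrees $0,1$, so that $F \in \mathcal{F}$ enters as a stalk in degree $0$ rather than as $F[1]$; this is harmless). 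A minor technical caveat: ``form the degreewise colimit'' presupposes strictifying a filtered system in $\mathscr{H} \subseteq D(\mathcal{G})$ to an honest diagram of complexes, which is not free; it is cleaner to take $C$ to be the telescope homotopy colimit, for which $H^n(C) \cong \varinjlim H^n(X_i)$ and the identification of $\varinjlim_{\mathscr{H}}$ with $H^0_{\mathscr{H}}(C)$ follow because the aisle is closed under coproducts.

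The genuine gap is your opening ``structural input''. It is \emph{not} true --- and is not in Colpi--Gregorio--Mantese or Parra--Saor\'in --- that the HRS-heart of an arbitrary torsion pair in a Grothendieck category automatically admits a generator; what is automatic is only that $\mathscr{H}$ is abelian, complete and cocomplete (AB3 and AB3*). Whether hearts of such t-structures have generators is a delicate question, and for the HRS-tilt it is settled only \emph{by} this theorem, not before it. Consequently your reduction ``$\mathscr{H}$ Grothendieck $\iff$ $\mathscr{H}$ AB5'' is unavailable: that equivalence is part of the theorem's content. This breaks your $(\Leftarrow)$ direction, which correctly shows that closure of $\mathcal{F}$ under direct limits makes filtered colimits in $\mathscr{H}$ degreewise, hence $\mathscr{H}$ AB5, but then invokes the phantom generator to conclude Grothendieck. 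In \cite{GrothendieckHearts} the construction of a generator under the direct-limit hypothesis is a substantive, separate step --- delicate enough that the published proof of this theorem required a correcting addendum. (To see why the naive construction you gesture at fails: a morphism onto a torsion stalk $T$ is epic in $\mathscr{H}$ only if the corresponding map in $\mathcal{G}$ is epic \emph{with torsion kernel}, and quotients of copies of a generator of $\mathcal{G}$ do not obviously provide such covers, since $\mathcal{T}$ is not closed under subobjects.) In the $(\Rightarrow)$ direction your structure is sound --- Grothendieck trivially implies AB5, and you correctly identify that the whole weight falls on ``AB5 forces $t(\varinjlim F_i) = 0$'' --- but you defer precisely that implication to Parra--Saor\'in. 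So as written your text establishes neither implication in full: one half rests on a false automatic-generator claim, the other on an outsourced core lemma.
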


\subsection{The small and the large ones}

Since the definition of minimal extending modules for a torsion pair in $ R-$mod is identical to the definition of torsionfree, almost torsion modules for torsion pairs in $R-$Mod, we rightfully expect to have a relation between the two notions.

This is indeed the case:

\begin{prop}
\label{prop:finInf} Let $ R $ be a noetherian ring.
Let $ (\mathbf{ t }, \mathbf{ f }) $ be a torsion pair in $ R-$mod. $ ( \mathcal{T} = \underrightarrow{\lim}\, \mathbf{ t }, \mathcal{F} = \underrightarrow{\lim}\, \mathbf{ f } ) $ the corresponding torsion pair with definable torsionfree class.

Then the minimal extending  modules with respect to $ (\mathbf{ t }, \mathbf{ f }) $  are precisely the finitely generated torsionfree, almost torsion modules for $ (\mathcal{T}, \mathcal{F}) $. 

Moreover, all torsion, almost torsionfree modules for $ (\mathcal{T}, \mathcal{F}) $ are finitely generated and coincide with the minimal co-extending modules for $ (\mathbf{ t }, \mathbf{ f }) $.
\end{prop}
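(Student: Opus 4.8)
The plan is to transfer each defining condition across the Crawley--Boevey bijection of Theorem \ref{thm:CB-bij}, using throughout that $\mathcal{T}=\mathrm{Gen}(\mathbf{t})$, that $\mathcal{F}=\underrightarrow{\lim}\,\mathbf{f}$ is definable (so closed under direct limits and under submodules), and that $\mathbf{t}=\mathcal{T}\cap R\text{-}\mathrm{mod}$, $\mathbf{f}=\mathcal{F}\cap R\text{-}\mathrm{mod}$. Since $R$ is noetherian, submodules and quotients of finitely generated modules are finitely generated, which turns conditions (1) and (2) into purely ``small'' conditions: for finitely generated $B$ one has $B\in\mathcal{F}\iff B\in\mathbf{f}$, and as every proper quotient of $B$ is finitely generated, condition (2) for $(\mathcal{T},\mathcal{F})$ and for $(\mathbf{t},\mathbf{f})$ assert exactly the same thing. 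The implication from torsionfree, almost torsion to minimal extending is then immediate: a finitely generated torsionfree, almost torsion module satisfies (3) for \emph{all} $F\in\mathcal{F}$, hence in particular for the finite-dimensional $F\in\mathbf{f}$ appearing in the definition of a minimal extending module, and the resulting cokernel, being finitely generated and in $\mathcal{F}$, lies in $\mathbf{f}$.

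For the converse of the first assertion --- the first genuinely nontrivial point --- I would verify condition (3) in $R\text{-}\mathrm{Mod}$ for a minimal extending $B$. Given $0\to B\to F\to M\to 0$ with $F\in\mathcal{F}$, I would write $F$ as the directed union of its finitely generated submodules $F'$ containing the image of $B$. Each such $F'$ is finitely generated and torsionfree, so $F'\in\mathbf{f}$, and the restricted sequence $0\to B\to F'\to F'/B\to 0$ lies in $R\text{-}\mathrm{mod}$; condition (3) for the minimal extending module forces $F'/B\in\mathbf{f}$. Passing to the colimit gives $M=\underrightarrow{\lim}\,F'/B\in\mathcal{F}$, since $\mathcal{F}$ is closed under direct limits. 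This is exactly the step where definability of the torsionfree side is indispensable.

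For the second assertion I would first prove finiteness directly. If a torsion, almost torsionfree $C$ were not finitely generated, it would be the directed union of its proper finitely generated submodules; by the dual of (2) each of these lies in $\mathcal{F}$, whence $C\in\underrightarrow{\lim}\,\mathbf{f}=\mathcal{F}$, contradicting $0\neq C\in\mathcal{T}$ together with $\mathcal{T}\cap\mathcal{F}=0$. Once $C$ is finitely generated, the dual forms of (1) and (2) match on both sides by the restriction formulas and noetherianity, and the direction from $R\text{-}\mathrm{Mod}$ to $R\text{-}\mathrm{mod}$ of the dual of (3) is again automatic, as small short exact sequences are special cases of large ones.

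The main obstacle is the remaining direction of the second assertion: the dual of (3) in $R\text{-}\mathrm{Mod}$ for a minimal co-extending $C$. Here the argument of the first assertion cannot simply be dualized, because $\mathcal{T}=\mathrm{Gen}(\mathbf{t})$ is closed under quotients, coproducts and extensions but \emph{not} under products or submodules, so there is no inverse-limit dual to the colimit used above. Instead I would exploit that $C$ is finitely generated and that $\mathcal{T}$ is generated by $\mathbf{t}$. Given $0\to M\to T\xrightarrow{p}C\to 0$ with $T\in\mathcal{T}$, I would choose a finitely generated submodule $T_0\subseteq T$ with $T_0\in\mathbf{t}$ and $p(T_0)=C$, which is possible since $C$ has finitely many generators and $T$ is a quotient of a coproduct of modules in $\mathbf{t}$. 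Setting $M_0:=M\cap T_0$ yields $0\to M_0\to T_0\to C\to 0$ in $R\text{-}\mathrm{mod}$, so the dual of (3) gives $M_0\in\mathbf{t}$. The key device is then the identity $T=M+T_0$, valid because $p(T_0)=p(T)$: it gives $M/M_0=M/(M\cap T_0)\cong(M+T_0)/T_0=T/T_0\in\mathcal{T}$, and since $M_0\in\mathcal{T}$ as well, closure of $\mathcal{T}$ under extensions yields $M\in\mathcal{T}$, completing the verification.
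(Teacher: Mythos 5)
Your proof is correct, and at both non-trivial points it takes a genuinely different route from the paper's. For the converse of the first assertion, the paper works with the orthogonality description $\mathcal{F}=\mathbf{t}^{\perp_0}$ from Theorem \ref{thm:CB-bij}: given $0\to B\to F\to M\to 0$ with $F\in\mathcal{F}$ and a morphism $T\to M$ with $T\in\mathbf{t}$ (made injective by replacing $T$ with its image), it pulls back along $F\to M$ and invokes the reformulation (3') of Definition \ref{de:TtF}; the pullback is a finitely generated submodule of $F$, hence lies in $\mathbf{f}$, so the pulled-back sequence splits and $T=0$. You instead apply condition (3) directly, writing $F$ as the directed union of its finitely generated submodules containing the image of $B$ and using closure of $\mathcal{F}=\underrightarrow{\lim}\,\mathbf{f}$ under direct limits; this is more elementary and makes the role of definability explicit, whereas the paper's version converts membership in $\mathcal{F}$ into a Hom-vanishing check. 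Similarly, your finiteness argument for torsion, almost torsionfree modules (a non-finitely generated one would be the direct limit of its proper finitely generated submodules, hence in $\mathcal{F}$) replaces the Hom-orthogonality argument of Lemma \ref{lem:finTfT}, with the same substance. For the hard direction of the second assertion both proofs begin identically---write $T$ as a quotient of a coproduct of modules in $\mathbf{t}$ and extract a finite sub-coproduct whose composite onto $C$ is surjective---but then diverge: the paper forms the pullback $P$ of $T\to C\leftarrow\coprod_{I_0}U_i$, notes $P\in\mathcal{T}$ because the kernel of $\coprod_{I_0}U_i\to C$ is torsion by the minimal co-extending property, and uses the universal property of the pullback to split $0\to M\to P\to\coprod_{I_0}U_i\to 0$, exhibiting $M$ as a direct summand of a torsion module; you pass instead to the image $T_0\subseteq T$ of the finite sub-coproduct, apply the minimal co-extending property to $M_0=M\cap T_0$ (finitely generated by noetherianity), and finish with $M/M_0\cong T/T_0\in\mathcal{T}$ and extension-closure of $\mathcal{T}$. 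Your version trades the pullback diagram and splitting trick for the second isomorphism theorem; both arguments use noetherianity at exactly the same point (finite generation of the relevant kernel), and both are complete.
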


We will need the following lemma in the proof:

\begin{lem}
\label{lem:finTfT}
Let $ (\mathcal{ T }, \mathcal{ F }) $ be a torsion pair with $ \mathcal{F} \in \mathbf{Cosilt}(R) $. Then all the torsion, almost torsionfree modules are finitely generated. 

Over a finite dimensional algebra $\Lambda $, we can say dually that for a torsion class in $ \mathbf{Silt}(\Lambda) $ all torsionfree, almost torsion modules are finite dimensional.
\end{lem}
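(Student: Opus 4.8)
The plan is to argue directly from the closure properties built into the notion of a definable class, rather than routing through the heart. The only structural input I need about these modules is the dual of condition (2) in Definition \ref{de:TtF}: if $T$ is torsion, almost torsionfree for $(\mathcal{T},\mathcal{F})$, then $T\in\mathcal{T}$, $T\neq 0$, and every \emph{proper} submodule of $T$ lies in $\mathcal{F}$; dually, if $F$ is torsionfree, almost torsion, then $F\in\mathcal{F}$, $F\neq 0$, and every proper quotient of $F$ lies in $\mathcal{T}$. I will combine this with the elementary fact that any module is the directed union of its finitely generated submodules, together with the defining closure properties of a definable class (closure under direct limits for the first assertion, and under products and pure submodules for the second). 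I note that although these modules are exactly the simple objects of the HRS-heart, which is Grothendieck here by Theorem \ref{thm:GrotHeart}, the argument below avoids that machinery entirely.

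First I would prove the statement over the noetherian ring $R$. Let $T$ be torsion, almost torsionfree for $(\mathcal{T},\mathcal{F})$ with $\mathcal{F}\in\mathbf{Cosilt}(R)$, and suppose toward a contradiction that $T$ is \emph{not} finitely generated. Writing $T = \varinjlim_{i} T_i$ as the directed union of its finitely generated submodules $T_i$, the failure of finite generation forces every $T_i$ to be a proper submodule of $T$, so $T_i \in \mathcal{F}$ by the property recalled above. Since $\mathcal{F}$ is definable it is closed under direct limits, whence $T = \varinjlim_i T_i \in \mathcal{F}$. But $T\in\mathcal{T}$ and $T\neq 0$, contradicting $\mathcal{T}\cap\mathcal{F}=0$. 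Therefore $T$ is finitely generated.

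For the dual statement over a finite dimensional algebra $\Lambda$, let $\mathcal{T}\in\mathbf{Silt}(\Lambda)$ be a definable torsion class and let $F$ be torsionfree, almost torsion for $(\mathcal{T},\mathcal{F})$. Suppose $F$ is not finite dimensional. Then every finite dimensional quotient $F\twoheadrightarrow F/K$ has $K\neq 0$, hence is a proper quotient and lies in $\mathcal{T}$. By Lemma \ref{lem:pureSub}, $F$ is a pure submodule of the product of all its finite dimensional quotients; this product lies in $\mathcal{T}$ because $\mathcal{T}$ is closed under products, and then $F\in\mathcal{T}$ because $\mathcal{T}$ is closed under pure submodules. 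As before this yields $F\in\mathcal{T}\cap\mathcal{F}=0$, a contradiction, so $F$ is finite dimensional. Here Lemma \ref{lem:pureSub} plays the role, dual to "directed union of finitely generated submodules", of realizing $F$ through its finite dimensional quotients, which is exactly what makes the product/pure-submodule half of definability applicable.

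I do not expect a genuine obstacle; the one point requiring care is the case distinction that guarantees the approximating objects are all proper. It is precisely the failure of finite generation (respectively finite dimensionality) that lets me conclude every finitely generated submodule (respectively every finite dimensional quotient) is proper, and hence torsionfree (respectively torsion), so that the relevant closure operation of the definable class keeps the limit inside $\mathcal{F}$ (respectively the product inside $\mathcal{T}$). The content of the lemma is thus the clean interaction between the "almost torsion(free)" condition, which controls proper sub- and quotient objects, and definability, which governs exactly the direct-limit and product/pure-submodule constructions assembled from them.
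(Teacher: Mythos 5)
Your proof is correct, and it takes a noticeably different route from the paper's. The paper first invokes Theorem \ref{thm:CB-bij} to rewrite the pair as $(\mathrm{Gen}(\mathbf{t}),\,\mathbf{t}^{\perp_0})$ and then argues by Hom-orthogonality: if $T$ is torsion, almost torsionfree but not finitely generated, then the image of any map $U \to T$ with $U \in \mathbf{t}$ is a finitely generated (hence proper, hence torsionfree) submodule which is also a quotient of $U$ (hence torsion), so it vanishes; thus $T \in \mathbf{t}^{\perp_0} = \mathcal{F}$, a contradiction, and the second half is declared dual via Theorem \ref{thm:CB-dual}. You instead work directly from the closure axioms of definability: in the first half you realize $T$ as the direct limit of its proper finitely generated submodules, all lying in $\mathcal{F}$, and use closure of $\mathcal{F}$ under direct limits; in the second half you realize $F$ as a pure submodule of the product of its proper finite dimensional quotients via Lemma \ref{lem:pureSub} and use closure of $\mathcal{T}$ under products and pure submodules. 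The contradiction skeleton ($X \in \mathcal{T} \cap \mathcal{F} = 0$ with $X \neq 0$) is identical, but the mechanisms differ, and each has its advantages: your first half nowhere uses noetherianity or the Crawley-Boevey bijection, so it proves the statement for any definable torsionfree class over an arbitrary ring, and your second half makes the ``dual'' argument explicit rather than leaving it to the reader (in effect you inline the relevant portion of the proof of Theorem \ref{thm:CB-dual}, which itself rests on Lemma \ref{lem:pureSub}); the paper's version is shorter given that the bijection theorems are already established, and its description $\mathcal{F} = \mathbf{t}^{\perp_0}$ is the form actually reused later (e.g.\ in Proposition \ref{prop:finInf}). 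One cosmetic caveat: your parenthetical remark that the HRS-heart ``is Grothendieck here'' by Theorem \ref{thm:GrotHeart} is only justified in the first half, where $\mathcal{F}$ is definable; for $\mathcal{T} \in \mathbf{Silt}(\Lambda)$ the torsionfree class need not be definable. Since you never use this remark, it does not affect the proof.
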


\begin{proof}
By Theorem \ref{thm:CB-bij} the torsion pair can be written as $ ( \mathrm{Gen}(\mathbf{t}) , \mathbf{t}^{\perp_0} ) $ with $ \mathbf{t} $ torsion class in $ R-$mod. 

Assume $ T $ is torsion, almost torsionfree and $ T \not \in R-\mathrm{mod}$. By assumption, all proper submodules of $ T $, in particular all possible images of morphisms from a finitely generated module, must be torsionfree, whence $ T \in \mathbf{ t }^{ \perp_0 } $, since $ \mathbf{ t } $ is a torsion class. This yields a contradiction.

For the second case proceed dually using Theorem \ref{thm:CB-dual}.
\end{proof}

Now we can prove the proposition:

\begin{proof}
 It is clear that the finitely generated torsionfree, almost torsion modules for $ (\mathcal{T}, \mathcal{F}) $  are minimal extending for  $ (\mathbf{ t }, \mathbf{ f }) $ . 

So suppose that $ S $ is minimal extending. We have immediately $ S \in \mathcal{F} $ and that all its proper quotients are in $ \mathbf{ t } $, so in particular in $ \mathcal{T} $.

The only condition that we have to check is the last one: consider a short exact sequence
\[
\begin{tikzcd}
0 \ar[r] & S \ar[r,"f"] & F \ar[r] & M \ar[r] & 0
\end{tikzcd}
\]
with $ F \in \mathcal{ F } $. Recall that $ \mathcal{ F } = \mathbf{ t }^{\perp_0} $. Hence suppose we have a map $ T \to M $ with $ T \in \mathbf{ t } $ which we may assume w.l.o.g. injective; taking the pullback along $ F \to M $ we obtain the commutative diagram:
\[
\begin{tikzcd}
0 \ar[r] & S \ar[r] \ar[d,equals] & P \ar[r] \ar[d] & T \ar[r] \ar[d] & 0 \\
0 \ar[r] & S \ar[r,"f"] & F \ar[r] & M \ar[r] & 0
\end{tikzcd}
\]
in $ R-$mod, where by the minimal extending property (3') of $ S $, $ P $ must be torsion or the sequence must split. Since $ P $ is a submodule of $ F $ it must be torsionfree, so the sequence splits, and $ T = 0 $, proving that $ M \in \mathbf{ t }^{\perp_0} $.

The proof of the second statement is more involved, as the available description of $ \mathcal{T} $ is less practical to work with. 

We need to check that every minimal co-extending module is torsion, almost torsionfree. 

Let $ S $ be minimal co-extending. 
By definition $ S \in \mathbf{t} \subseteq \mathcal{T} $.

 The second property is immediately verified, as every proper submodule of $ S $ is an element of $ \mathbf{f} \subseteq \mathcal{F} $.

For the third property, consider a short exact sequence:
\[
\begin{tikzcd}
0 \ar[r] & K \ar[r] & T \ar[r, "f"] & S \ar[r] & 0
\end{tikzcd}
\]
with $ T \in \mathcal{ T } $. By Theorem \ref{thm:CB-bij} we can find a family $ \{U_i\}_{i \in I} $ of objects of $ \mathbf{t} $, with an epimorphism $ h : \coprod_I U_i \to T $. 

Since $ S $ is finitely generated, we can find a finite subset $ I_0 \subseteq I $ such that $ \tilde{f} = f \circ h \circ \iota_{I_0} $ is surjective (where $ \iota_{I_0} : \coprod_{I_0} U_i \to \coprod_I U_i $  ). 

By definition, $ \coprod_{I_0} U_i \in \mathbf{t} $, whence  $ K' = \ker \tilde{f} $ is a torsion module, as $ S $ is minimal co-extending.

Consider the following pullback diagram:

\[
\begin{tikzcd}
         &                              & 0  & 0    \\
0 \ar[r] & K \ar[r] \ar[d, equals] & T \ar[r, "f"] \ar[u] & S \ar[r] \ar[u] & 0 \\
0 \ar[r] & K \ar[r]  & P \ar[r] \ar[u] & \coprod_{I_0} U_i \ar[r] \ar[u,"\tilde{f}"] & 0 \\
0 \ar[r] & 0 \ar[r] \ar[u] & K' \ar[r, equals] \ar[u] & K' \ar[r] \ar[u] & 0 
\end{tikzcd}
\]
Notice that $ P \in \mathcal{ T } $ since $ T $ and $ K' $ are torsion modules. 

Then consider the map $ h \circ i_{I_0} : \coprod_{I_0} U_i \to T $, using the pull-back property we obtain the following commutative diagram:
\[
\begin{tikzcd}
   & T \ar[r, "f"]    & S \\
   & P \ar[u] \ar[r]  & \coprod_{I_0} U_i \ar[u,"\tilde{f}"'] \\
\coprod_{I_0} U_i \ar[uur, "h \circ i_{I_0}" ] \ar[ur,dotted,"n"] \ar[urr, equals]
\end{tikzcd}
\]
Thus the middle short exact sequence in the first diagram splits, hence $ K \in \mathcal{ T } $.
\qedhere
\end{proof}

\subsection{Torsionfree, almost torsion and functorially finite classes}
Starting from this section, we will always work over a finite dimensional algebra $ \Lambda $. 
Recall the following definition:

\begin{de}
A subcategory $ \mathbf{s} $ of $ \Lambda-\mathrm{mod} $ is \emph{functorially finite} if every module admits a left and right $ \mathbf{s}-$approximation. 
\end{de}

For a torsion pair $ (\mathbf{t}, \mathbf{f} ) $ in $ \Lambda-\mathrm{mod} $ we know by work of Smal{\o} \cite{tPairTilt} that  $ \mathbf{t} $ is functorially finite if and only if $ \mathbf{f} $ is functorially finite. 

Functorially finite torsion pairs are parametrized by support $ \tau-$tilting modules:

\begin{thm}[ {\cite[Theorem 2.7]{tau-orig} } ]
\label{thm:tauFF}
There is a bijection between functorially finite torsion classes and equivalence classes of support $ \tau-$tilting modules, which associates to any representative $ T $ the torsion class $ \widetilde{\mathbf{T}}(T) = \mathrm{Gen}(T) $.
\end{thm}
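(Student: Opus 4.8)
The plan is to prove that $T \mapsto \widetilde{\mathbf{T}}(T)=\mathrm{Gen}(T)$ is a well-defined bijection by producing an explicit inverse in terms of Ext-projective objects. The engine throughout is the Auslander--Reiten formula $\mathrm{Ext}^1(X,Y)\cong D\,\overline{\mathrm{Hom}}(Y,\tau X)$, which converts the rigidity condition ($\tau$1) into vanishing of $\mathrm{Ext}^1$. I would first record the reformulation that a module $M$ is $\tau$-rigid if and only if it is Ext-projective in $\mathrm{Gen}(M)$. The direction I use most is elementary: if $\mathrm{Hom}(M,\tau M)=0$ and $Y\in\mathrm{Gen}(M)$, an epimorphism $M^{n}\twoheadrightarrow Y$ gives an injection $\mathrm{Hom}(Y,\tau M)\hookrightarrow\mathrm{Hom}(M,\tau M)^{n}=0$ by left exactness, and the Auslander--Reiten formula then forces $\mathrm{Ext}^1(M,Y)=0$.

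\emph{Well-definedness.} For a support $\tau$-tilting module I may, after passing to $\Lambda/(e)$, assume it is a $\tau$-tilting module $M$, and note that $\mathrm{Gen}(M)$ is unchanged by this passage. The computation above gives $\mathrm{Gen}(M)\subseteq{}^{\perp_0}(\tau M)$, and ${}^{\perp_0}(\tau M)=\{X:\mathrm{Hom}(X,\tau M)=0\}$ is a torsion class. That $\mathrm{Gen}(M)$ is itself closed under extensions follows from Ext-projectivity: for $0\to A\to B\to C\to 0$ with $A,C\in\mathrm{Gen}(M)$ I lift an epimorphism $M^{n}\twoheadrightarrow C$ along $B\to C$ using $\mathrm{Ext}^1(M^{n},A)=0$, and combine it with a cover of $A$ to obtain an epimorphism onto $B$. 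Functorial finiteness of $\mathrm{Gen}(M)$ is the theorem of Auslander and Smal\o{} that $\mathrm{Gen}$ of a finite-dimensional module is functorially finite.

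\emph{Injectivity.} I recover $M$ from $\mathcal{T}=\mathrm{Gen}(M)$ as the direct sum of the indecomposable Ext-projective objects of $\mathcal{T}$. One inclusion is the Ext-projectivity just shown; conversely, if $X\in\mathcal{T}$ is Ext-projective then the epimorphism $M^{n}\twoheadrightarrow X$ splits, as $\mathrm{Ext}^1(X,-)$ vanishes on its kernel, so $X\in\mathrm{add}(M)$. Thus $\mathrm{add}(M)$ is determined by $\mathrm{Gen}(M)$, and two support $\tau$-tilting modules sharing an associated torsion class are equivalent.

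\emph{Surjectivity and the main obstacle.} Given a functorially finite torsion class $\mathcal{T}$ with torsion pair $(\mathcal{T},\mathcal{F})$, the theorem of Auslander and Smal\o{} furnishes a basic Ext-projective generator $P$ with $\mathcal{T}=\mathrm{Gen}(P)$, and the converse direction of the equivalence above shows $P$ is $\tau$-rigid. The hard part is the numerical condition ($\tau$2): one must promote $P$ from $\tau$-rigid to support $\tau$-tilting. I would read off the idempotent $e$ from the simple modules lying in $\mathcal{F}$ (equivalently, the indecomposable projectives not in $\mathcal{T}$), pass to $\Lambda/(e)$, and there show that the number of indecomposable summands of $P$ equals the number of simple $\Lambda/(e)$-modules. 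This is where the bulk of the work lies: I would use a Bongartz-type completion of $P$ together with the principle that a $\tau$-rigid module is $\tau$-tilting exactly when it is maximal $\tau$-rigid, invoking Ext-projectivity of $P$ in $\mathrm{Gen}(P)$ to argue that no indecomposable summand can be adjoined without strictly enlarging the torsion class --- equivalently, establishing the defining equality $\mathrm{Gen}(P)={}^{\perp_0}(\tau P)$ over $\Lambda/(e)$. Together with the preceding steps this gives the asserted bijection.
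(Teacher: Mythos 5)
The paper itself gives no proof of this statement: it is imported verbatim from Adachi--Iyama--Reiten, and your outline is essentially a reconstruction of their argument (Ext-projectivity via the Auslander--Reiten formula, Auslander--Smal\o{} for functorial finiteness and for the Ext-projective generator, Bongartz completion for the count). Within that outline, however, your injectivity step contains a genuine error. You claim that if $X \in \mathcal{T} = \mathrm{Gen}(M)$ is Ext-projective then the epimorphism $M^{n} \twoheadrightarrow X$ splits ``as $\mathrm{Ext}^1(X,-)$ vanishes on its kernel''. Ext-projectivity gives vanishing of $\mathrm{Ext}^1(X,-)$ only on $\mathcal{T}$, and the kernel of an epimorphism $M^n \twoheadrightarrow X$ need not lie in $\mathcal{T}$: kernels of maps between torsion modules are not torsion. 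In fact the conclusion itself is false for merely $\tau$-rigid $M$: over $kA_2$ (quiver $1 \to 2$) take $M = P_1$, the projective-injective of length two, which is $\tau$-rigid with $\mathrm{Gen}(M) = \mathrm{add}(P_1 \oplus S_1)$ a torsion class; $S_1$ is Ext-projective there, since $\mathrm{Ext}^1(S_1,P_1)=0$ ($P_1$ is injective) and $\mathrm{Ext}^1(S_1,S_1)=0$, yet the cover $P_1 \twoheadrightarrow S_1$ does not split, precisely because its kernel $S_2$ is not in $\mathrm{Gen}(M)$ and $\mathrm{Ext}^1(S_1,S_2) \ne 0$. So the identification $\mathrm{add}(M) = \{\text{Ext-projectives of } \mathrm{Gen}(M)\}$ cannot be obtained by your splitting argument; it is exactly here that being support $\tau$-tilting, rather than $\tau$-rigid, must enter.

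For support $\tau$-tilting $M$ the identification is true, but in AIR its proof consumes the same machinery you defer to surjectivity: given an Ext-projective $X \in \mathrm{Gen}(M)$, one checks that $M \oplus X$ is $\tau$-rigid --- $\mathrm{Hom}(X, \tau M) = 0$ because $X \in \mathrm{Gen}(M) \subseteq {}^{\perp_0}(\tau M)$, and $\mathrm{Hom}(M, \tau X) = 0$ by the Auslander--Smal\o{} criterion applied to $\mathrm{Ext}^1(X, \mathrm{Gen}(M)) = 0$ --- and then invokes the theorem that support $\tau$-tilting modules are precisely the maximal $\tau$-rigid ones (which rests on Bongartz completion) to conclude $X \in \mathrm{add}(M)$. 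So injectivity is not elementary, as your write-up suggests; it draws on the Theorem~2.10--2.12 input just as much as the counting condition does. A smaller gloss worth recording: closure of $\mathrm{Gen}(M)$ under extensions must be verified in $\Lambda$-$\mathrm{mod}$, not $\Lambda/(e)$-$\mathrm{mod}$; this works because $(e)^2 = (e)$ forces extensions of $\Lambda/(e)$-modules to again be $\Lambda/(e)$-modules, and because $\tau$-rigidity over $\Lambda/(e)$ transfers to $\tau$-rigidity over $\Lambda$ by AIR's comparison lemma for idempotent ideals. With the injectivity step rerouted through the maximality theorem, your plan does assemble into the standard proof.
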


Such classes admit a unique extension to $ \Lambda-\mathrm{Mod} $ as was already observed in  \cite[Proposition 5.3]{QvS}. We give a proof for completeness.

\begin{prop}[Vit\'{o}ria]
\label{prop:ffUniqueExt}
Let $ (\mathbf{t}, \mathbf{f} ) $ be a torsion pair in $ \Lambda-\mathrm{mod}$. Then the following statements are equivalent:

\begin{itemize}
\item[(1)] $ \mathbf{t} $ is functorially finite.
\item[(2)] There exists a torsion pair $ ( \mathcal{ T }, \mathcal{ F }) $ in $ \Lambda-\mathrm{Mod}$ extending $( \mathbf{t}, \mathbf{f} )$, such that both the torsion and the torsionfree classes are definable.
\item[(3)] There is a unique torsion pair extending $ ( \mathbf{t} , \mathbf{f} ) $ to $ \Lambda-$Mod.
\end{itemize}
\end{prop}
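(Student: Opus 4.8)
The plan is to organize everything around the two extremal extensions recorded in the Example above, namely (S) $= ({}^{\perp_0}\mathbf{f},\, \mathbf{F}(\mathbf{f}))$ and (C) $= (\mathbf{T}(\mathbf{t}),\, \mathbf{t}^{\perp_0})$, and to prove the equivalence $(2)\Leftrightarrow(3)$ directly before closing the loop with $(1)\Rightarrow(3)$ and the genuinely hard implication $(2)\Rightarrow(1)$. The first thing I would record is a \emph{sandwiching principle}: any torsion pair $(\mathcal{T}',\mathcal{F}')$ in $\Lambda-\mathrm{Mod}$ extending $(\mathbf{t},\mathbf{f})$ satisfies $\mathbf{t}\subseteq\mathcal{T}'$ and $\mathbf{f}\subseteq\mathcal{F}'$, whence $\mathbf{T}(\mathbf{t})\subseteq\mathcal{T}'\subseteq{}^{\perp_0}\mathbf{f}$ (the lower bound because $\mathbf{T}(\mathbf{t})$ is the smallest torsion class containing $\mathbf{t}$, the upper bound because $\mathcal{T}'={}^{\perp_0}\mathcal{F}'\subseteq{}^{\perp_0}\mathbf{f}$). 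Thus the extensions form an interval with least element (C) and greatest element (S), so condition (3) is equivalent to the single statement that (C) and (S) coincide.

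With this in hand, $(2)\Leftrightarrow(3)$ is formal and uses no silting input. By Theorem \ref{thm:CB-bij}, (C) is the only extension whose torsionfree class is definable; by Theorem \ref{thm:CB-dual}, (S) is the only extension whose torsion class is definable. Hence if some extension has both classes definable it must equal both (C) and (S), forcing them to coincide and, by the sandwiching principle, uniqueness; conversely, if the extension is unique then (C) and (S) coincide and this common pair is definable on both sides, being torsionfree-definable as (C) and torsion-definable as (S).

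Next I would prove $(1)\Rightarrow$ ``(C) $=$ (S)''. If $\mathbf{t}$ is functorially finite, then by Theorem \ref{thm:tauFF} we have $\mathbf{t}=\mathrm{Gen}(T)$ for a support $\tau$-tilting module $T$, which by the proposition of \cite{siltMod} is a finite-dimensional silting module. A short computation (a finitely generated quotient of a direct sum of copies of the finite-dimensional $T$ is already a quotient of a finite subsum) shows $\mathbf{T}(\mathbf{t})=\mathrm{Gen}(\mathbf{t})=\mathrm{Gen}(T)$ inside $\Lambda-\mathrm{Mod}$. Since the torsion class attached to a silting module is definable \cite{siltComm}, the torsion class of (C) is definable, so by the uniqueness in Theorem \ref{thm:CB-dual} the pairs (C) and (S) coincide. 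Together with the previous paragraph this yields $(1)\Rightarrow(3)\Rightarrow(2)$.

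The remaining implication $(2)\Rightarrow(1)$ is where the real work lies. Here we may assume (C) $=$ (S), so that $\mathcal{T}:=\mathbf{T}(\mathbf{t})={}^{\perp_0}\mathbf{f}=\varinjlim\mathbf{t}$ is at once a definable torsion class and has definable torsionfree class $\mathcal{F}=\mathbf{t}^{\perp_0}$. Writing $\mathcal{T}=\mathrm{Gen}(S)$ for a silting module $S$, it suffices by Theorem \ref{thm:tauFF} to prove that $S$ is finite-dimensional up to equivalence, for then $\mathbf{t}=\mathcal{T}\cap\Lambda-\mathrm{mod}$ is the $\mathrm{Gen}$ of a support $\tau$-tilting module and hence functorially finite. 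The hard part will be exactly this upgrade from ``$\varinjlim\mathbf{t}$ is closed under products (definable)'' to ``the associated silting module is finite-dimensional''. I expect to extract it by using \emph{both} definabilities simultaneously through the HRS-heart $\mathscr{H}$ of $(\mathcal{T},\mathcal{F})$: by Lemma \ref{lem:finTfT} the definability of $\mathcal{F}$ (resp. of $\mathcal{T}$) forces every torsionfree, almost torsion (resp. torsion, almost torsionfree) module to be finite-dimensional, so by Theorem \ref{thm:simpHeart} all simple objects of $\mathscr{H}$ are finite-dimensional; combined with the fact that $\mathscr{H}$ is Grothendieck (Theorem \ref{thm:GrotHeart}, as $\mathcal{F}$ is definable) and carries the induced torsion pair $(\mathcal{F},\mathcal{T}[-1])$ of Remark \ref{rem:tPairHeart}, this should pin down a finite-dimensional progenerator and thereby the finite-dimensionality of $S$. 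A more self-contained alternative would be to establish directly that, for a torsion pair in $\Lambda-\mathrm{mod}$, the limit closure $\varinjlim\mathbf{t}$ is definable if and only if $\mathbf{t}$ is functorially finite; the forward direction of this equivalence is precisely what the coincidence of (C) and (S) supplies.
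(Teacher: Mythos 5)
Your packaging of the formal implications is correct and is essentially the paper's own argument in a cleaner order: the sandwich principle (every extension has torsion class between $\mathbf{T}(\mathbf{t})$ and $ {}^{\perp_0}\mathbf{f} $), together with the fact that Theorem \ref{thm:CB-bij} makes (C) the unique extension with definable torsionfree class and Theorem \ref{thm:CB-dual} makes (S) the unique extension with definable torsion class, does give $(2)\Leftrightarrow(3)$, and your silting argument gives $(1)\Rightarrow$ ``(C) $=$ (S)'', hence $(1)\Rightarrow(2)$ and $(1)\Rightarrow(3)$. But the implication $(2)\Rightarrow(1)$ --- the only one with genuine content --- is not proved in your proposal; you yourself label it ``the hard part'' and describe a strategy you ``expect'' to work. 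That strategy has an unjustified core step: from ``all simple objects of the Grothendieck heart $\mathscr{H}$ are finite-dimensional'' (which does follow under hypothesis (2) from Lemma \ref{lem:finTfT} and Theorem \ref{thm:simpHeart}, although you have the two attributions in that lemma swapped --- definability of $\mathcal{F}$ controls the torsion, almost torsionfree modules and definability of $\mathcal{T}$ the torsionfree, almost torsion ones) you cannot conclude that $\mathscr{H}$ has a finite-dimensional progenerator. A Grothendieck category in general has no nonzero projective objects at all, and nothing in the paper, nor in the results you cite, manufactures one out of finiteness of the simples; and even granting a progenerator, transferring its finite-dimensionality to the silting module $S$ with $\mathrm{Gen}(S)=\mathcal{T}$ is a further nontrivial claim with no supplied proof. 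So the crux of the proposition is missing.

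What the paper does at this point is short but rests on an external theorem you never invoke: by Theorem \ref{thm:CB-bij}, definability of $\mathcal{F}$ forces $\mathcal{T}=\varinjlim\mathbf{t}$; then definability of $\mathcal{T}$ --- specifically its closure under direct products --- allows one to apply a result of Lenzing (see \cite[Corollary 3.4.37]{purity}): if the direct limit closure of $\mathbf{t}$ is closed under products, then $\mathbf{t}$ admits left approximations, i.e.\ is covariantly finite. Since any torsion class is automatically contravariantly finite (the torsion submodule provides right approximations), $\mathbf{t}$ is functorially finite. This is exactly the ``self-contained alternative'' you gesture at in your final sentence (``$\varinjlim\mathbf{t}$ definable if and only if $\mathbf{t}$ functorially finite''), but observing that it would suffice is not the same as proving it: to close the gap you should either prove this approximation-theoretic statement or cite Lenzing's result as the paper does.
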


\begin{proof}
"(1) $\implies$ (2)" : By Theorem \ref{thm:tauFF} we find a support $ \tau-$tilting module $ T $ generating $ \mathbf{t} $. The torsion pair $ ( \mathbf{T}(T), T^{\perp_0} ) $ in $ \Lambda-\mathrm{Mod} $ extends the original torsion pair and it is definable on both sides.

"(2) $\implies$ (1)" : By assumption we have a torsion pair $ ( \mathcal{ T }, \mathcal{ F }) $ extending $ (\mathbf{t}, \mathbf{f} ) $ to $ \Lambda-$Mod, such that both the torsion and the torsionfree classes are definable. Using Theorem \ref{thm:CB-bij}, we obtain that $ \mathcal{ T } = \underrightarrow{\lim}(\mathbf{t} ) $. 

 Then we can apply a result of Lenzing, see as a reference \cite[Corollary 3.4.37]{purity}, ensuring that the torsion class $ \mathbf{ t } $ has left  approximations, hence it is functorially finite (every torsion class provides right approximations).

"(2) $ \implies $ (3)" : Using Theorems \ref{thm:CB-bij} and \ref{thm:CB-dual} it follows that $ \mathcal{T} = {}^{\perp_0}\mathbf{f} $ and $ \mathcal{F} = \mathbf{t}^{\perp_0} $. In particular, this torsion pair is both the smallest and the largest torsion pair extending $ (\mathbf{t}, \mathbf{f}) $, hence the unique one.

"(3) $ \implies $ (2)" : Immediate, using the fact that the extension with definable torsion class and the one with definable torsionfree class must coincide.
\end{proof}

\begin{cor}
\label{cor:finDimTtF-TfT}
Let $ (\mathbf{t}, \mathbf{f}) $ be a functorially finite torsion pair in $ \Lambda-\mathrm{mod} $.

Then for its unique extension $ ( \mathcal{T}, \mathcal{F}) $ to $ \Lambda-\mathrm{Mod} $ all the torsionfree, almost torsion and torsion, almost torsionfree modules are finite dimensional. 
\end{cor}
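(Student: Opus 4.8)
The plan is to read the statement off directly from Proposition~\ref{prop:ffUniqueExt} together with the two halves of Lemma~\ref{lem:finTfT}, so that the work is essentially bookkeeping. First I would use that $(\mathbf{t},\mathbf{f})$ is functorially finite: by the implication ``(1) $\implies$ (2)'' of Proposition~\ref{prop:ffUniqueExt}, its unique extension $(\mathcal{T},\mathcal{F})$ to $\Lambda-\mathrm{Mod}$ has the property that \emph{both} the torsion class $\mathcal{T}$ and the torsionfree class $\mathcal{F}$ are definable; in the notation fixed earlier this says $\mathcal{T}\in\mathbf{Silt}(\Lambda)$ and $\mathcal{F}\in\mathbf{Cosilt}(\Lambda)$ simultaneously.

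Next I would feed each definability statement into the appropriate form of Lemma~\ref{lem:finTfT}. Since $\mathcal{F}$ is a definable torsionfree class, the first assertion of the lemma gives that every torsion, almost torsionfree module for $(\mathcal{T},\mathcal{F})$ is finitely generated, hence finite dimensional over the finite dimensional algebra $\Lambda$. Dually, since $\mathcal{T}$ is a definable torsion class, the second (dual) assertion of the lemma gives that every torsionfree, almost torsion module is finite dimensional. These two conclusions cover exactly the two families appearing in the statement, which completes the argument.

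The point I would emphasize is that there is no genuine obstacle here beyond having \emph{both} definability conditions available at once. For a general torsion pair in $\Lambda-\mathrm{mod}$ one only controls one side at a time (the (S) extension with definable torsion class, or the (C) extension with definable torsionfree class), and Lemma~\ref{lem:finTfT} then bounds only the torsion, almost torsionfree \emph{or} the torsionfree, almost torsion modules, but not both. Functorial finiteness is precisely the hypothesis that forces the (S) and (C) extensions to coincide into a single two-sided definable torsion pair, as recorded in Proposition~\ref{prop:ffUniqueExt}, and this is what permits applying both halves of the lemma to the same pair $(\mathcal{T},\mathcal{F})$. Thus the only real care needed is in citing the correct implication of Proposition~\ref{prop:ffUniqueExt} to secure simultaneous definability of $\mathcal{T}$ and $\mathcal{F}$.
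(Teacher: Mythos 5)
Your proof is correct and follows exactly the paper's own argument: Proposition~\ref{prop:ffUniqueExt} yields $\mathcal{T}\in\mathbf{Silt}(\Lambda)$ and $\mathcal{F}\in\mathbf{Cosilt}(\Lambda)$ for the unique extension, and the two halves of Lemma~\ref{lem:finTfT} then bound the torsion, almost torsionfree and torsionfree, almost torsion modules respectively. Your closing remark about functorial finiteness being what makes both definability conditions hold simultaneously is a fair gloss on why the corollary needs this hypothesis.
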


\begin{proof}
$ \mathcal{F}  \in \mathbf{Cosilt}(\Lambda) $ and $ \mathcal{T} \in \mathbf{Silt}(\Lambda) $ by Proposition \ref{prop:ffUniqueExt}. This yields the result by means of Lemma \ref{lem:finTfT}.
\end{proof}

\subsection{Torsionfree, almost torsion modules determine torsion pairs}

We want to prove that a definable torsionfree class $ \mathcal{F} $ in $ \Lambda-\mathrm{Mod} $ is uniquely determined by its torsionfree, almost torsion modules.

This is a consequence of the following lemma, ensuring the existence of finitely generated, even finitely presented, objects in the heart associated with $ \mathcal{F} $ and of the characterization of torsionfree, almost torsion modules as simple objects in the heart.

\begin{lem}[Parra-Saorìn-Virili]
\label{lem:finInHeart}
Any finite dimensional module in $ \mathcal{ F} $ is finitely presented when seen as an object in the heart $ \mathscr{H} $.
\end{lem}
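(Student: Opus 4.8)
The plan is to use the intrinsic characterisation of finitely presented objects in a Grothendieck category. Since $\mathcal{F}$ is definable, Theorem \ref{thm:GrotHeart} guarantees that $\mathscr{H}$ is a Grothendieck category, hence locally finitely presented, and there an object $X$ is finitely presented precisely when $\mathrm{Hom}_{\mathscr{H}}(X,-)$ commutes with direct limits. So for a finite dimensional $F \in \mathcal{F}$, regarded as the stalk complex concentrated in degree $0$, I would show that $\mathrm{Hom}_{\mathscr{H}}(F, \varinjlim Z_i) \cong \varinjlim \mathrm{Hom}_{\mathscr{H}}(F, Z_i)$ for every direct system $(Z_i)$ in $\mathscr{H}$, the colimit taken in $\mathscr{H}$. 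The first step is to push the Hom groups down to module level: by Remark \ref{rem:tPairHeart} the pair $(\mathcal{T},\mathcal{F})$ induces the torsion pair $(\mathcal{F}, \mathcal{T}[-1])$ in $\mathscr{H}$, in which $F$ is a torsion object, so every morphism $F \to Z$ factors through the torsion subobject of $Z$, namely $H^0(Z) \in \mathcal{F}$ sitting in degree $0$. This yields a natural isomorphism $\mathrm{Hom}_{\mathscr{H}}(F, Z) \cong \mathrm{Hom}_{\Lambda}(F, H^0(Z))$, where $H^0$ restricted to $\mathscr{H}$ is the torsion radical of $(\mathcal{F}, \mathcal{T}[-1])$. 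The claim thus becomes the identity $\mathrm{Hom}_{\Lambda}(F, H^0(\varinjlim Z_i)) \cong \varinjlim \mathrm{Hom}_{\Lambda}(F, H^0(Z_i))$.

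Two inputs then combine. On one side, $F$ is finite dimensional, hence finitely presented over the noetherian algebra $\Lambda$, so $\mathrm{Hom}_{\Lambda}(F,-)$ already commutes with direct limits of $\Lambda$-modules; it therefore suffices to prove that $H^0$ commutes with direct limits taken in $\mathscr{H}$, i.e.\ that $H^0(\varinjlim Z_i) \cong \varinjlim H^0(Z_i)$. On the other side, I would access the direct limit in $\mathscr{H}$ through the direct limit $W = \varinjlim Z_i$ of the underlying complexes in $D(\Lambda)$: being filtered it is exact, so $H^n(W) \cong \varinjlim H^n(Z_i)$, and in particular $H^0(W) = \varinjlim H^0(Z_i)$ lies in $\mathcal{F}$ exactly because $\mathcal{F}$ is closed under direct limits.

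The crux is comparing $W$ with $\varinjlim Z_i$. The two differ only in degree $1$: forming the colimit inside the heart replaces the colimit of the torsion modules $H^1(Z_i)$ by its torsion part, and the resulting discrepancy is a torsionfree module placed in degree $1$, against which $F$ in degree $0$ admits neither homomorphisms nor negative extensions in $D(\Lambda)$. Hence $\mathrm{Hom}_{\Lambda}(F,-)$ (equivalently $\mathrm{Hom}_{D(\Lambda)}(F,-)$) cannot see this correction, and $H^0(\varinjlim Z_i)$ is seen by $F$ as $\varinjlim H^0(Z_i)$. Making this precise is where definability of $\mathcal{F}$ re-enters, via the description of direct limits in the Grothendieck heart due to Parra--Saor\'{i}n--Virili, and it is the main obstacle: one must control that passing to the heart leaves the degree-$0$ cohomology unchanged as far as $F$ is concerned. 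Granting this, $\mathrm{Hom}_{\Lambda}(F, H^0(\varinjlim Z_i)) \cong \mathrm{Hom}_{\Lambda}(F, \varinjlim H^0(Z_i)) \cong \varinjlim \mathrm{Hom}_{\Lambda}(F, H^0(Z_i))$, and retracing the natural identifications shows the canonical comparison map is an isomorphism, so $F$ is finitely presented in $\mathscr{H}$.
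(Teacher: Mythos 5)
Your reduction is sound as far as it goes: $F$ is a torsion object for the torsion pair $(\mathcal{F},\mathcal{T}[-1])$ of $\mathscr{H}$, so $\operatorname{Hom}_{\mathscr{H}}(F,Z)\cong\operatorname{Hom}_{\Lambda}(F,H^{0}(Z))$ naturally in $Z\in\mathscr{H}$, and since a finite dimensional module is finitely presented over $\Lambda$, everything collapses to the single claim that $H^{0}$ carries direct limits computed in $\mathscr{H}$ to direct limits of modules. (Two minor slips along the way: a Grothendieck category need not be locally finitely presented --- whether $\mathscr{H}$ is, is exactly what \cite{finiteHearts} investigates --- but you only use the definition of a finitely presented object; and ``the direct limit of the underlying complexes'' must be a homotopy colimit, since filtered colimits do not exist in $D(\Lambda)$.) The genuine gap is that this single claim is the entire mathematical content of the lemma. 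The paper itself gives no proof: it quotes the statement from \cite[Corollary 4.3]{finiteHearts}. Your argument, at its crux, concedes that the comparison of the heart colimit with the levelwise one ``is the main obstacle'' and then grants it by invoking the Parra--Saor\'{i}n(--Virili) description of direct limits in the Grothendieck heart --- that is, by invoking the result you were asked to prove. As written, the proof is circular.

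Moreover, the heuristic you offer to bridge the gap rests on a wrong picture of where the discrepancy lives. A torsion class is closed under direct limits (a direct limit is a quotient of a coproduct), so $\varinjlim H^{1}(Z_i)$ is automatically in $\mathcal{T}$; ``replacing it by its torsion part'' is the identity operation, and there is never a torsionfree correction in degree $1$. The correction, if any, sits in degree $0$: writing $\varinjlim_{\mathscr{H}}Z_i$ as the cokernel in $\mathscr{H}$ of $g\colon\coprod_{(i\to j)}Z_i\to\coprod_i Z_i$ and computing that cokernel as the HRS-truncation of $\operatorname{cone}(g)$, one finds $H^{1}\bigl(\varinjlim_{\mathscr{H}}Z_i\bigr)=\varinjlim H^{1}(Z_i)$ but $H^{0}\bigl(\varinjlim_{\mathscr{H}}Z_i\bigr)=E/t(E)$, where $E=H^{0}(\operatorname{cone}(g))$ is an extension $0\to\varinjlim H^{0}(Z_i)\to E\to\ker(H^{1}(g))\to 0$. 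The possible excess over $\varinjlim H^{0}(Z_i)$ is thus a torsionfree module placed in degree $0$, arising from the kernel of the canonical presentation of $\varinjlim H^{1}(Z_i)$ --- and $F$, also sitting in degree $0$, can perfectly well admit nonzero homomorphisms to such a module, so the ``$F$ cannot see the correction'' argument fails exactly where it is needed. The claim you need (that this excess vanishes, i.e.\ that direct limits in $\mathscr{H}$ are computed levelwise when $\mathcal{F}$ is definable) is in fact true, but establishing it is the substance of \cite{GrothendieckHearts} and \cite{finiteHearts}; a self-contained proof of the lemma must supply that step rather than assume it.
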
 

\begin{rem}
The original result \cite[Corollary 4.3]{finiteHearts} is much stronger and gives a complete description of finitely presented objects in the torsion class $ \mathcal{F} $ in $ \mathscr{H} $ in terms of properties of the corresponding modules. 
\end{rem}

At this point, we can prove:

\begin{cor}
\label{cor:TfExist}
Consider a definable torsionfree class $ 0 \ne \mathcal{F} \in \mathbf{Cosilt}(\Lambda) $, then there exists some (not necessarily finite dimensional) torsionfree, almost torsion module.
\end{cor}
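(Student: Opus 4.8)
The plan is to realise the desired module as a simple object in the HRS-heart $\mathscr{H}$ attached to the torsion pair $(\mathcal{T}, \mathcal{F})$, where $\mathcal{T} = {}^{\perp_0}\mathcal{F}$. The whole strategy rests on combining the classification of simples in the heart (Theorem \ref{thm:simpHeart}) with the favourable finiteness behaviour of $\mathscr{H}$ that definability of $\mathcal{F}$ buys us.

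First I would locate a nonzero finite dimensional witness inside $\mathcal{F}$. Since $\mathcal{F}$ is definable, Theorem \ref{thm:CB-bij} (applicable as $\Lambda$ is noetherian) gives $\mathcal{F} = \underrightarrow{\lim}\, \mathbf{f}$ with $\mathbf{f} = \mathcal{F} \cap \Lambda\text{-mod}$. Were $\mathbf{f}$ zero we would get $\mathcal{F} = 0$, contrary to hypothesis; hence I may fix some $0 \ne F_0 \in \mathbf{f}$, a finite dimensional module lying in $\mathcal{F}$.

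Next I would pass to the heart and extract a simple quotient. By Theorem \ref{thm:GrotHeart}, definability of $\mathcal{F}$ guarantees that $\mathscr{H}$ is a Grothendieck category, and by Lemma \ref{lem:finInHeart} the object $F_0$, viewed in cohomological degree $0$, is finitely presented in $\mathscr{H}$, hence finitely generated and nonzero there. In a Grothendieck category every nonzero finitely generated object admits a maximal proper subobject: the sum of any chain of proper subobjects remains proper precisely by finite generation, so Zorn applies. Therefore $F_0$ surjects onto a simple object $S$ of $\mathscr{H}$.

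Finally I would identify $S$ as being of the torsionfree, almost torsion type. By Remark \ref{rem:tPairHeart} the pair $(\mathcal{F}, \mathcal{T}[-1])$ is a torsion pair in $\mathscr{H}$, and $F_0 \in \mathcal{F}$ lies in its torsion class; since torsion classes are closed under quotients, $S \in \mathcal{F}$. Now Theorem \ref{thm:simpHeart} says $S$ is either $F$ with $F$ torsionfree, almost torsion, or $T[-1]$ with $T$ torsion, almost torsionfree. The second kind lies in the torsionfree class $\mathcal{T}[-1]$ of $\mathscr{H}$, and an object that is at once torsion and torsionfree must vanish; as $S \ne 0$, it is of the first kind, yielding the claimed torsionfree, almost torsion module. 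I expect the main obstacle to be the transfer of finiteness from $\Lambda\text{-mod}$ into $\mathscr{H}$: it is exactly Lemma \ref{lem:finInHeart} together with the Grothendieck property of Theorem \ref{thm:GrotHeart} that turns the finite dimensional datum $F_0$ into an object admitting a genuine simple quotient inside the heart, and without either of these inputs the argument would not get off the ground.
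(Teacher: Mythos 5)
Your proposal is correct and follows essentially the same route as the paper's own proof: locate a nonzero finite dimensional module in $\mathcal{F}$ via Theorem \ref{thm:CB-bij}, use Lemma \ref{lem:finInHeart} and the Grothendieck property from Theorem \ref{thm:GrotHeart} to extract a simple quotient in the heart, and identify it via Theorem \ref{thm:simpHeart}. You merely spell out two steps the paper leaves implicit (the Zorn argument for a maximal subobject and the reason the simple must be of torsionfree, almost torsion type rather than of the form $T[-1]$), which is fine.
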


\begin{proof}
Since  $ 0 \ne \mathcal{F} $, by Theorem \ref{thm:CB-bij} there exists some non-zero finite dimensional module $ F \in \mathcal{F} $. $ F $ is a non-zero finitely generated object in $ \mathscr{H} $, which is a Grothendieck category by Theorem \ref{thm:GrotHeart}, whence $ F $ has a maximal subobject, giving rise to a simple quotient $ S $ which is an element of $ \mathcal{ F } $, as this is a torsion class in the heart (  see Remark \ref{rem:tPairHeart} ). 

By Theorem \ref{thm:simpHeart}, this simple object $ S $ corresponds to a torsionfree, almost torsion module for $ \mathcal{F} $.
\end{proof}

\begin{rem}
\label{rem:IvoCrit}
A different proof for the existence of torsionfree, almost torsion modules can be obtained using model theoretic arguments, related to the link with critical summands of cotilting modules \cite{slides}.
\end{rem}

Recall that a finite dimensional module $ S \ne 0 $ is simple with respect to a torsionfree class if it satisfies conditions (1) - (2) in definition \ref{de:TtF}.

 Notice that a simple module for some torsionfree class $ \mathbf{f} $ in $ \Lambda-\mathrm{mod} $ automatically satisfies conditions (1) - (2) for any extension of $ \mathbf{f} $ to $ \Lambda-\mathrm{Mod} $.

\begin{prop}
\label{prop:betaInj}
Let $ \mathcal{F} $, $ \mathcal{F}' $ be  definable torsionfree classes in $ \Lambda-\mathrm{Mod} $ such that the torsionfree, almost torsion modules for $ \mathcal{F} $ and $ \mathcal{F}'$ coincide. Then $ \mathcal{F} = \mathcal{F}'$.

Moreover, every finite dimensional simple object for $ \mathcal{F}$ embeds in some torsionfree, almost torsion module.
\end{prop}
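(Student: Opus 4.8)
The plan is to prove the second (``moreover'') assertion first, working inside the HRS-heart $\mathscr{H}$ attached to $(\mathcal{T},\mathcal{F})$, and then to deduce $\mathcal{F}=\mathcal{F}'$ from it. So let $S$ be a finite dimensional simple object for $\mathcal{F}$; thus $0\ne S\in\mathcal{F}\cap\Lambda-\mathrm{mod}$ and $S$ satisfies (1)--(2) of Definition \ref{de:TtF}. Since $\mathcal{F}$ is definable, $\mathscr{H}$ is a Grothendieck category by Theorem \ref{thm:GrotHeart}, and by Lemma \ref{lem:finInHeart} the module $S$ is a finitely presented, in particular finitely generated, object of $\mathscr{H}$. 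A nonzero finitely generated object of a Grothendieck category admits a maximal proper subobject (the proper subobjects are closed under directed unions, since by finite generation such a union cannot exhaust the object, so Zorn applies), and therefore $S$ has a simple quotient $\Sigma$ in $\mathscr{H}$.

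Next I would verify that $\Sigma$ is torsionfree, almost torsion and that the heart-epimorphism $S\twoheadrightarrow\Sigma$ descends to a module embedding. By Remark \ref{rem:tPairHeart}, $\mathcal{F}$ is the torsion class of the torsion pair $(\mathcal{F},\mathcal{T}[-1])$ in $\mathscr{H}$, hence closed under quotients; as $S\in\mathcal{F}$ this gives $\Sigma\in\mathcal{F}$. So $\Sigma$ is a simple object of $\mathscr{H}$ lying in $\mathcal{F}$, whence by Theorem \ref{thm:simpHeart} it is of the form ``$F$'', i.e.\ an honest torsionfree, almost torsion module (the alternative shape $T[-1]$ is excluded, lying in the torsionfree class $\mathcal{T}[-1]$). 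Because $S$ and $\Sigma$ are both modules concentrated in degree $0$, one has $\mathrm{Hom}_{\mathscr{H}}(S,\Sigma)=\mathrm{Hom}_{D(\Lambda)}(S,\Sigma)=\mathrm{Hom}_{\Lambda}(S,\Sigma)$, so our epimorphism is a \emph{nonzero} homomorphism of $\Lambda$-modules; since $S$ satisfies (2'), any such nonzero map into a module of $\mathcal{F}$ is a monomorphism, giving $S\hookrightarrow\Sigma$ and proving the second assertion.

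For the first assertion, definability lets me reduce to the finite dimensional parts: via Theorem \ref{thm:CB-bij} the classes $\mathcal{F},\mathcal{F}'$ are determined by $\mathbf{f}=\mathcal{F}\cap\Lambda-\mathrm{mod}$ and $\mathbf{f}'=\mathcal{F}'\cap\Lambda-\mathrm{mod}$, so it suffices to prove $\mathbf{f}=\mathbf{f}'$, and by symmetry only $\mathbf{f}\subseteq\mathbf{f}'$. Let $S$ be a simple object of $\mathbf{f}$; as recalled before the statement, $S$ is then a finite dimensional simple object for the extension $\mathcal{F}$, so the second assertion yields an embedding $S\hookrightarrow B$ into a torsionfree, almost torsion module $B$ for $\mathcal{F}$. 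By hypothesis $B$ is torsionfree, almost torsion for $\mathcal{F}'$ as well, so $B\in\mathcal{F}'$; as $\mathcal{F}'$ is closed under submodules and $S$ is finite dimensional, $S\in\mathbf{f}'$. Thus $\mathrm{sim}\,\mathbf{f}\subseteq\mathbf{f}'$. Invoking Enomoto's reconstruction of a torsionfree class from its simple objects \cite{monobrick}, namely $\mathbf{f}=\mathrm{Filt}(\mathrm{sim}\,\mathbf{f})$, together with the fact that the torsionfree class $\mathbf{f}'$ is extension-closed, I conclude $\mathbf{f}=\mathrm{Filt}(\mathrm{sim}\,\mathbf{f})\subseteq\mathbf{f}'$; the symmetric inclusion then gives $\mathbf{f}=\mathbf{f}'$ and hence $\mathcal{F}=\mathcal{F}'$.

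The hard part will be the passage from the heart-level epimorphism $S\twoheadrightarrow\Sigma$ to a genuine nonzero morphism of modules to which (2') applies: this rests on identifying the $\mathrm{Hom}$-groups for objects placed in degree $0$ and on knowing that $\Sigma$, being simple and in $\mathcal{F}$, is a module rather than a shift. The second structural input, used only for the reduction step, is Enomoto's theorem that a torsionfree class is recovered as the filtration (equivalently extension) closure of its simple objects, which is exactly what upgrades $\mathrm{sim}\,\mathbf{f}\subseteq\mathbf{f}'$ to $\mathbf{f}\subseteq\mathbf{f}'$.
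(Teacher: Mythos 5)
Your proof is correct and follows essentially the same route as the paper's: reduction to $\Lambda-\mathrm{mod}$ via Theorem \ref{thm:CB-bij}, producing a simple quotient in the Grothendieck heart of a simple object of the torsionfree class (Lemma \ref{lem:finInHeart}, Theorem \ref{thm:GrotHeart}), identifying it as a torsionfree, almost torsion module via Theorem \ref{thm:simpHeart}, upgrading the nonzero map to an embedding, and concluding with Enomoto's reconstruction of a torsionfree class from its simple objects. The only differences are organizational (you isolate the ``moreover'' statement first and deduce the main claim from it) and that you spell out details the paper leaves implicit, such as the identification $\mathrm{Hom}_{\mathscr{H}}(S,\Sigma)=\mathrm{Hom}_{\Lambda}(S,\Sigma)$ and the Zorn argument for maximal subobjects.
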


\begin{proof}

Since the torsionfree classes are definable, by Theorem \ref{thm:CB-bij}, it is enough to prove that $ \mathcal{F}\, \cap\, \Lambda-\mathrm{mod} =  \mathcal{F}' \cap \Lambda-\mathrm{mod} $. 

By \cite[Theorem 3.15]{monobrick}, we know that torsionfree classes in $ \Lambda-\mathrm{mod} $ are determined by their simple objects. 

Assume without loss of generality that $ \mathcal{F} \ne 0 $.

Let $ B \in \mathcal{F} \cap \Lambda-\mathrm{mod} $ be a simple object in the torsionfree class (for instance take any simple, in the usual sense, submodule of some nonzero module in $\mathcal{F} $ ). 

Then, by Lemma \ref{lem:finInHeart}, $ B $ is a finitely generated object in the heart associated with $ \mathcal{F} $, whence it admits a simple quotient $ S $.

In the heart, $ B $ is an object of the torsion class $ \mathcal{F} $, therefore $ S \in \mathcal{F}$. 

It follows, by Theorem \ref{thm:simpHeart}, that there is a torsionfree, almost torsion module $ S $ for $ \mathcal{F}$, together with a non-zero homomorphism $ f : B \to S $. 

Since $ B $ is simple in the torsionfree class, $ f $ must be a monomorphism, whence $ B $ is a submodule of $ S $ which is by assumption an element of $ \mathcal{F}' $.

 This procedure shows that all the simple objects in $ \mathcal{F} $  are contained in $ \mathcal{F}'$.

Whence $ \mathcal{F} \cap \Lambda-\mathrm{mod} \subseteq \mathcal{F}' \cap \Lambda-\mathrm{mod} $. The same argument works for the opposite inclusion.
\end{proof}

\begin{rem}
Minimal extending modules ( that is finite dimensional torsionfree, almost torsion modules ) do not provide enough information to determine a torsionfree class in $ \Lambda-\mathrm{mod} $ as already observed in \cite{wideTorsion}, \cite{monobrick}.

This shows that the role of infinite dimensional torsionfree, almost torsion modules is not negligible in the study of torsion pairs of $ \Lambda-\mathrm{mod} $.
\end{rem}

\section{The lattice of torsion classes}

There is a natural partial order on the collection of torsion classes $ \mathbf{tors}(\Lambda) $ of $ \Lambda-\mathrm{mod} $ given by inclusion. 

As shown in \cite{lattices} the resulting poset has the structure of a complete lattice and enjoys several nice lattice-theoretic properties. 

More explicitely, we have the following description of the meet and join of a set indexed family $ \{ \mathbf{t}_i \}_{i \in I} $ of torsion classes:

\[
\bigwedge_{I} \mathbf{t}_i := \bigcap_I \mathbf{t}_i \quad, \quad \bigvee \mathbf{t}_i := \widetilde{\mathbf{T}}\Big( \bigcup_I \mathbf{t}_i \Big)
\]

We recall also some basic lattice theoretic terminology:

\begin{de}
Let $ (L, \le ) $ be a poset, $ x, y \in L $: 
\begin{itemize}
\item[(1)] The \emph{interval} $ [ x, y ] $ is the poset supported by those $ z \in L $ with $ x \le z \le y $. Notice that if $ L $ is a (complete) lattice, any non-empty interval in $ L $ is a (complete) sublattice of $ L $.
\item[(2)] We say that $ y $ \emph{covers} $ x $ if $ x < y $ and for any $ z \in L $ such that $ x \le z \le y $, either $ z = x $ or $ z = y $.
\item[(3)] Let $ L $ be a complete lattice. Then $ x $ is \emph{completely meet irreducible} if whenever $ x = \bigwedge_{I} y_i $, with $ y_i \in L $, we must have $ x = y_j $ for some $ j \in I $. 

This condition can be restated as follows: there is a unique element $ x^\star $ covering $ x $, and for every $ y > x $ we have $ y \ge x^\star $.
\end{itemize}
\end{de}

We recall a result proven in \cite{minimIncl}, relating minimal extending modules with the covering relation in $ \mathbf{tors}(\Lambda) $. 

\begin{thm}[{\cite[Theorem 1.0.2]{minimIncl}}]
\label{thm:minExt}
Let $ \mathbf{ t } \in \mathbf{tors}(\Lambda) $, $ \mathcal{S} $ be a collection of representatives of the isoclasses of minimal extending modules for $\mathbf{t}$. 

Then the elements of $ \mathcal{S}$ are in bijection with torsion classes covering $ \mathbf{t} $.
\end{thm}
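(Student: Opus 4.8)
The plan is to translate the covering relation in $\mathbf{tors}(\Lambda)$ into a statement about simple objects in the heart of an HRS t-structure, for which Theorem~\ref{thm:simpHeart} already supplies the dictionary with torsionfree, almost torsion modules. Let $(\mathbf{t},\mathbf{f})$ be the torsion pair of $\Lambda\text{-mod}$ determined by $\mathbf{t}$, and let $(\mathcal{T},\mathcal{F}) = (\underrightarrow{\lim}\,\mathbf{t},\, \underrightarrow{\lim}\,\mathbf{f})$ be its extension with definable torsionfree class (Theorem~\ref{thm:CB-bij}). Write $\mathscr{H}$ for the heart of the associated HRS t-structure; by Theorem~\ref{thm:GrotHeart} it is a Grothendieck category, and by Remark~\ref{rem:tPairHeart} it carries a torsion pair whose torsion part is $\mathcal{F}$. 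I would first establish an isomorphism of lattices between the interval $[\mathbf{t},\Lambda\text{-mod}]$ of $\mathbf{tors}(\Lambda)$ and the interval $[0,\mathcal{F}]$ of torsion classes of $\mathscr{H}$ contained in $\mathcal{F}$. Under such an isomorphism the torsion classes covering $\mathbf{t}$ become the atoms of $[0,\mathcal{F}]$, i.e.\ the covers of the zero class, and the task reduces to identifying these atoms with minimal extending modules.

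For the lattice isomorphism I send a torsion class $\mathbf{t}'\supseteq\mathbf{t}$, with definable extension $\mathcal{T}'\supseteq\mathcal{T}$ via Theorem~\ref{thm:CB-bij}, to the subcategory $\mathcal{T}'\cap\mathcal{F}$ of $\mathscr{H}$. The key computation is that a short exact sequence of $\mathscr{H}$ all of whose terms lie in $\mathcal{F}$ is precisely an ordinary short exact sequence of modules: running the cohomology long exact sequence of the defining triangle, the vanishing of $H^{-1}$ on the outer terms forces the heart-kernel and heart-cokernel to be honest modules in $\mathcal{F}$. Since $\mathcal{F}$ is the torsion part of $\mathscr{H}$, it is closed under heart-quotients and heart-extensions; combined with the previous remark and the fact that $\mathcal{T}'$ is closed under module quotients and extensions, this shows $\mathcal{T}'\cap\mathcal{F}$ is a torsion class of $\mathscr{H}$. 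The inverse assignment takes a torsion class $\mathscr{S}\subseteq\mathcal{F}$ to $\{M : M/tM\in\mathscr{S}\}$, where $tM$ is the $\mathbf{t}$-torsion submodule; checking that this is a torsion class of $\Lambda\text{-Mod}$ extending $\mathbf{t}$, and that the two assignments are mutually inverse and monotone, is a routine diagram chase using that $t(-)$ is a radical. This yields the isomorphism of intervals, hence a bijection between covers of $\mathbf{t}$ and atoms of $[0,\mathcal{F}]$.

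It remains to match the atoms of $[0,\mathcal{F}]$ with the isoclasses of simple objects of $\mathscr{H}$ lying in $\mathcal{F}$, through $B\mapsto \mathbf{T}_{\mathscr{H}}(B)$, the smallest torsion class of $\mathscr{H}$ containing $B$ (which is contained in $\mathcal{F}$ since $\mathcal{F}$ is a torsion class of $\mathscr{H}$). Any nonzero torsion class $\mathscr{S}\subseteq\mathcal{F}$ contains a simple object: the corresponding $\mathbf{t}'$ strictly contains $\mathbf{t}$, so $\mathbf{t}'\cap\mathbf{f}$ contains a nonzero finite-dimensional module $X\in\mathcal{F}$, which is finitely presented in $\mathscr{H}$ by Lemma~\ref{lem:finInHeart}; as $\mathscr{H}$ is Grothendieck, $X$ has a maximal subobject, whose simple quotient $B$ lies in $\mathscr{S}$ and, being a module quotient of $X$, is finite-dimensional. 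Such a $B$ is unique in $\mathbf{T}_{\mathscr{H}}(B)$ up to isomorphism: for any $Y$ with $\mathrm{Hom}(B,Y)=0$, the class $\{X : \mathrm{Hom}(X,Y)=0\}$ is closed under quotients, extensions and coproducts and contains $B$, hence contains $\mathbf{T}_{\mathscr{H}}(B)$, so the torsionfree class associated with $\mathbf{T}_{\mathscr{H}}(B)$ is exactly $\{Y : \mathrm{Hom}(B,Y)=0\}$; therefore any nonzero simple $B'\in\mathbf{T}_{\mathscr{H}}(B)$ admits a nonzero map $B\to B'$, and the semibrick property of torsionfree, almost torsion objects (the corollary following Theorem~\ref{thm:simpHeart}) forces $B'\cong B$. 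Consequently each $\mathbf{T}_{\mathscr{H}}(B)$ is an atom and every atom equals $\mathbf{T}_{\mathscr{H}}(B)$ for its unique simple object, giving the bijection. Finally Theorem~\ref{thm:simpHeart} identifies these finite-dimensional simple objects with finite-dimensional torsionfree, almost torsion modules, which by Proposition~\ref{prop:finInf} are exactly the minimal extending modules for $(\mathbf{t},\mathbf{f})$; composing the bijections proves the theorem.

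The step I expect to be most delicate is the lattice isomorphism of the first two paragraphs: one must use the large, definable extension to guarantee that $\mathscr{H}$ is Grothendieck (so that simple quotients exist), while the objects of interest are finite-dimensional, so care is needed to keep the correspondence between $\Lambda\text{-mod}$ and $\Lambda\text{-Mod}$ under control via Theorem~\ref{thm:CB-bij} and Proposition~\ref{prop:finInf}. The technical core is the identification of heart-exact sequences between torsionfree modules with genuine module exact sequences, together with its compatibility with the torsion radical; once this is in place, the existence and uniqueness of the simple object in an atom follow cleanly from the Grothendieck property and from the Hom-orthogonality of the semibrick of torsionfree, almost torsion modules.
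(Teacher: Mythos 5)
Your proposal does not match a proof in the paper, because the paper offers none: Theorem \ref{thm:minExt} is quoted from \cite{minimIncl}, where it is proved by a direct module-theoretic argument inside $\Lambda-\mathrm{mod}$ (from a minimal extending module $M$ one explicitly describes the covering class as the modules $X$ with $X/tX$ a direct sum of copies of $M$, and from a cover $\mathbf{t}'$ one extracts a minimal extending module as a suitable brick quotient of $X/tX$ for $X \in \mathbf{t}'\setminus\mathbf{t}$). Your route through the large HRS heart is genuinely different, but it has a fatal gap: the claimed lattice isomorphism between the interval $[\mathbf{t}, \Lambda-\mathrm{mod}]$ of $\mathbf{tors}(\Lambda)$ and the interval $[0,\mathcal{F}]$ of torsion classes of $\mathscr{H}$ is false. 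Your two assignments are not mutually inverse. The assignment $\mathscr{S} \mapsto \{M\,:\, M/tM \in \mathscr{S}\}$ puts $[0,\mathcal{F}]$ in bijection with torsion classes of the \emph{large} category $\Lambda-\mathrm{Mod}$ containing $\mathcal{T}$ (any such $\mathcal{T}'$ is recovered from $\mathcal{T}'\cap\mathcal{F}$, since $M \in \mathcal{T}'$ iff $M/tM \in \mathcal{T}'\cap\mathcal{F}$), and by Proposition \ref{prop:ffUniqueExt} restriction to $\Lambda-\mathrm{mod}$ is injective on these only in the functorially finite case --- precisely the case excluded by the interesting instances of the theorem. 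For a non functorially finite $\mathbf{t}' \supseteq \mathbf{t}$, the two extensions (S) and (C) are distinct large torsion classes containing $\mathcal{T}$ with the same restriction, hence give distinct elements of $[0,\mathcal{F}]$; so covers of $\mathbf{t}$ do not correspond to atoms of $[0,\mathcal{F}]$, and the reduction collapses.

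The downstream claim also fails: it is not true that every nonzero heart torsion class $\mathscr{S} \subseteq \mathcal{F}$ contains a finite-dimensional simple object, and your inference ``the corresponding $\mathbf{t}'$ strictly contains $\mathbf{t}$'' is exactly where this breaks --- the restriction of the large torsion class attached to a nonzero $\mathscr{S}$ can equal $\mathbf{t}$. The paper's own Kronecker example is a counterexample: for $\mathbf{t} = \widetilde{\mathbf{T}}(\mathbf{r})$ there are no covering classes and no minimal extending modules, yet the generic module $G$ is simple in $\mathscr{H}$ and lies in $\mathcal{F}$, so $\mathbf{T}_{\mathscr{H}}(G)$ is a nonzero element of $[0,\mathcal{F}]$ containing no nonzero finite-dimensional module (by your own --- correct --- observation that heart-exact sequences with all terms in $\mathcal{F}$ are module-exact, a finite-dimensional member would have $G$ as a module quotient, which is absurd). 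This discrepancy between simples in the large heart and covers in $\mathbf{tors}(\Lambda)$ is not a repairable slip in your framework; it is the very phenomenon the paper exploits to produce infinite-dimensional bricks for $\tau$-tilting infinite algebras. Nor can you retreat to the small HRS heart in $\mathcal{D}^b(\Lambda-\mathrm{mod})$: there the Grothendieck/maximal-subobject argument for existence of simple quotients is unavailable, and indeed in the Kronecker example the torsion class $\mathbf{f}$ of that heart has no simple objects at all. Your verifications that $\mathcal{T}'\cap\mathcal{F}$ is a torsion class of $\mathscr{H}$ and the uniqueness of the simple generator of $\mathbf{T}_{\mathscr{H}}(B)$ are sound, but a correct proof of the theorem must work directly in $\Lambda-\mathrm{mod}$ as in \cite{minimIncl}.
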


A typical phenomenon for the lattice of torsion pairs in the $ \tau-$tilting infinite case is the presence of non-trivial locally maximal elements.

\begin{de}
Let $ \mathbf{ t } \in \mathbf{tors}(\Lambda) $. We say that $  \mathbf{ t } $ is \emph{locally maximal} if there are no elements of $ \mathbf{ tors }(\Lambda) $ covering $ \mathbf{t} $. 
\end{de}

\begin{rem}
Any locally maximal torsion class is obtained as the meet of all the strictly larger torsion classes. In particular, such classes are never completely meet irreducible.

Also, notice that there is a unique functorially finite locally maximal element, namely the torsion class $ \Lambda-\mathrm{mod} $, which is by definition the meet of the empty set. 

In fact, for any functorially finite torsion pair $ \mathbf{t} $ properly contained in some other class $ \mathbf{u} $, it is possible, by means of mutation, to find a class $ \mathbf{t}^{\star} $ covering  $ \mathbf{t} $ such that $ \mathbf{t}^{\star} \le \mathbf{u} $. See \cite{g-vectors}.
\end{rem}

\begin{lem}
Let $ \mathbf{ t } \in \mathbf{ tors }(\Lambda) $ be a meet irreducible, but not completely meet irreducible element, then $ \mathbf{ t } $ is locally maximal.
\end{lem}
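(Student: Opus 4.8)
The statement is purely order-theoretic, so the plan is to argue entirely inside the complete lattice $\mathbf{tors}(\Lambda)$, by contradiction, using the restated characterization of complete meet irreducibility recorded just before the lemma. Concretely, I would assume that $\mathbf{t}$ is meet irreducible but not completely meet irreducible, suppose toward a contradiction that $\mathbf{t}$ is \emph{not} locally maximal, and extract an impossible meet decomposition of $\mathbf{t}$.

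First I would show that meet irreducibility forces $\mathbf{t}$ to have \emph{at most one} cover. Indeed, if $\mathbf{t}^\star_1 \ne \mathbf{t}^\star_2$ were two distinct covers, then $\mathbf{t} \le \mathbf{t}^\star_1 \wedge \mathbf{t}^\star_2 \le \mathbf{t}^\star_1$; since $\mathbf{t}^\star_1$ covers $\mathbf{t}$, this meet equals either $\mathbf{t}$ or $\mathbf{t}^\star_1$, and the value $\mathbf{t}^\star_1$ would give $\mathbf{t}^\star_1 \le \mathbf{t}^\star_2$, hence $\mathbf{t}^\star_1 = \mathbf{t}^\star_2$ by the covering relation, a contradiction. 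Thus $\mathbf{t}^\star_1 \wedge \mathbf{t}^\star_2 = \mathbf{t}$ with both factors strictly above $\mathbf{t}$, which contradicts meet irreducibility. Together with the standing assumption that $\mathbf{t}$ is not locally maximal, this produces a \emph{unique} cover $\mathbf{t}^\star$.

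Next I would invoke the hypothesis that $\mathbf{t}$ is not completely meet irreducible. By the restated characterization, complete meet irreducibility is the conjunction of two conditions: the existence of a unique cover $\mathbf{t}^\star$, and the property that every $\mathbf{u} > \mathbf{t}$ satisfies $\mathbf{u} \ge \mathbf{t}^\star$. The previous step has just secured the first condition, so the failure of complete meet irreducibility can only come from the second: there exists $\mathbf{u} \in \mathbf{tors}(\Lambda)$ with $\mathbf{u} > \mathbf{t}$ and $\mathbf{t}^\star \not\le \mathbf{u}$. Then $\mathbf{t} \le \mathbf{u} \wedge \mathbf{t}^\star \le \mathbf{t}^\star$, and since $\mathbf{t}^\star$ covers $\mathbf{t}$ this meet is either $\mathbf{t}$ or $\mathbf{t}^\star$; the value $\mathbf{t}^\star$ is ruled out by $\mathbf{t}^\star \not\le \mathbf{u}$, so $\mathbf{u} \wedge \mathbf{t}^\star = \mathbf{t}$. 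As both $\mathbf{u} > \mathbf{t}$ and $\mathbf{t}^\star > \mathbf{t}$, this once more exhibits $\mathbf{t}$ as a meet of two strictly larger elements, contradicting meet irreducibility. Hence the assumption that $\mathbf{t}$ admits a cover is untenable, and $\mathbf{t}$ is locally maximal.

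I do not expect a genuine obstacle: the argument uses only the lattice structure of $\mathbf{tors}(\Lambda)$ from \cite{lattices} and is independent of the brick labeling of Theorem \ref{thm:minExt}. The one point requiring care is the logical bookkeeping around the restated form of complete meet irreducibility; precisely, once the existence of a unique cover is established, "not completely meet irreducible" can only fail through the comparability condition (every $\mathbf{u} > \mathbf{t}$ lies above $\mathbf{t}^\star$), and it is exactly this failure that delivers the forbidden meet decomposition. Keeping track of which of the two clauses is violated is the whole content of the proof.
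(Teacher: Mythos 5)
Your proof is correct and follows essentially the same route as the paper's: assume a cover exists, use meet irreducibility to conclude it is unique, use the failure of complete meet irreducibility to produce $\mathbf{u} > \mathbf{t}$ with $\mathbf{u} \not\supseteq \mathbf{t}^\star$, and derive the forbidden decomposition $\mathbf{u} \wedge \mathbf{t}^\star = \mathbf{t}$. You merely spell out the cover-uniqueness and meet-computation steps that the paper leaves implicit, which is fine.
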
 

\begin{proof}
Assume by contradiction it has some covering class. By meet irreducibility it has precisely one, say $\mathbf{ t }^\star $. 

As $ \mathbf{t} $ is not completely meet-irreducible, but it has just one covering class, there must be some torsion class $ \mathbf{ u } \supsetneq \mathbf{ t } $ such that $ \mathbf{ u } \not \supseteq \mathbf{ t }^\star $. 

But this is absurd, since $ \mathbf{ u } \wedge \mathbf{ t }^\star = \mathbf{ t } $.
So $ \mathbf{ t } $ is locally maximal.
\end{proof}

\begin{ex}
We discuss the most common example of a torsion pair without minimal extending modules. Let $ k $ be an algebraically closed field.

Let $ \Lambda = kK_2 $ the Kronecker algebra, obtained as the path algebra of the quiver \begin{tikzcd}
0 \ar[r, shift left=2] \ar[r] & 1
\end{tikzcd}. 

This is a finite dimensional hereditary algebra, as such, we have that any indecomposable in $ \Lambda-\mathrm{mod} $ is contained in the preprojective $ \mathbf{p} $, regular $ \mathbf{r} $ or preinjective $ \mathbf{q} $ component of the AR-quiver. 

Recall that the additive closure of the regular component $ \mathbf{r} $ is a wide subcategory of $ \Lambda-\mathrm{mod} $ whose simple objects are called simple regular modules.

The torsion class generated by the modules in the regular component $ \widetilde{\mathbf{T}}(\mathbf{r}) $ contains all the regular and preinjective modules, but no preprojective module. 

The preprojective component contains a countable collection of bricks $ P_i $, such that $ \widetilde{\mathbf{T}}(P_i) \supset \widetilde{\mathbf{T}}(P_{i + 1}) $. Moreover, $ \bigcap \widetilde{\mathbf{T}}(P_i) = \widetilde{\mathbf{T}}(\mathbf{r} ) $. 

Any torsion class larger than $ \widetilde{\mathbf{T}}(\mathbf{r}) $ is of the form $ \widetilde{\mathbf{T}}(P_i) $ for some $ P_i $, so it follows that $ \widetilde{\mathbf{T}}(\mathbf{r}) $ is locally maximal. 

\begin{figure}
\centering
\begin{tikzpicture}[scale=0.70]
\fill [black!20, rounded corners] (2, -2) -- (2, 7) -- (15,5.5) -- (17.5,-2.2) -- cycle;
\fill [pattern=crosshatch,pattern color=black!35, rounded corners] (2, -2) -- (2, 7) -- (15,5.5) -- (17.5,-2.2) -- cycle;
\fill [black!50, rounded corners] (5.6, -2) -- (5.6, 6) -- (15,5) -- (17.2,-2) -- cycle;
\draw[->] (0,0) node[anchor=north east]{$P_0$} -- (1,1) node[anchor=south west]{$P_1$};
\draw[xshift=-2pt, yshift=3pt, ->] (0,0) -- (1,1);
\draw[->] (1.7,1) -- (2.5,0);
\draw[xshift=3pt, yshift=2pt, ->] (1.7,1) -- (2.5,0);
\draw[->] (3,0) node[anchor=north east]{$P_2$} -- (4,1) node[anchor=south west]{$P_3$};
\draw[xshift=-2pt, yshift=3pt, ->] (3,0) -- (4,1);
\draw (5,0.5) node{$\dots$};
\draw (6,-1) -- (6,5);
\draw (6.5,-1) -- (6.5, 5);
\draw (6.25, -1) ellipse(7pt and 2pt);
\draw (7,-1) -- (7,5);
\draw (7.5,-1) -- (7.5, 5);
\draw (7.25, -1) ellipse(7pt and 2pt);
\draw (8, 0.5) node{$\dots$};
\draw (8.5,-1) -- (8.5,5);
\draw (9,-1) -- (9, 5);
\draw (8.75, -1) ellipse(7pt and 2pt);
\draw (10, 0.5) node{$\dots$};
\draw[->] (11.7,1) -- (12.5,0);
\draw[xshift=3pt, yshift=2pt, ->] (11.7,1) -- (12.5,0);
\draw[->] (13,0) node[anchor=north east]{$Q_2$} -- (14,1) node[anchor=south west]{$Q_1$};
\draw[xshift=-2pt, yshift=3pt, ->] (13,0) -- (14,1);
\draw[->] (14.7,1) -- (15.5,0);
\draw[xshift=3pt, yshift=2pt, ->] (14.7,1) -- (15.5,0) node[anchor=north west]{$Q_0$} ;
\draw (3.8, 5.5) node{$\widetilde{\mathbf{T}}(P_2)$};
\draw (10, 4.5 ) node{$ \widetilde{\mathbf{T}}(\mathbf{r}) $};
\end{tikzpicture}
\caption{Torsion pairs in $ kK_2-\mathrm{mod} $ }
\end{figure}

Consider now the corresponding definable torsion pair in $ \Lambda-\mathrm{Mod} $. By Corollary \ref{cor:TfExist}, we know that there must be some torsionfree, almost torsion module for this torsion pair, which is necessarily infinite dimensional as the corresponding torsion class in $ \Lambda-\mathrm{mod} $ has no minimal extending modules. 

It is easy to compute the torsion, almost torsionfree modules for this torsion pairs: they are precisely the simple regular modules $ \{S_\lambda\}_{\Lambda} $. 

By Corollary \ref{cor:orthogonality}, it follows that the torsionfree, almost torsion modules must lie in the orthogonal category $ (\{ S_{\lambda}\}_{\Lambda})^{\perp_{0,1}} $ which is known to be equivalent to the module category $ k(X)-\mathrm{Mod} $. 

Such a subcategory contains a unique brick, up to isomorphism, since $ k(X) $ is a field, and this brick is the generic module $ G $ described by Ringel \cite{Ringel}. 

Since a torsionfree, almost torsion module must exist, we conclude that $ G $ is the unique torsionfree, almost torsion module for the extended torsion pair.
\end{ex}

\subsection{Locally maximal classes for $ \tau-$tilting infinite algebras}

We need some preparations to show the existence of non functorially finite locally maximal elements:

\begin{de}
Let $ L $ be a complete lattice. An element $ x \in L $ is \emph{compact} if for every set-indexed family $ \{y_i\}_{i \in I} $ such that 
$ x \le \bigvee_{i \in I} y_i $
there exists a finite subset $ J \subseteq I $ such that $ x \le \bigvee_{i \in J} y_i $. Dually we have the notion of a \emph{co-compact} element.
\end{de}

We will use the following observation ( the contrapositive of {\cite[Lemma 3.10]{g-vectors} } ). 

\begin{lem}[]
\label{lem:compactness}
Let $ \mathbf{ t } \in \mathbf{ tors }(\Lambda) $ be a functorially finite torsion class. Let $ \{ \mathbf{ t }_i \} $ be a chain of torsion classes indexed by some ordinal.

If $ \mathbf{ t } = \bigvee_i \mathbf{ t }_i $, then there exists some $ j $ such that $ \mathbf{t}_j = \mathbf{t} $.
\end{lem}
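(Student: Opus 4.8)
The plan is to reduce the statement to a single finitely generated module and then exploit that, for a chain, the join in $\mathbf{tors}(\Lambda)$ is just a union.

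First I would invoke functorial finiteness. By Theorem \ref{thm:tauFF} there is a support $\tau$-tilting module $T$ with $\mathbf{t} = \mathrm{Gen}(T)$, so $\mathbf{t}$ is generated by the single finite dimensional module $T$. It then suffices to find an index $j$ with $T \in \mathbf{t}_j$: indeed, $\mathbf{t}_j$ being a torsion class, $T \in \mathbf{t}_j$ forces $\mathbf{t} = \mathrm{Gen}(T) \subseteq \mathbf{t}_j$, while $\mathbf{t}_j \subseteq \bigvee_i \mathbf{t}_i = \mathbf{t}$ always holds, giving $\mathbf{t}_j = \mathbf{t}$. This turns the problem into a membership statement for $T$.

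The heart of the argument is the observation that the union $\mathbf{u} := \bigcup_i \mathbf{t}_i$ of a \emph{chain} of torsion classes is again a torsion class. Closure under quotients is immediate, since each $\mathbf{t}_i$ is closed under quotients. For closure under extensions, given $0 \to A \to B \to C \to 0$ with $A \in \mathbf{t}_i$ and $C \in \mathbf{t}_{i'}$, the chain hypothesis places both $A$ and $C$ in whichever of $\mathbf{t}_i, \mathbf{t}_{i'}$ is larger, and that class is closed under extensions, so $B \in \mathbf{u}$. Since in the length category $\Lambda-\mathrm{mod}$ closure under quotients and extensions characterizes torsion classes, $\mathbf{u}$ is a torsion class; being the smallest one containing every $\mathbf{t}_i$, it coincides with $\bigvee_i \mathbf{t}_i = \mathbf{t}$. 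Hence $T \in \mathbf{t} = \mathbf{u} = \bigcup_i \mathbf{t}_i$ produces the desired index $j$, and the reduction above concludes.

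The main obstacle is precisely the closure-under-extensions step, which is where the chain hypothesis is indispensable: for an arbitrary family the union need not be closed under extensions, and a functorially finite class can genuinely be a non-trivial join of incomparable smaller classes, so the chain assumption cannot be dropped. I would also flag that the finite generation of $T$ is what makes the whole argument compactness-like — the statement is really the assertion that functorially finite torsion classes behave as compact elements of $\mathbf{tors}(\Lambda)$ along chains. Beyond routine bookkeeping, the only genuinely categorical input is the characterization of torsion classes in $\Lambda-\mathrm{mod}$ by closure under quotients and extensions, which itself rests on the fact that every finite dimensional module has a largest submodule lying in such a subcategory.
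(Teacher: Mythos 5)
Your proof is correct, but it takes a different route from the paper. The paper's proof is a one-liner by citation: it invokes \cite[Proposition 3.2]{lattices}, which says that functorially finite torsion classes are compact (and co-compact) elements of $\mathbf{tors}(\Lambda)$, so the join of the chain is already attained by a finite subchain, whose maximum must then equal $\mathbf{t}$. You instead prove the needed chain-compactness from scratch: you use Theorem \ref{thm:tauFF} (already stated in the paper) to write $\mathbf{t} = \mathrm{Gen}(T)$ for a single finite dimensional module $T$, observe that the union of a chain of torsion classes is closed under quotients and extensions and is therefore itself a torsion class (so the join of a chain is just its union), and then locate $T$ in some $\mathbf{t}_j$. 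All three steps are sound: in $\Lambda-\mathrm{mod}$ closure under quotients and extensions does characterize torsion classes (finite direct sums are iterated extensions, and finite length gives the maximal torsion submodule), and $T \in \mathbf{t}_j$ indeed forces $\mathrm{Gen}(T) \subseteq \mathbf{t}_j \subseteq \mathbf{t}$. What your argument buys is self-containedness — it eliminates the external reference to \cite{lattices} and isolates exactly where functorial finiteness enters (a single finitely generated generator); what it gives up is generality: the cited compactness result handles arbitrary joins, not just chains, and the paper also leans on the dual co-compactness statement in the subsequent lemma (for meets of descending chains), which your argument as written does not cover and would require a separate dual treatment (e.g., via a cogenerator of the torsionfree class, intersections of chains of torsion classes being torsion classes).
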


\begin{proof}
Any functorially finite torsion class is both compact and co-compact \cite[Proposition 3.2]{lattices}, in particular, there exists a finite subchain $ \mathbf{ t }_{i_n} $ such that  $ \mathbf{ t } = \bigvee_n \mathbf{ t }_{i_n} $. Whence, $ \mathbf{t}_j = \mathbf{t} $ for some $ j $. 
\end{proof}

\begin{lem}
Let $ \mathbf{ u }_1, \mathbf{ u }_2 $ be functorially finite torsion classes in  $ \Lambda $.

 $ I = [ \mathbf{ u }_1, \mathbf{ u }_2 ] \subseteq \mathbf{ tors }(\Lambda) $ the corresponding interval. 

Then if $ I $ has not finite length, it contains a maximal and a minimal non functorially finite torsion class $ \mathbf{t}_{max} $ and $ \mathbf{ t }_{min} $. 

Moreover, the first class is meet irreducible in $ I $ but not completely meet irreducible, while the second class is join irreducible in $ I $ but not completely join irreducible.
\end{lem}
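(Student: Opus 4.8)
The plan is to realise $\mathbf{t}_{max}$ and $\mathbf{t}_{min}$ as a maximal and a minimal element of the set $N$ of non functorially finite torsion classes lying in $I$, and then to extract the irreducibility statements from the finiteness behaviour of functorially finite classes. First I would check that $N\neq\emptyset$. Since $I$ does not have finite length it contains an infinite chain, and every infinite chain contains a strictly monotone subchain of order type $\omega$. If $(\mathbf{v}_n)_n$ is strictly increasing, its join $\mathbf{v}=\bigvee_n\mathbf{v}_n$ lies in $I$ (a non-empty interval is a complete sublattice) and must be non functorially finite: were it functorially finite, Lemma~\ref{lem:compactness} would force $\mathbf{v}=\mathbf{v}_j$ for some $j$, contradicting $\mathbf{v}_{j+1}>\mathbf{v}_j$. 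Dually, a strictly decreasing subchain yields a non functorially finite meet via the co-compactness half of Lemma~\ref{lem:compactness}. In either case $N\neq\emptyset$.

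Next I would run Zorn's lemma on $N$. Given a chain $\mathcal{C}\subseteq N$, its join $\bigvee\mathcal{C}\in I$ is again in $N$: if it were functorially finite, compactness would give $\bigvee\mathcal{C}=\bigvee\mathcal{C}_0$ for a finite $\mathcal{C}_0\subseteq\mathcal{C}$, hence $\bigvee\mathcal{C}=\max\mathcal{C}_0\in\mathcal{C}$, which is absurd. Thus every chain in $N$ has an upper bound in $N$, so $N$ has a maximal element $\mathbf{t}_{max}$; the dual argument, using co-compactness, produces a minimal element $\mathbf{t}_{min}$. By construction every element of $I$ strictly above $\mathbf{t}_{max}$ is functorially finite, and dually below $\mathbf{t}_{min}$.

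It then remains to prove the irreducibility assertions for $\mathbf{t}_{max}$; those for $\mathbf{t}_{min}$ follow by the torsion/torsionfree duality (passing to $\Lambda^{op}$). For meet irreducibility, suppose $\mathbf{t}_{max}=\mathbf{a}\wedge\mathbf{b}$ with $\mathbf{a},\mathbf{b}>\mathbf{t}_{max}$ in $I$; by maximality $\mathbf{a}$ and $\mathbf{b}$ are functorially finite, hence so is $\mathbf{a}\wedge\mathbf{b}$, contradicting $\mathbf{t}_{max}\in N$. To see that $\mathbf{t}_{max}$ is not completely meet irreducible it suffices to show it has no cover. If $\mathbf{t}_{max}\lessdot\mathbf{c}$ were a cover, then $\mathbf{c}$ is functorially finite; since $I$ is an interval this is a cover in $\mathbf{tors}(\Lambda)$, and applying mutation of support $\tau$-tilting modules \cite{g-vectors} downward from the functorially finite class $\mathbf{c}$ strictly containing $\mathbf{t}_{max}$ produces a functorially finite co-cover $\mathbf{c}_\star\lessdot\mathbf{c}$ with $\mathbf{c}_\star\geq\mathbf{t}_{max}$. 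As $\mathbf{t}_{max}\lessdot\mathbf{c}$, this forces $\mathbf{c}_\star=\mathbf{t}_{max}$, making $\mathbf{t}_{max}$ functorially finite, a contradiction. Hence $\mathbf{t}_{max}$ is locally maximal, in particular not completely meet irreducible, which together with the previous lemma closes the argument.

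The step I expect to be the crux is the claim used in the meet-irreducibility argument, namely that a finite meet (resp. join) of functorially finite torsion classes is again functorially finite; this is what powers meet (resp. join) irreducibility. Purely order-theoretic input does not suffice here: the Kronecker example shows that a locally maximal class may sit at the infimum of an infinite descending chain of functorially finite classes with no cover of its own, so one genuinely needs this sublattice behaviour, to be extracted from the theory of support $\tau$-tilting modules \cite{g-vectors} and the lattice-theoretic results of \cite{lattices}. A secondary technical point, to be handled with care, is the passage from ``$I$ does not have finite length'' to the existence of a monotone $\omega$-subchain feeding into the compactness arguments.
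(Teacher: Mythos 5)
Your existence argument (non-emptiness of the set $N$ of non functorially finite classes in $I$, then Zorn's lemma powered by Lemma \ref{lem:compactness} and its dual) is exactly the paper's, and handling $\mathbf{t}_{min}$ by passing to $\Lambda^{op}$ is a legitimate substitute for the paper's ``dual arguments''. The genuine problem sits where you yourself locate it: meet irreducibility. You reduce it to the claim that the meet of two functorially finite torsion classes is again functorially finite, and you leave that claim ``to be extracted'' from \cite{g-vectors} and \cite{lattices}; neither reference states this meet-closure, so as written the central step is an assertion, not a proof. The claim is in fact true, but filling it requires genuinely representation-theoretic input beyond what you cite: one needs that the smallest torsion class $\widetilde{\mathbf{T}}(M)$ generated by a finite dimensional module is functorially finite (Auslander--Smal{\o}; see also \cite{wideLoc}), applied on the torsionfree side. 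Writing $\mathbf{s}_i$ for your $\mathbf{a},\mathbf{b}$ and $\mathbf{f}_i = \mathrm{Cogen}(F_i)$ with $F_i$ support $\tau^{-}$-tilting, the torsionfree class paired with $\mathbf{s}_1 \wedge \mathbf{s}_2$ is the smallest torsionfree class containing $F_1 \oplus F_2$; this is functorially finite by the dual of the Auslander--Smal{\o} result, and Smal{\o}'s theorem \cite{tPairTilt} then transfers functorial finiteness back to $\mathbf{s}_1 \cap \mathbf{s}_2$. Without this supplement your crux is a hole.

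It is worth seeing that the paper avoids this input altogether. It first builds, by iterating the mutation result of \cite{g-vectors} downward from $\mathbf{u}_2$ (the same result you invoke in your cover argument), an infinite strictly descending chain of functorially finite classes $\mathbf{t}_i$ whose meet is non functorially finite by co-compactness, hence equals $\mathbf{t}_{max}$ by maximality; this settles ``not completely meet irreducible''. Then, if $\mathbf{t}_{max} = \mathbf{s}_1 \wedge \mathbf{s}_2$ with both $\mathbf{s}_k$ functorially finite, co-compactness of each $\mathbf{s}_k$ (Proposition 3.2 of \cite{lattices}, the same fact you already use in the Zorn step) applied to $\mathbf{s}_k \ge \bigwedge_i \mathbf{t}_i$ produces a single index $j$ with $\mathbf{t}_j \le \mathbf{s}_1$ and $\mathbf{t}_j \le \mathbf{s}_2$, hence $\mathbf{t}_j \le \mathbf{t}_{max}$, contradicting $\mathbf{t}_j > \mathbf{t}_{max}$. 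So the sublattice property you flagged as indispensable is not needed at all. Two smaller points: your inference ``no cover, hence locally maximal'' overreaches, since your mutation argument only excludes covers lying in $I$, not covers in $\mathbf{tors}(\Lambda)$ -- though the conclusion you actually need (that $\mathbf{t}_{max}$ is the meet of the strictly larger elements of $I$, hence not completely meet irreducible) does follow from the absence of covers in $I$; and your cover-exclusion argument is sound, being essentially a one-step version of the paper's chain construction.
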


\begin{proof}
We denote by $ \mathbf{nftors}(\Lambda) \subset \mathbf{tors}(\Lambda) $ the poset of non functorially finite torsion classes.

By assumption, $ I $ must contain either an infinite strictly ascending chain or an infinite strictly decreasing chain. 

Since $ I $ is a complete sublattice, the join of the first chain, or the meet of the second one yields a non functorially finite class lying in $ I $ ( using compactness, or co-compactness of functorially finite classes ),  proving that the poset $ nI = I \cap \mathbf{nftors}(\Lambda) $ is not empty. 

This poset and its dual satisfy the hypotheses of Zorn's lemma, in fact for any chain in $ nI $ the join, or meet, of the chain in $ I $ is again a non-functorially finite torsion class by Lemma \ref{lem:compactness} and its dual, giving the required upper, or lower, bound.

So we conclude that $ nI $ has a maximal and a minimal element. 

Now, if $ \mathbf{ t } $ is such a maximal element, then starting with the obvious inclusion $ \mathbf{ t } \subsetneq \mathbf{u}_2 $ and applying inductively \cite[Theorem 3.1]{g-vectors} it is possible to construct an infinite descending chain of functorially finite torsion classes $ \mathbf{ t } \subseteq ... \subsetneq \mathbf{ t }_n \subsetneq \dots \subsetneq \mathbf{ t }_1 \subsetneq \mathbf{ t }_0 = \mathbf{u}_2 $.

Now, by co-compactness the meet of an infinite strictly descending chain is not functorially finite, whence we can conclude by maximality that $ \mathbf{ t } = \bigwedge_{ i \in \mathbb{N}} \mathbf{t}_i $, proving that it is not completely meet irreducible. 

Assume now that $ \mathbf{ t } = \mathbf{ s}_1 \wedge \mathbf{s}_2 $ , for some $ \mathbf{ s }_i \in I $.

By definition of meet, $ \mathbf{ t } \le \mathbf{ s }_i $, so if any of the two is not functorially finite, we must have equality, by maximality in $ nI $. 

So assume they are both functorially finite. By the argument above, $ \mathbf{ t } =  \bigwedge_{ i \in \mathbb{N}} \mathbf{t}_i $, but by co-compactness of $ \mathbf{s}_i $ there is some index $ j $, such that $ \mathbf{ t }_j \le \mathbf{s}_1, \mathbf{ s }_2 $, but this is a contradiction, since $ \mathbf{t}_j > \mathbf{t} $. So maximal non functorially finite elements are meet irreducible in $ I $.

Dual arguments yield the dual results.  
 \end{proof}

\begin{cor}
\label{cor:locMaxExists}
Let $ \Lambda $ be a not $ \tau-$tilting finite algebra, then there exists a maximal non functorially finite torsion class. Such torsion class is meet irreducible, but not completely meet irreducible, hence locally maximal. 
\end{cor}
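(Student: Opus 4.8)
The plan is to apply the preceding lemma to the whole lattice, realised as the interval $I = [\,\{0\},\ \Lambda-\mathrm{mod}\,]$ whose endpoints are the zero torsion class and $\Lambda-\mathrm{mod}$. Both are functorially finite: $\Lambda-\mathrm{mod} = \mathrm{Gen}(\Lambda)$ is generated by the $\tau$-tilting module $\Lambda$, and $\{0\} = \mathrm{Gen}(0)$ is the torsion class of the zero support $\tau$-tilting module. Thus $I = \mathbf{tors}(\Lambda)$ is an interval of the type required by the lemma, and since $I$ is the entire lattice, meet irreducibility (resp. complete meet irreducibility) in $I$ agrees with the same notion in $\mathbf{tors}(\Lambda)$.

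The only hypothesis of the lemma still to be checked is that $I = \mathbf{tors}(\Lambda)$ does not have finite length, and this is exactly where the assumption that $\Lambda$ is $\tau$-tilting infinite is used. By the equivalence (TF1)$\Leftrightarrow$(TF2), $\mathbf{tors}(\Lambda)$ is infinite. To upgrade infinitude to infinite length I would argue by contradiction: in a lattice of finite length every element is compact and co-compact (given $x \le \bigvee_{i} y_i$, the finite subjoins form a directed family which, having bounded height, attains its supremum at a finite stage, so $x$ already lies below a finite subjoin; the co-compact statement is dual). Since functorially finite torsion classes are precisely the compact and co-compact elements of $\mathbf{tors}(\Lambda)$ --- the forward implication being the one recorded in Lemma \ref{lem:compactness} --- finite length would force every torsion class to be functorially finite, contradicting (TF3). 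Equivalently, one may simply invoke the known fact that a $\tau$-tilting infinite algebra always admits an infinite chain of torsion classes.

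With both hypotheses in hand, the preceding lemma produces a maximal non functorially finite torsion class $\mathbf{t}_{max} \in \mathbf{tors}(\Lambda)$ that is meet irreducible but not completely meet irreducible. Feeding this into the earlier lemma --- which states that a meet irreducible, not completely meet irreducible element is locally maximal --- yields that $\mathbf{t}_{max}$ is locally maximal, which is the assertion of the corollary. I expect the one genuinely delicate point to be the infinite-length step: the clean half (finite length forces compactness and co-compactness) is elementary, but it leans on the converse characterization of functorial finiteness as compactness together with co-compactness, so I would either cite the full equivalence from \cite{lattices} or bypass it through the infinite-chain fact quoted above.
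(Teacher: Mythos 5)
Your proposal has the same skeleton as the paper's proof: apply the preceding interval lemma to $I=[0,\Lambda{-}\mathrm{mod}]$ (both endpoints functorially finite), obtain a maximal non functorially finite torsion class that is meet irreducible but not completely meet irreducible, and then invoke the earlier lemma to conclude local maximality; your remark that meet irreducibility in $I$ agrees with meet irreducibility in $\mathbf{tors}(\Lambda)$ because $I$ is the whole lattice is also implicitly used by the paper. The only divergence is how the infinite-length hypothesis is verified, and there your \emph{primary} argument has a genuine citation gap: the ``full equivalence'' you hope to quote from \cite{lattices} does not exist there. That reference (and Lemma \ref{lem:compactness} of this paper) records only the forward implication, namely that functorially finite torsion classes are compact and co-compact; the converse (compact and co-compact implies functorially finite) is not established in \cite{lattices} nor anywhere in the paper's references, so ``finite length forces every element to be compact and co-compact'' (which is correct, via the ascending chain condition) does not by itself yield ``every torsion class is functorially finite.''

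Your fallback is the correct move and is exactly what the paper does: it cites \cite[Proposition 3.9]{g-vectors} for the fact that $[0,\Lambda{-}\mathrm{mod}]$ has infinite length when $\Lambda$ is $\tau$-tilting infinite. If you prefer an argument to a citation, you can repair your contradiction without the unproven converse by using mutation instead of compactness: by \cite{g-vectors}, the Hasse covers of a functorially finite torsion class are again functorially finite, so in a lattice of finite length every torsion class would be reachable from $0$ by a finite chain of covers through functorially finite classes and hence would itself be functorially finite, contradicting (TF3). With either repair, your proof is complete and structurally identical to the paper's.
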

 
\begin{proof}
Apply the Lemma above to the interval $ [0, \Lambda-\mathrm{mod} ] $ which has infinite length, see \cite[Proposition 3.9]{g-vectors}, to obtain a maximal element $ \mathbf{t} $ in $ \mathbf{nftors}(\Lambda) $ with the required properties. 
 \end{proof}
 
 \subsection{The main theorem}
 
 We need a last lemma before proceeding into the proof of the main theorem. This construction is already present in the literature, see \cite{minimIncl}, we give a proof for the convenience of the reader:
 
 \begin{lem}
 \label{lem:tTFinGen}
 Let $ B \in \Lambda-\mathrm{Mod} $ be a brick. Then $ B $ is the unique torsion, almost torsionfree module for the torsion pair $ ( \mathbf{T}(B), B^{\perp_0}) $. 
 \end{lem}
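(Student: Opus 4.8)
The plan is to verify directly that $B$ satisfies the three defining conditions of a torsion, almost torsionfree module for the pair $(\mathcal{T},\mathcal{F}) := (\mathbf{T}(B),\,B^{\perp_0})$, and then to read off uniqueness almost for free from the semibrick property. First I would record that $(\mathbf{T}(B),\,B^{\perp_0})$ really is a torsion pair: it is the torsion pair generated by $B$, and since $B$ generates $\mathbf{T}(B)$ one has $\mathbf{T}(B)^{\perp_0} = B^{\perp_0}$, so the torsionfree class is exactly $B^{\perp_0}$. The only property of $B$ I will use throughout is that $\operatorname{End}(B)$ is a division ring, hence every nonzero endomorphism of $B$ is an isomorphism.

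Next I would check the (dual) conditions (1)--(3) of Definition~\ref{de:TtF}. Condition (1), namely $B \in \mathcal{T}$, is immediate. For condition (2), that every proper submodule $B' \subsetneq B$ lies in $\mathcal{F} = B^{\perp_0}$: given $f \colon B \to B'$, composition with the inclusion $B' \hookrightarrow B$ is an endomorphism of $B$ whose image sits inside $B' \subsetneq B$, hence is not surjective; as $\operatorname{End}(B)$ is a division ring this composite must be zero, forcing $f = 0$, so $\operatorname{Hom}(B,B') = 0$. For the extension condition I would invoke the reformulation (3') (legitimate here by the dual of the Lemma following Definition~\ref{de:TtF}, since (1)--(2) already hold): given a non-split short exact sequence $0 \to F \to M \xrightarrow{\pi} B \to 0$ with $F \in \mathcal{F}$, I must show $\operatorname{Hom}(B,M) = 0$. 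Any $\phi \colon B \to M$ produces $\pi\phi \in \operatorname{End}(B)$; were $\pi\phi \ne 0$ it would be invertible and $\phi(\pi\phi)^{-1}$ a section of $\pi$, contradicting non-splitness. Hence $\pi\phi = 0$, so $\phi$ factors through $F$, and $\operatorname{Hom}(B,F) = 0$ gives $\phi = 0$. Thus $M \in \mathcal{F}$, completing the verification that $B$ is torsion, almost torsionfree.

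Uniqueness is then the cleanest step. If $B'$ is any torsion, almost torsionfree module for $(\mathcal{T},\mathcal{F})$, then by the semibrick Corollary following Theorem~\ref{thm:simpHeart} the entire collection of such modules is pairwise Hom-orthogonal. Since $B$ is now known to belong to this collection, a hypothetical $B' \not\cong B$ would force $\operatorname{Hom}(B,B') = 0$, i.e. $B' \in B^{\perp_0} = \mathcal{F}$. But $B'$ is torsion, so $B' \in \mathcal{T}$ as well, and $\mathcal{T} \cap \mathcal{F} = 0$ yields $B' = 0$, contradicting $B' \ne 0$. Hence $B' \cong B$, so $B$ is the unique torsion, almost torsionfree module for the pair.

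I expect the only genuine subtlety --- the ``main obstacle'' --- to be the extension condition (3'), where the brick hypothesis must be used to split any putative map $B \to M$ through $\pi$; conditions (1)--(2) and the entire uniqueness argument then reduce to the elementary fact that a nonzero endomorphism of a brick is invertible, combined with the orthogonality packaged into the semibrick Corollary. A smaller point to get right is the identification $\mathbf{T}(B)^{\perp_0} = B^{\perp_0}$, since it is precisely this that makes ``$B' \in \mathcal{F}$'' equivalent to ``$\operatorname{Hom}(B,B') = 0$'' in the final contradiction.
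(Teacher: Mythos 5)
Your proposal is correct and follows essentially the same route as the paper's own proof: verify conditions (1), (2), and (3') directly from the brick hypothesis (with the same splitting argument via an invertible composite $B \to M \to B$), then deduce uniqueness from the pairwise orthogonality of torsion, almost torsionfree modules, which forces any other such module to be simultaneously torsion and torsionfree. Your version merely spells out details the paper leaves implicit (the identification $\mathbf{T}(B)^{\perp_0} = B^{\perp_0}$ and the explicit appeal to the semibrick corollary), so there is nothing to correct.
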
 
 
 \begin{proof}
 We check the three conditions dual to those in Definition \ref{de:TtF}:
 
 (1) $ B \in \mathbf{T}(B) $ by definition.
 
 (2) Since $ B $ is a brick, for every proper submodule $ M $ of $ B $ we must have $ M \in B^{\perp_0} $, that is $ M $ is torsionfree.
  
 (3') Consider a short exact sequence:
  \[
  \begin{tikzcd}
  0 \ar[r] & F \arrow[r] & M \arrow[r,"f"] & B \ar[r] & 0
  \end{tikzcd}
  \]
  with $ F \in B^{\perp_0} $. If $ M \not \in B^{\perp_0} $, let $ 0 \ne g : B \to M $. Since $ F $ is torsionfree, $ g $ can not factor through $ F $, in particular $ f \circ g \ne 0 $. 
  
  Since $ B $ is a brick this endomorphism must be invertible, which means that the sequence splits. 
  This proves that $ B $ is torsion, almost torsionfree. 
  
  Any other torsion, almost torsionfree module $ S $, if not isomorphic to $ B $, would be orthogonal to it, in particular torsionfree. This is a contradiction, yielding uniqueness (up to isomorphism).
 \end{proof}

\begin{lem}
Let $ \Lambda $ be a $ \tau-$tilting finite algebra. Then every brick in $ \Lambda-\mathrm{Mod} $ is finite dimensional.
\end{lem}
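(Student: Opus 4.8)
The plan is to produce, for an arbitrary brick $B \in \Lambda\text{-Mod}$, a canonical torsion pair in $\Lambda\text{-Mod}$ for which $B$ is visibly a torsion, almost torsionfree module, and then to force that torsion pair to be definable so that Corollary \ref{cor:finDimTtF-TfT} applies. The natural candidate is the torsion pair $(\mathbf{T}(B), B^{\perp_0})$ cogenerated by $B$: by Lemma \ref{lem:tTFinGen}, $B$ is precisely the unique torsion, almost torsionfree module for this pair, so once I know the pair has definable classes, Lemma \ref{lem:finTfT} (packaged as Corollary \ref{cor:finDimTtF-TfT}) immediately yields that $B$ is finite dimensional.

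First I would restrict $(\mathbf{T}(B), B^{\perp_0})$ to $\Lambda\text{-mod}$, setting $\mathbf{t} := \mathbf{T}(B) \cap \Lambda\text{-mod}$ and $\mathbf{f} := B^{\perp_0} \cap \Lambda\text{-mod}$. Since $\Lambda$ is noetherian, the canonical torsion sequence of a finitely generated module has finitely generated terms, so $(\mathbf{t}, \mathbf{f})$ is genuinely a torsion pair in $\Lambda\text{-mod}$ and $(\mathbf{T}(B), B^{\perp_0})$ extends it in the sense of the terminology fixed earlier.

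The key step, and the place where $\tau$-tilting finiteness is used, is the observation that $\mathbf{t}$, being a torsion class in $\Lambda\text{-mod}$, is functorially finite by (TF3). Proposition \ref{prop:ffUniqueExt} then tells me two things at once: the extension of $(\mathbf{t}, \mathbf{f})$ to $\Lambda\text{-Mod}$ is unique, and it is definable on both sides. Since $(\mathbf{T}(B), B^{\perp_0})$ is an extension of $(\mathbf{t}, \mathbf{f})$, uniqueness forces it to coincide with this definable extension; in particular $B^{\perp_0} \in \mathbf{Cosilt}(\Lambda)$. Applying Lemma \ref{lem:finTfT}, all torsion, almost torsionfree modules for this pair are finite dimensional, and since $B$ is one of them, we conclude that $B$ is finite dimensional.

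I expect the main subtlety to be conceptual rather than computational: a priori one cannot assert that $B^{\perp_0}$ is definable, because $B$ is not yet known to be finite dimensional and Example \ref{ex:definable}(1) only covers finitely generated modules. The whole argument hinges on bypassing this, obtaining definability of $(\mathbf{T}(B), B^{\perp_0})$ \emph{indirectly} through functorial finiteness of the restricted torsion class together with the uniqueness of extensions in Proposition \ref{prop:ffUniqueExt}, rather than checking it directly. The only routine point that still needs care is verifying that restriction to $\Lambda\text{-mod}$ really yields a torsion pair, which is exactly where noetherianity of $\Lambda$ enters.
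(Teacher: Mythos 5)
Your proposal is correct and follows essentially the same route as the paper: it identifies $B$ as the torsion, almost torsionfree module for $(\mathbf{T}(B), B^{\perp_0})$ via Lemma \ref{lem:tTFinGen}, uses $\tau$-tilting finiteness (TF3) to get functorial finiteness of the restricted pair, and then invokes the uniqueness-of-extension statement of Proposition \ref{prop:ffUniqueExt} together with Corollary \ref{cor:finDimTtF-TfT} to conclude. The only difference is that you spell out the routine verification that the restriction to $\Lambda$-mod is a torsion pair, which the paper leaves implicit.
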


\begin{proof}
Let $ B $ be a brick. By Lemma \ref{lem:tTFinGen}, the module $ B $ is torsion, almost torsionfree with respect to $  ( \mathbf{T}(B), B^{\perp_0}) $.

The restriction of $ ( \mathbf{T}(B), B^{\perp_0}) $ to $ \Lambda-\mathrm{mod} $ is necessarily functorially finite, as all torsion classes in $ \Lambda-\mathrm{mod} $ are functorially finite by hypothesis. 

By Proposition \ref{prop:ffUniqueExt},  $ ( \mathbf{T}(B), B^{\perp_0}) $ is the unique extension of the functorially finite torsion pair obtained above.

Whence, by Corollary \ref{cor:finDimTtF-TfT}, all the torsion, almost torsionfree and torsionfree, almost torsion modules for $  ( \mathbf{T}(B), B^{\perp_0}) $ are finite dimensional. 

In particular the brick $ B $ is finite dimensional.   
\end{proof}

\begin{lem}
Let $ \Lambda $ be a $ \tau-$tilting infinite algebra. Then there exists some infinite dimensional brick in $ \Lambda-\mathrm{Mod} $.
\end{lem}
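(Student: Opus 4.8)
The plan is to combine the existence of a locally maximal torsion class (Corollary \ref{cor:locMaxExists}) with the existence of torsionfree, almost torsion modules for definable torsionfree classes (Corollary \ref{cor:TfExist}), and then to argue that the module so produced cannot be finite dimensional. Concretely, since $\Lambda$ is $\tau$-tilting infinite, Corollary \ref{cor:locMaxExists} supplies a maximal non functorially finite torsion class $\mathbf{t} \in \mathbf{tors}(\Lambda)$, which in particular is locally maximal: no torsion class covers it. I would then pass to the large module category by taking the extension of $(\mathbf{t},\mathbf{f})$ with definable torsionfree class, namely $(\mathcal{T},\mathcal{F}) = (\mathrm{Gen}(\mathbf{t}), \mathbf{t}^{\perp_0})$ as in Theorem \ref{thm:CB-bij}, so that $\mathcal{F} \in \mathbf{Cosilt}(\Lambda)$.

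Next I would record that $\mathcal{F} \ne 0$. Indeed $\mathbf{t}$ is non functorially finite, hence a proper torsion class, so $\mathbf{f} \ne 0$ and therefore $\mathcal{F} = \underrightarrow{\lim}\,\mathbf{f} \ne 0$. Applying Corollary \ref{cor:TfExist} to the nonzero definable torsionfree class $\mathcal{F}$ yields a torsionfree, almost torsion module $B$ for the torsion pair $(\mathcal{T},\mathcal{F})$. By the corollary following Theorem \ref{thm:simpHeart}, the torsionfree, almost torsion modules for any torsion pair form a semibrick; in particular $\operatorname{End}(B)$ is a division ring, so $B$ is a brick. It remains only to verify that $B$ is infinite dimensional.

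The crux of the argument, and the step I expect to carry the real content, is the dimension dichotomy. If $B$ were finite dimensional, then by Proposition \ref{prop:finInf} it would be precisely a minimal extending module for the torsion pair $(\mathbf{t},\mathbf{f})$ in $\Lambda-\mathrm{mod}$. But Theorem \ref{thm:minExt} places minimal extending modules for $\mathbf{t}$ in bijection with the torsion classes covering $\mathbf{t}$, and by construction $\mathbf{t}$ is locally maximal, so it admits no covering class and hence no minimal extending module. This contradiction forces $B$ to be infinite dimensional, and $B$ is the desired infinite dimensional brick. The only point requiring care here is that the extension chosen in the first paragraph is exactly the one whose finitely generated torsionfree, almost torsion modules are the minimal extending modules of $(\mathbf{t},\mathbf{f})$, which is guaranteed precisely because we used the extension with definable torsionfree class matching the hypotheses of Proposition \ref{prop:finInf}.
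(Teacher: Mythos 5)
Your proof is correct and follows essentially the same route as the paper's: locally maximal non functorially finite class via Corollary \ref{cor:locMaxExists}, the definable extension, existence of a torsionfree, almost torsion module via Corollary \ref{cor:TfExist}, and the contradiction with Theorem \ref{thm:minExt} through Proposition \ref{prop:finInf}. In fact you fill in two details the paper leaves implicit, namely that $\mathcal{F} \neq 0$ (needed to invoke Corollary \ref{cor:TfExist}) and that the resulting module is a brick (via the semibrick corollary to Theorem \ref{thm:simpHeart}).
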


\begin{proof}
Apply Corollary \ref{cor:locMaxExists} to obtain a locally maximal non functorially finite torsion class $ \mathbf{t} $ in $ \Lambda-\mathrm{mod}$.

By Theorem \ref{thm:minExt}, the corresponding torsion pair $ ( \mathbf{t},\mathbf{f} ) $ has no minimal extending modules. 

Consider now the definable torsion pair extending it, 
$$ ( \mathcal{T} = \underrightarrow{\lim}\, \mathbf{ t }, \mathcal{F} = \underrightarrow{\lim}\, \mathbf{ f } ). $$ By Corollary \ref{cor:TfExist}, there is some torsionfree, almost torsion module $ B $ for this torsion pair.

If $ B $ were finite dimensional, by Proposition \ref{lem:finTfT}, it would be minimal extending for the original torsion pair $ (\mathbf{t},\mathbf{f}) $ which gives a contradiction. 
\end{proof}

Combining the two lemmas we can finally obtain:

\begin{thm}
A finite dimensional algebra $ \Lambda $ is $ \tau-$tilting finite if and only if every brick over $\Lambda$ is finitely generated.
\end{thm}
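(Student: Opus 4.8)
The plan is to read the theorem off directly from the two lemmas that immediately precede it, after first recording the elementary observation that over a finite dimensional algebra a module is finitely generated if and only if it is finite dimensional. Under this identification the statement becomes ``$\Lambda$ is $\tau$-tilting finite if and only if every brick in $\Lambda-\mathrm{Mod}$ is finite dimensional'', which is exactly the form in which the two lemmas are phrased.

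I would then split the biconditional into its two implications, each supplied by one lemma. For the forward direction, assume $\Lambda$ is $\tau$-tilting finite: the first lemma guarantees that every brick in $\Lambda-\mathrm{Mod}$ is finite dimensional, hence finitely generated. For the converse I would argue by contraposition. If $\Lambda$ is $\tau$-tilting infinite, the second lemma exhibits an infinite dimensional brick, which is not finitely generated; equivalently, if every brick is finitely generated then $\Lambda$ cannot be $\tau$-tilting infinite, and so it is $\tau$-tilting finite. This completes the argument.

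No genuine obstacle arises at the level of the theorem, since all the content has been absorbed into the two lemmas; the proof is purely a matter of assembling them. The substantive difficulty lies entirely behind the second lemma, and I would flag it as the real heart of the matter: for a $\tau$-tilting infinite algebra one first produces, via Corollary \ref{cor:locMaxExists}, a non functorially finite locally maximal torsion class $\mathbf{t}$; by Theorem \ref{thm:minExt} the associated torsion pair $(\mathbf{t},\mathbf{f})$ has no minimal extending modules. Passing to the definable extension $(\mathcal{T},\mathcal{F})$ and invoking Corollary \ref{cor:TfExist} yields a torsionfree, almost torsion module $B$, which must be infinite dimensional: a finite dimensional such module would be minimal extending for $(\mathbf{t},\mathbf{f})$ by Proposition \ref{prop:finInf}, contradicting local maximality. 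That $B$ is in fact a brick is then delivered by the semibrick property of torsionfree, almost torsion modules.
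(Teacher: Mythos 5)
Your proposal is correct and follows exactly the paper's route: the theorem is obtained purely by combining the two preceding lemmas, with the forward direction given by the first lemma and the converse, via contraposition, by the second. Your sketch of the argument behind the second lemma also matches the paper's proof of that lemma, and is in fact slightly more careful, since you explicitly invoke the semibrick property to justify that the torsionfree, almost torsion module is a brick (a point the paper leaves implicit) and you cite Proposition \ref{prop:finInf} for the finite dimensional case, where the paper's text points instead to Lemma \ref{lem:finTfT}.
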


\section*{Acknowledgments}

The author would like to thank his supervisor Lidia Angeleri H\"{u}gel for her guidance and support, and Rosanna Laking for many hours of interesting discussion on cosilting theory and related topics. 

This paper is part of the author's PhD work, funded by the Universities of Trento and Verona.

\end{document}